\newcommand{\tup}[1]{\left\langle#1\right\rangle}
\DeclareMathOperator{\acc}{acc}
\DeclareMathOperator{\cf}{cf}
\DeclareMathOperator{\dom}{dom}
\DeclareMathOperator{\stem}{stem}
\newcommand{\Coll}{{\rm Coll}}
\newcommand{\cof}{{\rm cof}}
\newcommand{\ZFC}{{\rm ZFC }}
\newcommand{\force}{\Vdash}
\newcommand{\restr}{\upharpoonright}
\newtheorem{theorem}{Theorem}[section]
\newtheorem{lemma}[theorem]{Lemma}
\newtheorem{proposition}[theorem]{Proposition}
\newtheorem{coroll}[theorem]{Corollary}
\newtheorem{remark}[theorem]{Remark}
\newtheorem{definition}[theorem]{Definition}
\newtheorem{question}[theorem]{Question}
\newtheorem{notation}[theorem]{Notation}
\journal{Annals of Pure and Applied Logic}
\begin{document}

\begin{frontmatter}



\title{Square and Delta reflection}


\author[HUJ]{Laura Fontanella}
\ead{laura.fontanella@mail.huji.ac.il}
\author[HUJ]{Yair Hayut} 
\ead{yair.hayut@mail.huji.ac.il}
\address[HUJ]{Hebrew University of Jerusalem\\
Einstein Institute of Mathematics, Edmond J. Safra campus\\
Givat Ram, 91904 Jerusalem, Israel}

\begin{abstract}

Starting from infinitely many supercompact cardinals, we force a model of ZFC where $\aleph_{\omega^2+1}$ satisfies simultaneously a strong principle of reflection, called $\Delta$-reflection, and a version of the square principle, denoted $\square(\aleph_{\omega^2+1}).$ Thus we show that $\aleph_{\omega^2+1}$ can satisfy simultaneously a strong reflection principle and an anti-reflection principle.

\end{abstract}

\begin{keyword}
reflection principles, square, large cardinals, forcing.



\MSC[2010]{Primary: 03E55. Secondary: 03E35, 03E65} 



\end{keyword}

\end{frontmatter}



\section{Introduction}

One of the most fruitful research areas in set theory is the study of the so-called \emph{reflection principles}. A reflection principle is, roughly, a statement establishing that for a certain class of structures, if the structure satisfies a given property, then there is a substructure of smaller cardinality that satisfies the same property. Reflection principles can be seen as versions of Downward Lowenheim-Skolem theorem and they derive from large cardinal notions such as the notion of strongly compact cardinal.  Magidor and Shelah introduced in 1994 (see \cite{MagidorShelah}) a two-cardinals reflection principle, denoted $\Delta_{\lambda,\kappa}$ where $\lambda<\kappa$ (see the definition in Section~\ref{sec: prelim}). This is a strong reflection principle as it implies many interesting properties of structures of various kind. For instance, given a cardinal $\kappa,$ if $\Delta_{\omega_1, \kappa}$ holds, then every almost free Abelian group of size $\kappa$ is free; if $\Delta_{\lambda,\kappa}$ holds for every $\lambda<\kappa$ -- we say that $\kappa$ has the \emph{Delta reflection} -- then for every graph $G$ of size $\kappa,$ if every subgraph of smaller size has coloring number $\mu<\kappa,$ then $G$ itself has coloring number $\mu;$ similar properties hold for other kind of structures under the assumption that $\kappa$ has the Delta reflection (see \cite{Shelah} and \cite{MagidorShelah}). It should be pointed out that these statements are always true when $\kappa$ is a singular cardinal (see \cite{Shelah}), thus we are interested in those \emph{regular} cardinals that satisfy the Delta reflection. 

The Delta reflection implies another classical reflection principle, namely the \emph{stationary set reflection} which for a cardinal $\kappa$ established that for every stationary subset $S$ of $\kappa$ of points of cofinality less than $\lambda,$ there exists $\alpha<\kappa$ such that $S\cap \alpha$ is stationary in $\alpha.$ Since the stationary set reflection fails at the successor of a regular cardinal, in particular the Delta reflection can only hold at successors of singular cardinals. The Delta reflection follows from weakly compact cardinals, however Magidor and Shelah showed in \cite{MagidorShelah} that it is consistent that a small cardinal, namely $\aleph_{\omega^2+1},$ has the Delta reflection, assuming \ZFC is consistent with the existence of infinitely many supercompact cardinals. Moreover, the results proven in \cite{MagidorShelah}, combined with other older results by Eklof \cite{Eklof}, Milner and Shelah \cite{MilnerShelah}, \cite{Shelah}, imply that $\aleph_{\omega^2+1}$ is the smallest regular cardinal that can have the Delta reflection. Recent work by the first author and Magidor \cite{FontanellaMagidor} shows that $\aleph_{\omega^2+1}$ can even satisfy simultaneously the Delta reflection and the tree property, which is another strong reflection principle (more precisely, assuming \ZFC is consistent with the existence of infinitely many supercompact cardinals, then \ZFC is consistent with $\aleph_{\omega^2+1}$ having both the Delta reflection and the tree property); on the other hand, it is shown in \cite{FontanellaMagidor} that 
the Delta reflection at $\aleph_{\omega^2+1}$ does not imply the tree property at this cardinal (nor the tree property implies the Delta reflection), thus the Delta reflection and the tree property are independent.         

 In this paper we show that, assuming the consistency of infinitely many supercompact cardinals, it is possible to force a model of \ZFC where $\aleph_{\omega^2+1}$ satisfies simultaneously the Delta reflection and a version of the square, denoted $\square(\aleph_{\omega^2 +1})$ (see the definition in Section~\ref{sec: prelim}). Introduced by Todor\v{c}evi\'c in \cite{Todorcevic}, $\square(\kappa)$ is an anti-reflection principle on a cardinal $\kappa$. For example, as demonstrated in \cite{Todorcevic}, $\square(\kappa)$ implies the failure of the tree property at $\kappa.$ It also implies the failure of the simultaneous reflection, namely every stationary set can be split into two disjoint stationary parts such that there is no ordinal $\alpha < \kappa$ in which they are both stationary (see Veli\v{c}kovi\'c \cite{Boban}). Rinot \cite{Rinot} proved that an even stronger failure of the simultaneous reflection follows from $\square(\kappa),$ namely any stationary subset of $\kappa$ can be partitioned into $\kappa$ many pairwise disjoint stationary sets such that no two of them reflect simultaneously. 
It follows by a result of Solovay \cite{Solovay} that $\square(\lambda)$ fails if there is a $\lambda$-strongly compact cardinal, so in a way the existence of a $\square(\lambda)$ sequence bounds the amount of downward reflection that we can get on structures of size $\lambda$.  

$\square(\kappa^+)$ is a consequence of the well-known \emph{Jensen's square principle} $\square_\kappa$ (see Jensen \cite{Jensen}). The Delta reflection implies the failure of the weak square $\square^*_\kappa$ which is another weak consequence of $\square_\kappa$ (hence a fortiori the Delta reflection implies the failure of $\square_\kappa$), thus our result implies that one can have at $\aleph_{\omega^2+1}$ a good balance between a reflection principle and an anti-reflection principle.     

\section{Preliminaries and notation}\label{sec: prelim}

In this section we give the definition of the Delta reflection and $\square(\lambda).$ Then we prove some preliminary results about the Delta reflection that will be used in the final proof of our main theorem.  

\begin{notation} Let $\kappa<\mu$ be two regular cardinals, we denote by $E_{<\kappa}^{\mu}$ the set $\{\alpha<\mu\mid\cof(\alpha)<\kappa\}$, and we denote by $E_\kappa^\mu$ the set $\{\alpha < \mu \mid \cof(\alpha)=\kappa\}.$
\end{notation}

\begin{notation} Let $f$ be a function and $A$ be a set, we denote by $f[A]$ the set $\{f(x) \mid x\in A\cap \dom f\}.$ 
\end{notation}

Given a forcing notion $\mathbb{P}$ and two conditions 
$p, q\in \mathbb{P},$ we write $p\leq q$ when $p$ is stronger than $q.$ 
Given a cardinal $\kappa,$ we recall that 
\begin{itemize}
\item $\mathbb{P}$ is \emph{$\kappa$-closed} if every decreasing sequence of less than $\kappa$ many conditions in $\mathbb{P}$ has a lower bound;  
\item $\mathbb{P}$ is \emph{$\kappa$-directed closed} if for every set of less than $\kappa$ many pairwise compatible conditions in $\mathbb{P},$ it is possible to find a lower bound; 
\item $\mathbb{P}$ is \emph{$\kappa$-distributive} if it does not add sequences of ordinals of length less than $\kappa;$
\item $\mathbb{P}$ is \emph{$\kappa$-c.c.} if every antichain has size less than $\kappa;$
\item $\mathbb{P}$ is \emph{$\kappa$-centered} if it can be split into $\kappa$ many pieces that are centered, namely for each piece $X$ every finite subset $Y$ of $X$ has a lower bound. 
\end{itemize}

For a forcing notion $\mathbb{P}$ and a model of \ZFC, $V$, we write $V^\mathbb{P}$ for the Boolean-valued model of $\mathbb{P}$. In particular, we write $V^\mathbb{P}\models \varphi$ iff $1_\mathbb{P}\force \varphi$. 
 
\begin{definition}\label{def: Delta} Given two cardinals $\kappa<\mu,$ $\Delta_{\kappa, \mu}$ is the statement that for every cardinal $\nu<\kappa,$ for every stationary set $S\subseteq E_{<\kappa}^{\mu}$ and for every algebra $A$ on $\mu$ with $\nu$ many operations, there exists a subalgebra $A'$ of order type a regular cardinal below $\kappa$ such that $S\cap A'$ is stationary in $\sup(A').$ 
\end{definition}

We say that $\mu$ has the \emph{$\Delta$-reflection} (or \emph{Delta reflection}) if $\Delta_{\kappa,\mu}$ holds for every $\kappa<\mu.$  


The proof of the following proposition is based on an argument of Solovay who showed that given a strongly compact cardinal $\kappa,$ if $\mu>\kappa$ is any regular cardinal, then every stationary subset of $E_{<\kappa}^{\mu}$ reflects. 

\begin{proposition}\label{prop: reflection from spc}  Let $\kappa$ be a supercompact cardinal and let $\mu>\kappa.$ 
Suppose $S\subseteq E_{<\kappa}^{\mu}$ is a stationary set and $A$ is an algebra on $\mu$ with less than $\kappa$ many operations, then there exists a set $X$ such that if $B$ is the subalgebra of $A$ generated by $X,$ then:
\begin{itemize}
\item $o.t.(B)= o.t.(X)<\kappa$ is a regular cardinal
\item $S\cap B$ is stationary in $\sup(B).$   
\end{itemize}
Moreover, if $\mu= \kappa^{+\omega+1},$ then $o.t.(B)= (X\cap\kappa)^{+\omega+1}.$   \end{proposition}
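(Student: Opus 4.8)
The plan is to run Solovay's reflection argument through a supercompactness embedding and then transfer the conclusion back to $V$ by elementarity. Since $\kappa$ is supercompact, I would fix a $\mu$-supercompactness embedding $j\colon V\to M$ with critical point $\kappa$, ${}^{\mu}M\subseteq M$, and $j(\kappa)>\mu$ (take $j$ to be the ultrapower by a normal fine measure on $P_\kappa(\mu)$). By $\mu$-closure, $j\restr\mu=\langle j(\alpha):\alpha<\mu\rangle\in M$, hence $j[\mu]\in M$; set $\gamma=\sup j[\mu]$, which lies below $j(\mu)$ since $j(\mu)$ is regular in $M$ while $j[\mu]$ has order type $\mu<j(\mu)$. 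The idea is to verify, inside $M$, the $j$-image of the desired conclusion using the single witness $X'=j[\mu]$; elementarity of $j$ then yields the required $X$ in $V$.

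First I would check that $j[\mu]$ is already a subalgebra of $j(A)$. As $A$ carries fewer than $\kappa=\mathrm{crit}(j)$ operations $\langle f_i : i<\nu\rangle$ with $\nu<\kappa$, the operations of $j(A)$ are exactly $\langle j(f_i):i<\nu\rangle$, and for $\vec a$ from $\mu$ one has $j(f_i)(j(\vec a))=j(f_i(\vec a))\in j[\mu]$; so $B':=\langle j[\mu]\rangle_{j(A)}=j[\mu]$. Then $o.t.(B')=o.t.(X')=\mu$, a regular cardinal of $M$ below $j(\kappa)$, with $\sup(B')=\gamma$. Moreover $j(S)\cap B'=j(S)\cap j[\mu]=j[S]$, since $j(\alpha)\in j(S)$ iff $\alpha\in S$.

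The heart of the matter, and the step I expect to be the main obstacle, is to show that $j[S]$ is stationary in $\gamma$ as computed in $M$. Here I would use the continuous increasing map $h\colon\mu\to\gamma$ given by $h(\alpha)=\sup j[\alpha]$, which belongs to $M$ and satisfies $h(\alpha)=j(\alpha)$ whenever $\cf(\alpha)<\kappa$, by continuity of $j$ at points of small cofinality; in particular $h(\alpha)=j(\alpha)$ for every $\alpha\in S$. Given any club $C\subseteq\gamma$ in $M$, the preimage $C^{*}=h^{-1}[C]$ is club in $\mu$ in $M$, and since cofinalities up to $\mu$ are preserved between $M$ and $V$, the set $C^{*}$ is club in $\mu$ in $V$ as well. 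As $S$ is stationary in $V$, I may pick $\alpha\in S\cap C^{*}$; then $j(\alpha)=h(\alpha)\in C\cap j[S]$, witnessing $C\cap j[S]\neq\emptyset$. Thus $M\models$ ``$j[S]$ is stationary in $\sup(B')$''.

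Putting this together, $X'=j[\mu]$ witnesses in $M$ the $j$-image of the assertion ``there is $X$ whose generated subalgebra $B$ has order type a regular cardinal below $\kappa$ and $S\cap B$ is stationary in $\sup(B)$''; by elementarity of $j$ this assertion holds in $V$, producing the desired $X$ and $B$. For the final clause, when $\mu=\kappa^{+\omega+1}$ I note that $X'\cap j(\kappa)=j[\kappa]$ has order type $\kappa$, and since all cardinals up to $\mu$ agree between $M$ and $V$ we have $o.t.(B')=\mu=\kappa^{+\omega+1}=\bigl(o.t.(X'\cap j(\kappa))\bigr)^{+\omega+1}$ in $M$; transferring this first-order identity back through $j$ gives $o.t.(B)=\bigl(o.t.(X\cap\kappa)\bigr)^{+\omega+1}$, as required.
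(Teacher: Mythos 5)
Your proof is correct and takes essentially the same route as the paper's: fix a $\mu$-supercompactness embedding $j\colon V\to M$, observe that $j[\mu]$ is itself the subalgebra of $j(A)$ it generates and has order type $\mu<j(\kappa)$, verify in $M$ that $j[S]=j(S)\cap j[\mu]$ is stationary in $\sup j[\mu]$ by pulling clubs back to $\mu$, and finish (including the ``moreover'' clause, via $j[\mu]\cap j(\kappa)=\kappa$ and agreement of cardinals up to $\mu$) by elementarity. The only cosmetic difference is in the club pullback: the paper uses $j^{-1}[C]$, which is $<\kappa$-closed unbounded and meets $S\subseteq E^{\mu}_{<\kappa}$, while you use the continuous map $\alpha\mapsto\sup j[\alpha]$ to get a genuine club; both rest on the continuity of $j$ at points of cofinality below $\kappa$.
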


\begin{proof} We can suppose that for some $\chi<\kappa,$ we have $S\subseteq E_\chi^{\mu}.$ Let $j: V\to M$ be a $\mu$-supercompact embedding with critical point $\kappa.$ Suppose $\langle f_i:\ i<\nu\rangle$ are the operations of the algebra $A$ with $\nu<\kappa.$ Then $j(S)$ is a stationary subset of $E_{\chi}^{j(\mu)}$ and $j(A)$ is an algebra on $j(\mu)$ with operations 
$\langle j(f_i):\ i<\nu\rangle.$ We consider the subalgebra $B^*$ of $j(A)$ generated by $j[\mu],$ note that by the closure of $M$ we have $B^*\in M.$ The domain of $B^*$ is precisely $j[\mu]$ for if $j(f_i)$ is $n$-ary and $j(\alpha_1), \ldots, j(\alpha_n)$ are ordinals in $j[\mu],$ then 
$j(f_i)(j(\alpha_1), \ldots, j(\alpha_n))= j(f_i(\alpha_1,\ldots, \alpha_n))= j(\eta)$ for some $\eta\in \mu.$ It follows that the order type of $B^*$ is $\mu<j(\kappa).$
We claim that $j(S)\cap B^*$ is stationary in $\sigma^*:=\sup j[\mu]$ indeed, if $C^*\subseteq \sigma^*$ is a club, then $C:= j^{-1}[C^*]$ is a $<\kappa$-closed unbounded subset of $\mu.$ Since $S$ is a stationary subset of $E_{<\kappa}^{\mu}$, we can find $\zeta\in C\cap S.$ It follows that $j(\zeta)\in j(S)\cap C^*$ thus $j(S)\cap B^*$ has non empty intersection with every club of $\sigma^*.$ 

We proved 
$$\begin{array}{rl}
M\models & \exists X \textrm{ of order type a regular cardinal $<j(\kappa)$ such that}\\
& \textrm{$X$ generates a subalgebra $B$ of $j(A)$ such that $o.t.(B)= o.t.(X)$}\\
& \textrm{and $j(S)\cap B$ is stationary in $\sup(B).$} 
\end{array}
$$

By elementarity, we have 
$$\begin{array}{rl}
V\models & \exists X \textrm{of order type a regular cardinal $<\kappa$ such that}\\
& \textrm{$X$ generates a subalgebra $B$ of $A$ such that $o.t.(B)= o.t.(X)$}\\
& \textrm{and $S\cap B$ is stationary in $\sup(B).$} 
\end{array}
$$

Moreover, if $\mu=\kappa^{+\omega+1}=(\kappa^{+\omega+1})^M,$ then $M$ satisfies $(j[\mu]\cap j(\kappa))= \kappa$ and $o.t.(B^*)= \mu= \kappa^{+\omega+1}.$
It follows, by elementarity, that in $V$ we have a subalgebra $B$ as above such that in addition $o.t.(B)= (X\cap \kappa)^{+\omega+1}.$ That completes the proof of the proposition. 
\end{proof}

\begin{coroll}\label{coroll: Delta reflection at ZFC} Assume $\lambda$ is a singular limit of supercompact cardinals $\langle \kappa_n:\ n<\omega\rangle,$ then $\Delta_{\lambda,\lambda^+}$ holds.  
\end{coroll}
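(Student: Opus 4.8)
The plan is to reduce $\Delta_{\lambda,\lambda^+}$ to a single instance of Proposition~\ref{prop: reflection from spc} applied at one of the supercompact cardinals $\kappa_n$. So fix a cardinal $\nu<\lambda$, a stationary set $S\subseteq E_{<\lambda}^{\lambda^+}$, and an algebra $A$ on $\lambda^+$ with $\nu$ many operations; I must produce a subalgebra $A'$ of $A$ whose order type is a regular cardinal below $\lambda$ and such that $S\cap A'$ is stationary in $\sup(A')$. The whole difficulty is that Proposition~\ref{prop: reflection from spc} requires the stationary set to concentrate on points of cofinality below a \emph{single} supercompact cardinal, whereas a priori $S$ only concentrates below $\lambda$; the bulk of the argument is to locate a good $\kappa_n$.

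The key observation is that, since $\cf(\lambda)=\omega$ and $\lambda=\sup_{n}\kappa_n$, every ordinal of cofinality $<\lambda$ has cofinality $<\kappa_n$ for some $n$, so that $E_{<\lambda}^{\lambda^+}=\bigcup_{n<\omega}E_{<\kappa_n}^{\lambda^+}$ is an increasing union. Consequently $S=\bigcup_{n<\omega}\bigl(S\cap E_{<\kappa_n}^{\lambda^+}\bigr)$. Now I would invoke the fact that the nonstationary ideal on the regular uncountable cardinal $\lambda^+$ is $\sigma$-complete (indeed $\lambda^+$-complete), so a countable union of nonstationary sets is nonstationary. Since $S$ is stationary, at least one piece $S\cap E_{<\kappa_{n}}^{\lambda^+}$ must be stationary, and because the pieces increase with $n$, it remains stationary for all larger $n$. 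As $\kappa_n\to\lambda>\nu$ we also have $\nu<\kappa_n$ for all large $n$, so I can fix a single $n$ with $\nu<\kappa_n$ and $S_n:=S\cap E_{<\kappa_n}^{\lambda^+}$ stationary.

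With such an $n$ in hand, $\kappa_n$ is supercompact, $\lambda^+>\kappa_n$, $S_n\subseteq E_{<\kappa_n}^{\lambda^+}$ is stationary, and $A$ is an algebra on $\lambda^+$ with $\nu<\kappa_n$ many operations, so Proposition~\ref{prop: reflection from spc} applies (only its main clause is needed; the ``moreover'' clause concerning $\kappa^{+\omega+1}$ is irrelevant here). It yields a set $X$ generating a subalgebra $B$ of $A$ with $o.t.(B)=o.t.(X)<\kappa_n<\lambda$ a regular cardinal, and with $S_n\cap B$ stationary in $\sup(B)$. Since $S_n\subseteq S$, the set $S\cap B\supseteq S_n\cap B$ is then also stationary in $\sup(B)$, and $B$ has order type a regular cardinal below $\lambda$, so $B$ is the desired subalgebra. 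As $\nu$, $S$, and $A$ were arbitrary, this establishes $\Delta_{\lambda,\lambda^+}$. The only genuinely nonmechanical point is the passage to a stationary piece $S_n$, which rests on the $\sigma$-completeness of the club filter on $\lambda^+$; the remainder is a direct appeal to Proposition~\ref{prop: reflection from spc}.
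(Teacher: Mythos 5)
Your proposal is correct and follows essentially the same route as the paper: both decompose $S$ by cofinality (using the completeness of the nonstationary ideal on $\lambda^+$ to find a single $\kappa_n$ with $\nu<\kappa_n$ and $S\cap E_{<\kappa_n}^{\lambda^+}$ stationary) and then apply Proposition~\ref{prop: reflection from spc} at that $\kappa_n$, noting that reflection of the stationary subset $S_n\subseteq S$ gives reflection of $S$. Your write-up merely makes explicit the $\sigma$-completeness argument that the paper leaves implicit.
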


\begin{proof} Let $S$ be a stationary subset of $\lambda^+$ and let $A$ be an algebra on $\lambda^+$ as in the statement of $\Delta_{\lambda,\lambda^+}.$ For some $n<\omega,$ the set $S^*:= \{\alpha\in S;\ cof(\alpha)<\kappa_n\}$ is stationary and $A$ has less than $\kappa_n$ many operations. Using the supercompactness of $\kappa_n,$ we can apply Proposition \ref{prop: reflection from spc}, so $S^*$ reflects on a subalgebra of $A$ of order type a regular cardinal below $\lambda^+$, and since $S^*\subseteq S$, clearly $S$ reflects at this subalgebra as well. \end{proof}

\begin{definition}
Let $\kappa$ be a regular cardinal. 

$\square(\kappa)$ holds if there is a sequence $\mathcal{C} = \tup{C_\alpha \mid \alpha < \kappa}$ such that:
\begin{enumerate}
\item For every $\alpha < \kappa$, $C_\alpha \subseteq \alpha$ is closed and unbounded. If $\alpha$ is a successor ordinal we will assume that $C_\alpha = \{\alpha - 1\}.$
\item For every $\beta \in \acc C_\alpha$, $C_\alpha \cap \beta = C_\beta$. 
\item There is no thread, i.e. there is no club $D \subseteq \kappa$ such that for every $\alpha \in \acc D,$ $D \cap \alpha = C_\alpha$. 
\end{enumerate} 
$\mathcal{C}$ is called a square sequence.
\end{definition}

This definition, due to Todor\v{c}evi\'c \cite{Todorcevic}, is a second order variant of Jensen's square, $\square_\kappa$. It is an anti-reflection principle, as $\kappa$ is the first ordinal in which there is no candidate for $C_\kappa$.
Note that a $\square(\kappa)$-sequence is a sequence of length $\kappa,$ unlike $\square_\kappa$-sequences which are sequences of length $\kappa^+.$

Jensen proved that in $L$ and other core models, $\square(\kappa)$ holds if and only if $\kappa$ is not weakly compact (see \cite{Jensen}). Conversely, if $\kappa$ is a regular cardinal in which $\neg \square(\kappa)$ holds then $\kappa$ is weakly compact in $L$ (see \cite[Prop. 6.1]{ShelahStanley}). Also, by a result of Solovay $\square(\kappa)$ fails for every $\kappa$ above a strongly compact cardinal. 

\section{Definition of the model}\label{sec: definition of the model}

The result we want to prove is the following. 

\begin{theorem}\label{thm: main theorem} Assume that \ZFC is consistent with the existence of infinitely many supercompact cardinals, then \ZFC is consistent with $\Delta_{\aleph_{\omega^2}, \aleph_{\omega^2+1}}$ plus $\square(\aleph_{\omega^2+1}).$
\end{theorem}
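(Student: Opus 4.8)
The plan is to add a $\square(\aleph_{\omega^2+1})$-sequence on top of a model of the Delta reflection produced by the Magidor--Shelah method, and then to show that the square forcing is tame enough to preserve the reflection. Write $\mu := \aleph_{\omega^2+1}$. First I would set up the ground model $V$ as in \cite{MagidorShelah} and \cite{FontanellaMagidor}: fix supercompacts $\langle \kappa_n : n<\omega\rangle$ with $\lambda=\sup_n \kappa_n$, make the supercompactness of each $\kappa_n$ indestructible under $\kappa_n$-directed closed forcing by a Laver-style preparation, and then force with the Levy-collapse iteration $\Po$ that turns $\lambda$ into $\aleph_{\omega^2}$ (e.g.\ sending $\kappa_n$ to $\aleph_{\omega\cdot n+1}$ and keeping the $\kappa_n$ cofinal), so that $\lambda^+=\mu=\aleph_{\omega^2+1}$ in $V^{\Po}$. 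Here the $\kappa_n$ become small cardinals, so $\Delta_{\aleph_{\omega^2},\mu}$ in $V^{\Po}$ is not witnessed by surviving supercompactness; rather, as in \cite{MagidorShelah}, one reflects a given stationary set by lifting a highly supercompact ground embedding $j\colon V\to M$ with $\mathrm{crit}(j)=\kappa_n$ through the whole collapse $\Po$ and then applying Proposition~\ref{prop: reflection from spc} inside the lifted model. By Corollary~\ref{coroll: Delta reflection at ZFC} and this lifting, $\Delta_{\aleph_{\omega^2},\mu}$ holds in $V^{\Po}$.

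Next, over $V^{\Po}$ I would force $\square(\mu)$ with the canonical forcing $\mathbb{S}$ whose conditions are coherent partial square sequences $\langle C_\alpha : \alpha\le\gamma\rangle$ with $\gamma<\mu$, ordered by end-extension. Any set of pairwise compatible conditions is a chain, and chains of length $<\mu$ have lower bounds obtained by threading the common part and appending a top club, so $\mathbb{S}$ is $\mu$-directed closed and in particular $\mu$-distributive; assuming $2^{<\mu}=\mu$ in $V^{\Po}$ it is also $\mu^+$-c.c. Hence $\mathbb{S}$ preserves all cardinals and cofinalities, $\mu$ is still $\aleph_{\omega^2+1}$ in $V^{\Po*\mathbb{S}}$, and the generic object $\vec{C}$ is a $\square(\mu)$-sequence (the nonexistence of a thread, clause~(3) of the definition, follows from a standard density argument).

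The main work, and the \emph{main obstacle}, is to preserve $\Delta_{\aleph_{\omega^2},\mu}$ into $V^{\Po*\mathbb{S}}$. Since $\mathbb{S}$ is $\mu$-distributive it adds no new bounded subsets of $\mu$, so any stationary $S\subseteq E^\mu_{<\aleph_{\omega^2}}$ and any algebra $A$ on $\mu$ in the extension concentrate (for $S$) on a fixed $E^\mu_\chi$ and use fewer than $\kappa_n$ operations for a common $n$ with $\chi<\kappa_n$. It then suffices to reflect $S$ using $\kappa_n$. I would take the lift $\hat\jmath\colon V^{\Po}\to M^{*}$ of the $\kappa_n$-supercompact embedding already obtained in the Magidor--Shelah argument and lift it once more through $\mathbb{S}$. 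For this I build an $M^{*}$-generic for $\hat\jmath(\mathbb{S})=\mathbb{S}^{M^{*}}(\hat\jmath(\mu))$ together with a master condition below $\hat\jmath[H]$: the union of $\langle \hat\jmath(\vec{C}\restr\alpha) : \alpha<\mu\rangle$ is a coherent sequence of length $\sup\hat\jmath[\mu]<\hat\jmath(\mu)$, and I append a top club $C_{\sup\hat\jmath[\mu]}$ equal to the closure of $\hat\jmath[\mu]$, which coheres with the earlier entries because $\hat\jmath$ is continuous at the relevant points and $\vec{C}$ is coherent below $\mu$. The $\mu$-closure of $M^{*}$ places this master condition in the target model.

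The crucial point is that this master condition is a \emph{legitimate} condition of $\hat\jmath(\mathbb{S})$ precisely because $\square(\hat\jmath(\mu))$ forbids only a thread reaching all the way up to $\hat\jmath(\mu)$, not one stopping at $\sup\hat\jmath[\mu]$; thus adding a $\square(\mu)$-sequence is exactly compatible with the embedding that yields Delta reflection. Once $\hat\jmath$ is lifted to $\hat\jmath\colon V^{\Po*\mathbb{S}}\to M^{**}$, I run the argument of Proposition~\ref{prop: reflection from spc} on $\hat\jmath(S)$ and $\hat\jmath(A)$: the subalgebra generated by $\hat\jmath[\mu]$ has order type $\mu$ a regular cardinal below $\hat\jmath(\kappa_n)$, $\hat\jmath(S)$ is stationary in its sup, and pulling back by elementarity gives the desired reflecting subalgebra for $S$ and $A$ in $V^{\Po*\mathbb{S}}$. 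Verifying the coherence of the master condition and the compatibility of this final lift with the already delicate lift through $\Po$ is the technical heart of the construction; everything else is bookkeeping on cardinals and closure.
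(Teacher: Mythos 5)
Your proposal breaks down at what you yourself call the crucial point: the master condition for lifting $\hat\jmath$ through $\mathbb{S}$ does not exist, and the specific coherence claim you make is false. Let $\vec{C}=\langle C_\alpha \mid \alpha<\mu\rangle$ be the generic square sequence, let $\hat\jmath$ be the lifted $\mu$-supercompact embedding with critical point $\kappa_n$, and let $C^*$ be your proposed top club, the closure of $\hat\jmath[\mu]$. Being a condition of $\hat\jmath(\mathbb{S})$ requires the coherence clause at \emph{every} accumulation point of $C^*$, and among these are the ordinals $\beta=\sup\hat\jmath[\alpha]$ for $\alpha<\mu$ with $\cf(\alpha)\geq\kappa_n$ --- points at which $\hat\jmath$ is \emph{not} continuous, so $\beta<\hat\jmath(\alpha)$. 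Since $\hat\jmath[C_\alpha]$ is cofinal in $\beta$, we have $\beta\in\acc\bigl(\hat\jmath(C_\alpha)\bigr)$, and the master condition must extend the condition $\hat\jmath\bigl(\vec{C}\restr(\alpha+1)\bigr)$, whose coherence forces the $\beta$-th entry to be $D_\beta=\hat\jmath(C_\alpha)\cap\beta$. But $C^*\cap\beta\supseteq\hat\jmath[\alpha]$ contains $\hat\jmath(\xi)$ for \emph{every} $\xi<\alpha$, whereas $\hat\jmath(C_\alpha)$ contains $\hat\jmath(\xi)$ only for $\xi\in C_\alpha$; since $C_\alpha\neq\alpha$ (any square sequence has $C_{\beta_0}\neq\beta_0$ somewhere, and coherence propagates this upward), coherence fails at $\beta$. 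So your appeal to ``continuity at the relevant points'' is exactly wrong: the relevant points are the discontinuity points, and they cannot be avoided.

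Moreover, no cleverer choice of top club can repair this, and this is not a technicality but Solovay's theorem (quoted in the paper) in embedding form. If $C^*$ were \emph{any} club in $\sup\hat\jmath[\mu]$ cohering with the entries $D_\beta$, then the set $A$ of $\alpha<\mu$ of countable cofinality with $\hat\jmath(\alpha)\in\acc(C^*)$ is unbounded, and for $\alpha<\alpha'$ in $A$ coherence gives $\hat\jmath(C_\alpha)=C^*\cap\hat\jmath(\alpha)=\hat\jmath(C_{\alpha'})\cap\hat\jmath(\alpha)$, hence $C_\alpha=C_{\alpha'}\cap\alpha$; then $\bigcup_{\alpha\in A}C_\alpha$ is a thread of $\vec{C}$, and by the $\mu$-closure of the target model it lies in $V^{\Po\ast\mathbb{S}}$ --- contradicting the very fact that $\mathbb{S}$ forces $\square(\mu)$. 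In short, an embedding witnessing reflection at $\mu$ can never be lifted through the square forcing, which is precisely the obstacle the paper is organized around. The paper's solution is structurally different: it forces the thread $\mathbb{T}$ \emph{explicitly}, so that $\mathbb{S}\ast\mathbb{R}\ast\mathbb{T}$ has a $\mu$-directed closed dense subset and indestructibility restores supercompactness (hence Delta reflection) in the $\mathbb{T}$-extension; all the real work then goes into pulling the reflection back down to the model where $\vec{C}$ has no thread, via the preparatory iteration $\mathbb{R}$ (which makes $\mathbb{T}$ generically preserve stationary sets), the homogeneity lemmas, and Lemma~\ref{lem: avoiding T}. Your proposal contains no substitute for this pull-back mechanism, and without one the theorem does not follow.
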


In this section we define the forcing construction that will give us a model of 
$\Delta_{\aleph_{\omega^2}, \aleph_{\omega^2+1}}$ plus $\square(\aleph_{\omega^2+1}).$ 

Let $\lambda:= \sup_{n<\omega} \kappa_n$ where $\langle \kappa_n \mid n<\omega \rangle$ is an increasing sequence of supercompact cardinals. Using a construction by Laver \cite{Laver}, we can assume that each $\kappa_n$ is indestructibly supercompact, i.e. if $\mathbb{A}$ is a $\kappa_n$-directed closed forcing, then $V^{\mathbb{A}}\models \kappa_n$ is supercompact. We also assume that $2^{\kappa_n}=\kappa_n^+$ holds for every $n<\omega.$ 

By Corollary~\ref{coroll: Delta reflection at ZFC}, $\Delta_{\lambda^+, \lambda}$ holds. We want to collapse the cardinals below $\lambda^+$ in a way that will give us $\lambda = \aleph_{\omega^2}$, $\lambda^{+} = \aleph_{\omega^2 + 1}$ and still $\Delta_{\lambda^+, \lambda}$ holds. This is done by the construction due to Magidor and Shelah. We want also to have $\square(\lambda^+)$ in this model. This is more problematic, as if $\square(\lambda^+)$ holds in $V$ then there are no $\lambda^+$-supercompact cardinals. The trick, which is quite standard, 
is to force a generic square sequence at $\lambda^+$, then use the fact that, after we further force a thread for this square sequence, the iteration is $\lambda^+$-directed closed. Therefore we restore the supercompactness of any indestructible $\lambda^+$-supercompact from the ground model. In particular, the threading forcing restores the Delta principle at singular limits of supercompact cardinals. So, in order to get the Delta reflection together with the square principle, we must argue that if, by contradiction, the Delta reflection fails before we force the thread, then the counterexample would be preserved by the threading forcing.  

There are two difficulties when trying to work in this direction. The first problem is the following: although $\Delta_{\lambda^+, \lambda}$ is indestructible under $\lambda^+$-directed closed forcing (because $\lambda$ is a singular limit of indestructible supercompact cardinals), we do not know if such indestructibility can hold for 
$\Delta_{\aleph_{\omega^2 + 1}, \aleph_{\omega^2}}$. 
Second, the forcing that adds a thread to a $\square(\lambda^+)$ sequence always destroys stationary sets. This is problematic since it may happen that before we force the thread, the Delta reflection fails, and after the threading forcing, all the ``bad'' stationary sets that do not reflect are destroyed. In other words, the threading forcing may resurrect the Delta reflection. 

The approach that we suggest in order to get around the second problem is to force a square sequence and then define a preparatory forcing iteration. Such an iteration destroys the bad stationary sets in a way that will make the threading forcing generically preserve stationary sets. Namely, we can find for every stationary set a generic filter for the threading forcing such that in the generic extension the set remains stationary. In this way, if a given stationary set was a counterexample for the Delta reflection principle, then by using the distributivity properties of the threading forcing, we will get that there is a generic filter for the threading forcing such that in the generic extension the stationary set remains a counterexample for the Delta reflection. Therefore, it will be enough to show that the Delta principle holds after the threading forcing. 


We are now ready to introduce some of the forcing notions that will determine our final model. 

\begin{definition} For every $n<\omega,$ we let $\mathbb{C}_n$ be the product forcing $$\prod_{m\geq n} \Coll(\kappa_m^{++}, <\kappa_{m+1})$$ (with full support). 
\end{definition}

\begin{definition} We define on $\mathbb{C}_0$ an equivalence relation $\sim:$  
$$c\sim c'\iff \exists n \forall m\geq n\ (c(m)=c'(m)).$$
Given a condition $c\in \mathbb{C}_0,$ we denote by $[c]_\sim$ the equivalence class of $c.$ We let $\mathbb{C}_{fin}$ be the forcing whose conditions are the equivalence classes of $\mathbb{C}_0$ ordered by eventual domination, namely 
$$[c]_{\sim}\leq [d]_\sim\iff \exists n\forall m\geq n\ c(m)\leq d(m)$$
\end{definition}

\begin{definition}\label{def: definition of S} $\mathbb{S}$ is the forcing that adds a $\square(\lambda^+)$-sequence with bounded approximations, namely a condition in $\mathbb{S}$ is a sequence of the form 
$\langle C_\alpha;\ \alpha\in \gamma+1\rangle$ where $\gamma<\lambda^+$ and for every $\alpha,$ $C_\alpha\subseteq \alpha$ is a club (if $\alpha$ is a successor ordinal, then $C_\alpha= \{\alpha-1\}$); for every $\alpha,\beta,$ if $\beta\in acc(C_\alpha),$ then $C_\alpha\cap \beta= C_\beta.$ 
Given two conditions $s, t,$ we say that $s$ is stronger than $t$ if $t\sqsubseteq s.$    
\end{definition}

\begin{definition}\label{def: definition of T} Let $\mathcal{C}\in V^{\mathbb{S}}$ be the generic square sequence added by $\mathbb{S}.$ We let $\mathbb{T}\in V^{\mathbb{S}}$ be the forcing that adds a thread to $\mathcal{C},$ namely a conditions in $\mathbb{T}$ is a set $C_\alpha$ in $\mathcal{C}$ where $\alpha$ is a limit ordinal and the order on $\mathbb{T}$ is defined by $p\leq q$ if and only if $q\sqsubseteq p.$
\end{definition}

Note that $\mathbb{T}$ is an $\mathbb{S}$-name, but we will simply write $\mathbb{T}$ rather than $\dot{\mathbb{T}}$ to simplify the notation; we will do the same for other forcing notions, unless there is an ambiguity. 

Later we will define the following forcing notions.  

\begin{itemize}
\item $\mathbb{R}\in V^{\mathbb{C}_{fin}\ast \mathbb{S}}$ will be an iteration for destroying the stationary sets that do not reflect in $V^{\mathbb{C}_{fin}\ast \mathbb{S}}.$ $\mathbb{R}$ is defined in \autoref{subsection: generically preservation of stationary sets}.
\item $\mathbb{P}\in V$ will be a version of Magidor and Shelah construction to force the Delta reflection at $\aleph_{\omega^2+1}.$ This is a Prikry type forcing and it will be defined in \autoref{subsec: prikry forcing}. 
\item $\mathbb{P}^*$ will be a $\lambda$-directed sub-forcing of $\mathbb{P}$ depending on a generic filter for $\mathbb{C}_{fin}.$ It will be defined in \autoref{subsec: prikry forcing}. 
\end{itemize}

Our final model is obtained by forcing with  $\mathbb{C}_{fin}\ast\mathbb{S}\ast\mathbb{R}\ast\mathbb{P}^*.$ 
In order to prove that the Delta reflection holds at $\aleph_{\omega^2+1}=\lambda^+$ after this forcing, we will first show that $\lambda^+$ has the Delta reflection in the larger model  $V^{\mathbb{C}_{fin}\ast\mathbb{S}\ast\mathbb{R}\ast\mathbb{P}^*\ast\mathbb{T}},,$ then we will show that we can pull this back to our model. 

\subsection{Generic preservation of stationary sets}\label{subsection: generically preservation of stationary sets} 

In this subsection, we work in the model $W:= V^{\mathbb{C}_{fin}}$ and we let $\kappa$ be $\lambda^+.$ The results presented in this subsection do not depend on this particular choice of $W$ and $\kappa,$ they can consider to hold for any model $W$ and for any regular cardinal $\kappa$ such that $2^{<\kappa}=\kappa$ and $2^{\kappa}=\kappa^+.$ 

\begin{definition}
We say that a forcing $\mathbb{Q}$ generically preserves stationary subsets of $\kappa$ if for every stationary set $S\subseteq\kappa$ there is $q\in\mathbb{Q}$ such that $q\Vdash_{\mathbb{Q}} \check S$ is stationary.
\end{definition}

Now, we want to define the forcing notion $\mathbb{R}$ that will force $\mathbb{T}$ to generically preserve the stationary subsets of $\kappa.$ We start by stating some well known properties of $\mathbb{S}$ and $\mathbb{T}$.
 
\begin{lemma} Let $\mathbb{S}$ and $\mathbb{T}$ be as in Definitions~\ref{def: definition of S} and \ref{def: definition of T}. Then:
\begin{itemize}
\item $\mathbb{S}$ is $\sigma$-closed
\item $\Vdash_{\mathbb{S}} \square(\kappa)$.
\item $\mathbb{S}\ast\mathbb{T}$ has a $\kappa$-closed dense sub-forcing. 
\end{itemize}
\end{lemma}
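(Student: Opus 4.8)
The plan is to treat the three bullets in order, since the closure arguments share one device: at a limit of the domain one must supply a coherent ``top'' club, and the cofinality of that limit decides whether this is possible.

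For the first bullet, let a decreasing $\omega$-sequence $\langle s_n : n<\omega\rangle$ be given, with $s_n = \langle C^n_\alpha : \alpha \le \gamma_n\rangle$ and $s_n \sqsubseteq s_{n+1}$, so $\langle \gamma_n\rangle$ is nondecreasing. If it is eventually constant there is nothing to prove; otherwise $\gamma := \sup_n \gamma_n$ satisfies $\cf(\gamma)=\omega$. Putting $C_\alpha := C^n_\alpha$ for $\alpha \le \gamma_n$ defines a coherent sequence below $\gamma$, and I would complete it by letting $C_\gamma$ be any cofinal subset of $\gamma$ of order type $\omega$. Since such a set has $\acc(C_\gamma) = \emptyset$, the coherence clause is vacuous at the top, so $\langle C_\alpha : \alpha \le \gamma\rangle \in \mathbb{S}$ is a lower bound. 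This also shows why one cannot expect more than $\sigma$-closure from $\mathbb{S}$ alone: at a limit of uncountable cofinality the tops $C^n_{\gamma_n}$ need not cohere, so no legitimate $C_\gamma$ need exist.

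For the second bullet, write $\mathcal{C} = \langle C_\alpha : \alpha < \kappa\rangle$ for the union of the generic filter. Clauses (1) and (2) of $\square(\kappa)$ are conjunctions of local requirements satisfied by every condition, hence hold of $\mathcal{C}$; and $\dom \mathcal{C} = \kappa$ follows from the density, for each $\gamma < \kappa$, of the conditions whose domain reaches $\gamma$ — given $s$ with top $\gamma_s < \gamma$ one extends by $C_{\gamma_s + k+1} = \{\gamma_s+k\}$ and, at the limit $\gamma_s + \omega$, by a cofinal $\omega$-sequence. The substantive clause is (3). Suppose $p \Vdash \dot D$ is a thread. Using $\sigma$-closure and the forced unboundedness of $\dot D$, I would build a decreasing $\langle p_n\rangle$ with tops $\gamma_n \to \gamma$ and ordinals $\delta_n$, $\gamma_n < \delta_n < \gamma_{n+1}$, such that $p_{n+1} \Vdash \delta_n \in \dot D$. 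Any lower bound $q$ then forces $\{\delta_n : n<\omega\} \subseteq \dot D \cap \gamma$, hence forces $\gamma \in \acc\dot D$ and, by the thread property, $\dot D \cap \gamma = C^q_\gamma$. But $\cf(\gamma)=\omega$ leaves me free to take the top $C^q_\gamma$ to be a cofinal $\omega$-sequence omitting $\delta_0$; this $q$ simultaneously forces $\delta_0 \in \dot D \cap \gamma = C^q_\gamma$ while $\delta_0 \notin C^q_\gamma$, a contradiction. So no thread is added.

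For the third bullet, I would exhibit the dense subforcing $\mathbb{U} \subseteq \mathbb{S}\ast\mathbb{T}$ whose conditions are the pairs $(s, \check C^s_{\gamma_s})$ with $\gamma_s$ a limit ordinal, i.e. the thread approximation is forced equal to the top club of $s$. Density: given $(s,\dot t)$, strengthen $s$ to decide $\dot t = \check C_\beta$, then extend the square sequence through $\beta$ by a new top $C_{\beta+\omega} = C_\beta \cup \{\beta\} \cup \{\beta + k : 1\le k<\omega\}$, which coheres and has $\beta \in \acc(C_{\beta+\omega})$; the resulting condition lies in $\mathbb{U}$ below $(s,\dot t)$. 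For $\kappa$-closure, let $\langle (s_i, \check t_i) : i<\eta\rangle$, $\eta<\kappa$, be decreasing, so the tops $t_i = C^{s_i}_{\gamma_i}$ form a $\sqsubseteq$-increasing coherent chain of clubs. Setting $\gamma := \sup_i \gamma_i$ and $C_\gamma := \bigcup_i t_i$, the point is that $C_\gamma$ is club in $\gamma$ and coheres with the whole sequence: for $\beta \in \acc(C_\gamma)$ one has $\beta \in \acc(t_i)$ for some $i$ with $C_\gamma \cap \gamma_i = t_i$, whence $C_\gamma \cap \beta = t_i \cap \beta = C_\beta$. Thus the condition $s^\ast$ extending $\bigcup_i s_i$ with top $C_\gamma$ yields a lower bound $(s^\ast, \check C_\gamma) \in \mathbb{U}$.

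The main obstacle is this last verification together with clause (3): both turn on producing a coherent top club at a limit of the domain. For clause (3) the difficulty becomes the engine of the diagonalization — the freedom to choose a cofinality-$\omega$ top is exactly what kills any candidate thread — whereas for $\kappa$-closure the thread component is precisely what supplies a coherent top at limits of uncountable cofinality, where $\mathbb{S}$ alone fails. I would therefore spend most care on checking that $C_\gamma = \bigcup_i t_i$ is genuinely closed and coheres with all $C_\beta$, $\beta < \gamma$, since this is where the square coherence and the threading chain must be reconciled.
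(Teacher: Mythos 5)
Your treatment of the first two bullets, and of the $\kappa$-closure of your dense set $\mathbb{U}$ in the third bullet, matches the paper's proof essentially step for step (the paper's dense set $\mathcal{D}$, consisting of pairs whose thread part is the top club of the $\mathbb{S}$-part, is exactly your $\mathbb{U}$). The step that fails as written is your density argument for $\mathbb{U}$. After you strengthen $s$ to decide $\dot t=\check C_\beta$, the ordinal $\beta$ is merely \emph{some} limit point of $\dom s$; it can lie far below the top $\gamma_s$ of the (strengthened) condition. You then place the new top at $\beta+\omega$ with club $C_\beta\cup\{\beta\}\cup\{\beta+k :\ 1\le k<\omega\}$, but if $\beta+\omega\le\gamma_s$ this is not an extension of $s$ at all: the entries of the square sequence at the ordinals in $(\beta,\beta+\omega]$ are already fixed by $s$, and in general $C^s_{\beta+\omega}$ does not contain $\beta$, let alone end-extend $C_\beta$, so you cannot redeclare it. Conditions such as $(s,\check C_\beta)$ with $\dom s$ much longer than $\beta+\omega$ are perfectly legitimate elements of $\mathbb{S}\ast\mathbb{T}$, and for them your construction produces nothing in $\mathbb{U}$ below them.

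The repair is small and uses exactly the idea you already isolated (a discrete tail above $\beta$ kills every coherence obligation except at $\beta$ and inside $C_\beta$): put the new top \emph{above the whole domain} of the strengthened condition. Concretely, take $s'$ with domain $\gamma_s+\omega+1$ extending $s$, successor entries in between, and top club $t'=C_\beta\cup\{\beta\}\cup\{\gamma_s+k :\ k<\omega\}$; then $\acc t'\subseteq \acc C_\beta\cup\{\beta\}$, coherence holds by the coherence of $s$ at $\beta$, $t'$ end-extends $C_\beta$, and $(s',\check t')\in\mathbb{U}$ lies below $(s,\check C_\beta)$. (The paper gets density differently, by an interleaved $\omega$-chain with $\dom s_n\ge\sup t_n\ge\dom s_{n-1}$ followed by a union; but each step of that chain secretly needs the same ``jump above the current domain'' move, so your one-step route, once corrected, is a legitimate and arguably cleaner alternative.) The same caution applies to your side remark that conditions of domain at least $\gamma$ are dense for every $\gamma<\kappa$: your one-step extension only gets past limits of countable cofinality, and reaching arbitrary $\gamma$ requires carrying a thread along the construction (this is the $\kappa$-strategic closure of $\mathbb{S}$ that the paper alludes to), not just iterating the one-step move.
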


\begin{proof}
First we prove that $\mathbb{S}$ is $\sigma$-closed. Let $\langle p_n \mid n < \omega\rangle$ be a strictly decreasing $\omega$-sequence of conditions. Let us denote $p_n = \tup{C_\alpha^n \mid \alpha \leq \gamma_n}$. Since $p_0 > p_1 > \cdots p_n > \cdots$ we have $\gamma_0 < \gamma_1 < \cdots$ and, for every $\alpha < \gamma_n$ and $n < m$, we have $C_\alpha^m = C_\alpha^n;$ so we can drop the superscript and simply denote this set $C_\alpha.$   
Let $\gamma_\omega = \sup_{n < \omega} \gamma_n$, and let $C_{\gamma_\omega}$ be any cofinal $\omega$-sequence at $\gamma_\omega$. 
Let $q = \tup{ C_\alpha \mid \alpha < \gamma_\omega} ^\smallfrown \tup{C_{\gamma_\omega}}$. $q$ is a condition, and it is stronger than all $p_n$. 

Let us show that there is no thread for the generic square sequence added by $\mathbb{S}.$ Let $\dot C$ be a name for a club in $W^{\mathbb{S}}$. 
We want to prove that there is an ordinal $\alpha$ and a condition $p$ such that $p \Vdash C_\alpha \neq \dot C\cap\check\alpha$ while $p\Vdash \check\alpha \in \acc \dot C$. 
Choose by induction an increasing sequence of ordinals $\beta_n$ and a decreasing sequence of conditions $p_n$ such that $p_n \Vdash \check \beta_n \in \dot C$ and $\beta_n > \dom p_{n-1}$ (for $n > 0$). 
Let $\beta_\omega = \sup \beta_n$. Every lower bound $q$ of the sequence $p_n$, forces that $\beta_\omega \in \dot C$ so if we pick $C_{\beta_\omega}$ to be a cofinal sequence which is disjoint from $\{\beta_n \mid n < \omega\}$, $q \Vdash C_{\beta_\omega} \neq \dot C \cap \check\beta_\omega$ while $q \Vdash \check\beta_\omega \in \acc \dot C$. 

Finally, we show that the following subposet of $\mathbb{S}\ast \mathbb{T}$ is dense and $\kappa$-closed:  
$$\mathcal D = \{\tup{s, \check{t}} \in \mathbb{S} \ast \mathbb{T} \mid \dom s = \max t + 1 \}.$$
First we show that $\mathcal{D}$ is dense. For every $\tup{s, t}\in \mathbb{S}\ast\mathbb{T}$ let us pick by induction a decreasing sequence of elements $\tup{s,t}\geq \cdots \geq \tup{s_{n-1}, t_{n-1}} \geq \tup{s_n, t_n}\geq \cdots $ such that $\dom s_n \geq \max t_n \geq \dom s_{n-1}$. 
Let $s_\omega:= \bigcup_{n<\omega} s_n$ and $t_\omega:= \bigcup_{n<\omega} t_n,$ then the condition $\tup{s_\omega\smallfrown t_\omega, t_\omega}$ belongs to $\mathcal{D}.$ 
To show that $\mathcal D$ is $\kappa$-closed it is enough to observe that given some $\mu < \kappa$ and a decreasing sequence of conditions $\tup{s_\alpha, t_\alpha}$, for $\alpha < \mu$, the pair $\tup{ \bigcup_{\alpha} s_\alpha ^\smallfrown \tup{\bigcup_\alpha t_\alpha}, \bigcup_\alpha t_\alpha}$ is a condition in $\mathbb{S} \ast \mathbb{T}.$ 
\end{proof} 

It follows from this lemma that in particular, $\mathbb{S}$ and $\mathbb{T}$ are $\kappa$-distributive ($\mathbb{S}$ is even $\kappa$-strategically closed). 

We are now ready to define in $W^{\mathbb{S}}$ the preparatory forcing iteration $\mathbb{R}$ that will kill the ``bad'' stationary sets, namely those that constitute a counterexample to the Delta-reflection in $W^{\mathbb{S}}$ but are destroyed by $\mathbb{T}.$ 
Intuitively, we would like to add a club disjoint from every such set, however killing one bad set might introduce new bad sets, so we have to iterate. In the iteration, we only kill stationary sets that are going to be destroyed by the threading forcing $\mathbb{T}.$ Since $\mathbb{T}$ does not collapse cardinals, this approach enables us to find a $\kappa$-closed dense subset in the iteration in some extension of the universe with the same cardinals, thus we can make sure that cardinals are not collapsed in this process. This technique for killing \emph{fragile} sets appears, for example, in \cite[Section 10]{Magidor-Cummings-Foreman-Squares}. 

For technical reasons, we would like this preparatory forcing iteration to be homogeneous. This means that we cannot simply iterate over bad stationary sets, since their stationarity might depend on the generic filter for the previous steps. Thus, in the iteration we kill the subsets of $\kappa$ which are forced to be non-stationary by the maximal condition of $\mathbb{T}$. In particular, we will add a Cohen subset of $\kappa$ at cofinally many steps.

Recall that for a given subset $A$ of $\mu$ (where $\mu$ is a cardinal), there is a forcing notion that adds a club $\dot{C}$ disjoint from $A:$ the conditions of this forcing are closed sequences of length less than $\mu$ of ordinals in 
$\mu\setminus A;$ the order is the end-extension. We refer to this forcing notion as the forcing that ``shoots a club disjoint from $A$''.

\begin{definition} In $W^{\mathbb{S}},$ we define $\mathbb{R}$ as an iteration of length $\kappa^+$ with support of size less than $\kappa$ of a certain sequence $\langle \dot{\mathbb{Q}}_\alpha;\ \alpha<\kappa^+\rangle$ (yet to be defined).  At each stage we will have $\force_{\mathbb{R}_\alpha}\ \vert \dot{\mathbb{Q}}_\alpha\vert =\kappa$ (where $\mathbb{R}_\alpha$ is the iteration up to stage $\alpha$) thus $\mathbb{R}$ will be $\kappa^+$-c.c.   
Every $\mathbb{R}_\alpha$-name $\dot{A}$ for a subset of $\kappa$ can be represented by $\kappa$ many antichains of $\mathbb{R}_\alpha$ and, as every such antichain has size less than $\kappa^+,$ there are at most $(\kappa^+)^{\kappa}$ many such antichains, hence since $2^{\kappa}=\kappa^+,$ there are at most $\kappa^+$ many $\mathbb{R}_\alpha$-names for subsets of $\kappa.$ Let $f$ be a function from $\kappa^+$ onto $\kappa^+\times \kappa\times \kappa$ such that if $f(\alpha)=(\beta, \gamma, \delta),$ then $\beta<\alpha.$ 
We now define the $\dot{\mathbb{Q}}_\alpha$'s inductively as follows. 
Suppose $\langle \dot{\mathbb{Q}}_\beta;\ \beta<\alpha\rangle$ has been defined, let $f(\alpha)=(\beta,\gamma, \delta).$ Let $\dot{A}$ be the $\gamma$-th $\mathbb{R}_\beta$-name for a subset of $\kappa.$ If 
$$\langle 1_{\mathbb{S}\ast \mathbb{R}_\alpha}, 1_\mathbb{T} \rangle\force^W\ \dot{A} \textrm{ is non-stationary },$$
(namely the maximal condition of $\mathbb{T}$ forces over $W^{\mathbb{S}\ast \mathbb{R}_\alpha}$ that $\dot{A}$ is non-stationary),  
then we let $\dot{A}_{\alpha}=\dot{A},$ otherwise we let $\dot{A}_{\alpha}$ be the empty set. We let $\dot{\mathbb{Q}}_\alpha$ be (an $\mathbb{R}_\alpha$-name for) the forcing that shoots a club disjoint from $\dot{A}_\alpha.$ 
\end{definition}
 
\begin{lemma} $W^{\mathbb{S}\ast\mathbb{R}}\models\ \mathbb{T}$ generically preserves stationary subsets of $\kappa$
\end{lemma}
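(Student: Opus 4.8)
The plan is to argue by contradiction, exploiting the bookkeeping of $\mathbb{R}$ together with its chain condition, its homogeneity, and the behaviour of the club-shooting forcings once the thread has been added. Fix a set $S\subseteq\kappa$ that is stationary in $W^{\mathbb{S}\ast\mathbb{R}}$ and suppose, toward a contradiction, that no condition of $\mathbb{T}$ forces $S$ to be stationary. Since ``$S$ is non-stationary'' is witnessed by a disjoint club and hence persists once it becomes true, the set of conditions forcing $S$ non-stationary is then dense, so this assumption is equivalent to $\langle 1_\mathbb{T}\rangle\Vdash^{W^{\mathbb{S}\ast\mathbb{R}}}\check S$ is non-stationary. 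First I would record that, because $\mathbb{R}$ is $\kappa^+$-c.c.\ and $\kappa^+$ is regular, a name for $S$ is determined by $\kappa$ antichains, each of size at most $\kappa$; hence $S$ has a name $\dot S$ which is (equivalent to) an $\mathbb{R}_\beta$-name for some $\beta<\kappa^+$, say the $\gamma$-th such name. The goal is to show that at a stage $\alpha$ with $f(\alpha)=(\beta,\gamma,\delta)$ the bookkeeping in fact decides to kill $S$: then $\mathbb{R}$ shoots a club disjoint from $S$, so $S$ is non-stationary already in $W^{\mathbb{S}\ast\mathbb{R}_{\alpha+1}}$ and, since non-stationarity is preserved by further forcing, in $W^{\mathbb{S}\ast\mathbb{R}}$ as well, contradicting the stationarity of $S$.

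Thus the heart of the matter is to verify, at a suitable $\alpha>\beta$, that $\langle 1_{\mathbb{S}\ast\mathbb{R}_\alpha},1_\mathbb{T}\rangle\Vdash^W\dot S$ is non-stationary. For the transfer of this information from $W^{\mathbb{S}\ast\mathbb{R}}$ down to the intermediate model $W^{\mathbb{S}\ast\mathbb{R}_\alpha}$ I would use the structural feature deliberately built into $\mathbb{R}$: only sets that the thread makes non-stationary are ever killed, so once we force with $\mathbb{T}$, every set shot by the tail $\mathbb{R}_{[\alpha,\kappa^+)}$ is already non-stationary and the corresponding club-shootings become $\kappa$-closed. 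Consequently $\mathbb{R}_{[\alpha,\kappa^+)}\ast\mathbb{T}$ reorders as $\mathbb{T}\ast\mathbb{U}$ with $\mathbb{U}$ a $\kappa$-closed forcing over $W^{\mathbb{S}\ast\mathbb{R}_\alpha}[H']$, where $H'$ is the generic thread. Since $\kappa$-closed forcings preserve stationary subsets of $\kappa$, the contrapositive gives what I need: if $1_\mathbb{T}$ forces $S$ non-stationary over the larger model $W^{\mathbb{S}\ast\mathbb{R}}$, then for every $\mathbb{T}$-generic $H'$ over $W^{\mathbb{S}\ast\mathbb{R}_\alpha}$ the set $S$ is already non-stationary in $W^{\mathbb{S}\ast\mathbb{R}_\alpha}[H']$. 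In other words, $1_\mathbb{T}\Vdash^{W^{\mathbb{S}\ast\mathbb{R}_\alpha}}\check S$ is non-stationary holds in our extension.

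It then remains to upgrade ``holds for the actual generic'' to ``forced by the trivial condition'', and here I would invoke the homogeneity engineered into $\mathbb{R}$ by adding Cohen subsets of $\kappa$ cofinally often. As $\dot S$ is an $\mathbb{R}_\beta$-name, after fixing the generic for $\mathbb{R}_\beta$ the assertion ``$1_\mathbb{T}\Vdash\check S$ is non-stationary'' is a statement of $W^{\mathbb{S}\ast\mathbb{R}_\alpha}$ whose only nontrivial parameter, $S$ (together with $\mathbb{T}$), lies in $W^{\mathbb{S}\ast\mathbb{R}_\beta}$. Weak homogeneity of the segment $\mathbb{R}_{[\beta,\alpha)}$ forces such a statement to be decided by $1$, and since it is true in our extension it must be forced true; this is exactly $\langle 1_{\mathbb{S}\ast\mathbb{R}_\alpha},1_\mathbb{T}\rangle\Vdash^W\dot S$ non-stationary. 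Choosing $\alpha$ with $f(\alpha)=(\beta,\gamma,\delta)$ (which automatically satisfies $\beta<\alpha$ by the choice of $f$), the definition of $\mathbb{R}$ sets $\dot A_\alpha=\dot S$ and shoots a club disjoint from $S$, delivering the contradiction.

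I expect the main obstacle to be the downward transfer in the second paragraph: making precise that after threading the tail club-shootings are genuinely $\kappa$-closed and that $\mathbb{R}_{[\alpha,\kappa^+)}\ast\mathbb{T}$ reorders as $\mathbb{T}$ followed by a $\kappa$-closed forcing, so that the $\mathbb{T}$-fragility of $S$ is independent of the stage of the iteration. This is exactly the point for which $\mathbb{R}$ was restricted to killing only sets destroyed by the thread, and it is where preservation of stationarity under $\kappa$-closed forcing does the real work. By contrast, the chain-condition reduction to an $\mathbb{R}_\beta$-name and the passage from a condition to the trivial condition via homogeneity are comparatively routine bookkeeping and symmetry arguments.
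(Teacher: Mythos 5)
Your outline reproduces the paper's skeleton (argue by contradiction, capture a name for $S$ at a stage $\beta$ by the $\kappa^+$-c.c., have the bookkeeping catch it later and shoot a club through its complement), and your second paragraph addresses a transfer that the paper's own terse proof glosses over. That transfer can indeed be made to work, though two caveats apply: it must be run \emph{below a condition} $(s,r)\in G_{\mathbb{S}\ast\mathbb{R}_\beta}$ forcing the fragility of $\dot S$, because the recomposed generic $G_\alpha\ast K$ in your argument agrees with yours only up to $\alpha$; and it can be had more cheaply, since the witnessing club has a $\mathbb{T}$-name already in $W^{\mathbb{S}\ast\mathbb{R}_\beta}$ and ``is a club disjoint from $S$'' is absolute between the intermediate and the full extension. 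The genuine gap is in your third paragraph. The bookkeeping requirement $\langle 1_{\mathbb{S}\ast\mathbb{R}_\alpha},1_\mathbb{T}\rangle\Vdash^{W}$ ``$\dot S$ is non-stationary'' quantifies over \emph{all} $\mathbb{S}\ast\mathbb{R}_\alpha$-generics, in particular over generics whose restriction to $\mathbb{S}\ast\mathbb{R}_\beta$ differs from yours, under which $\dot S$ may name an entirely different set that the thread does not kill; for the natural name $\dot S$ the maximal-condition statement can therefore simply be \emph{false}. No symmetry argument can bridge this: homogeneity decides statements whose parameters are fixed by the automorphisms (check names), and over $W$ the parameter of your statement is the name $\dot S$, not a check name. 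Weak homogeneity of the segment $\mathbb{R}_{[\beta,\alpha)}$ over $W^{\mathbb{S}\ast\mathbb{R}_\beta}$ --- which the paper never proves; its weak-homogeneity lemma builds automorphisms of $\mathbb{S}\ast\mathbb{R}\ast\mathbb{T}$ from seeds in $\mathbb{S}\ast\mathbb{T}$, and these induce the identity on the $\mathbb{R}$-coordinates once you fix everything up to $\beta$ --- would at best give the statement for generics extending your $G_{\mathbb{S}\ast\mathbb{R}_\beta}$, which is strictly weaker than what the definition of $\mathbb{R}$ demands.

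The repair is not symmetry but a change of name. Pick $(s,r)\in G_{\mathbb{S}\ast\mathbb{R}_\beta}$ forcing over $\mathbb{S}\ast\mathbb{R}$ that $1_\mathbb{T}$ forces $\dot C$ to be a club disjoint from $\dot S$, and replace $\dot S$ by its truncation
$\dot S^{*}=\bigl\{\langle (s',r'),\check\xi\rangle \mid (s',r')\le (s,r),\ (s',r')\Vdash\check\xi\in\dot S\bigr\}$.
This is again an $\mathbb{S}\ast\mathbb{R}_\beta$-name; it evaluates to $S$ under any generic containing $(s,r)$ (in particular under yours) and to $\emptyset$ otherwise. Combining your paragraph-two transfer, applied below $(s,r)$, with the trivial case for generics omitting $(s,r)$, the \emph{maximal} condition does force $\dot S^{*}$ to be non-stationary after $\mathbb{T}$. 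Since the bookkeeping enumerates all $\mathbb{R}_\beta$-names, some stage $\gamma>\beta$ sets $\dot A_\gamma=\dot S^{*}$ and shoots a club disjoint from $S$, giving the contradiction; this is what the paper's phrase ``$\dot A_\gamma$ is a name for $A$'' is implicitly exploiting. Note that your closing assessment has the difficulty inverted: the closure/transfer step is the comparatively safe part, while the ``routine passage to the trivial condition via homogeneity'' is exactly where the argument breaks.
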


\begin{proof} Let $A\subseteq \kappa$ be a stationary set in $W^{\mathbb{S}\ast\mathbb{R}}.$ Suppose by contradiction that the maximal condition of $\mathbb{T}$ forces that $A$ is not stationary. In $W^{\mathbb{S}\ast\mathbb{R}}$, let $\dot{C}$ be a $\mathbb{T}$-name for a club in $\kappa$ disjoint from $A.$  Since the iteration is $\kappa^+$-c.c.,  $\dot{C}$ belongs to $W^{\mathbb{S}\ast\mathbb{R}_\beta}$ for some $\beta < \kappa^+.$ We chose $f$ in such a way that every $\mathbb{S}\ast \mathbb{R}$-name for a subset of $\kappa$ will be considered unboundedly often during the iteration. Thus, we have an ordinal $\gamma > 
\beta$ in which $\dot{A}_\gamma$ is a name for $A$. Therefore $\mathbb{Q}_\gamma$ is the forcing that adds a club disjoint from $A$, and $A$ is not stationary in $W^{\mathbb{S}\ast\mathbb{R}},$ a contradiction. 
\end{proof}

The main issue is why the iteration $\mathbb{R}$ does not collapse cardinals:

\begin{lemma}\label{lemma: SRalphaT contains a kappa closed dense subset}
For all $\alpha \leq \kappa^+$, $\mathbb{S}\ast\mathbb{R}_\alpha \ast \mathbb{T}$ contains a $\kappa$-closed dense subset ($\mathbb{R}_{\kappa^+}$ is $\mathbb{R}$). 
\end{lemma}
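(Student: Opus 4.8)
The plan is to prove this by induction on $\alpha \leq \kappa^+$, exhibiting at each stage an explicit $\kappa$-closed dense subset of $\mathbb{S} \ast \mathbb{R}_\alpha \ast \mathbb{T}$. The guiding intuition is the same one that made $\mathbb{S} \ast \mathbb{T}$ work in the previous lemma: once we adjoin the thread $\mathbb{T}$, the pair $\langle s, \check t\rangle$ with $\dom s = \max t + 1$ becomes "rigid" enough that lower bounds of decreasing sequences can be taken pointwise. I would define the candidate dense set $\mathcal{D}_\alpha$ to consist of those triples $\langle s, \dot r, \check t\rangle$ in $\mathbb{S} \ast \mathbb{R}_\alpha \ast \mathbb{T}$ where (i) $\langle s, \check t\rangle$ already lies in the dense set $\mathcal{D}$ from the previous lemma (so $\dom s = \max t + 1$), and (ii) the $\mathbb{R}_\alpha$-component $\dot r$ is, below $\langle s, \check t\rangle$, decided to be an actual sequence of clubs whose lengths match up with $\max t$ — i.e. each coordinate of $\dot r$ is forced to be a closed bounded subset of $\kappa$ with top element exactly $\max t$. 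The point of condition (ii) is that the generic clubs added by the $\dot{\mathbb{Q}}_\beta$'s "shoot disjoint from $\dot A_\beta$" by end-extension, and once the thread pins down a height $\max t$, we can demand that every coordinate reaches exactly that height, killing the distributivity obstruction.

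First I would verify density. Given an arbitrary $\langle s, \dot r, \check t\rangle$, I would run the same $\omega$-length interleaving argument as in the $\mathbb{S} \ast \mathbb{T}$ case: alternately extend the thread $t$ to push up $\max t$, extend $s$ to match the domain, and use the $\kappa^+$-c.c.\ (equivalently, the fact that each $\dot{\mathbb{Q}}_\beta$ is forced to have size $\kappa$ and shoots clubs by end-extension) to extend each coordinate of $\dot r$ up to the current height. Taking unions at stage $\omega$ and closing off produces a condition in $\mathcal{D}_\alpha$ below the original; the key is that the supports have size $<\kappa$ so only $<\kappa$ coordinates must be simultaneously pushed up, and the thread forces a common height for all of them.

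Next I would verify $\kappa$-closure. Given a decreasing sequence $\langle s_\xi, \dot r_\xi, \check t_\xi\rangle$ of length $\mu < \kappa$ from $\mathcal{D}_\alpha$, I take $s_* = \bigcup_\xi s_\xi$, $t_* = \bigcup_\xi t_\xi$, and for the middle coordinate I take the coordinatewise union of the $\dot r_\xi$ together with a final point $\max t_*$ appended to each active coordinate — exactly mirroring the $\langle \bigcup s_\alpha {}^\smallfrown \langle \bigcup t_\alpha\rangle, \bigcup t_\alpha\rangle$ construction from the $\mathbb{S} \ast \mathbb{T}$ lemma. Because each coordinate was a club disjoint from the relevant $\dot A_\beta$ reaching height $\max t_\xi$, and because $\mathbb{S} \ast \mathbb{T}$ below this condition forces the union to still avoid the sets $\dot A_\beta$ (this is where one invokes that $\dot A_\beta$ was, below the maximal thread condition, forced \emph{non-stationary} — so its complement contains a club and the limit point $\sup \max t_\xi$ can be taken outside $\dot A_\beta$), the union is again a legitimate condition lying in $\mathcal{D}_\alpha$.

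The main obstacle, and the step I expect to require the most care, is the \emph{limit stages} of the induction and the interaction between the iteration's support and the thread. At a limit $\alpha$ of cofinality $<\kappa$ I must check that the coordinatewise construction still lands inside the iteration with support $<\kappa$, and — crucially — that appending the common top point $\max t_*$ to each coordinate does not violate disjointness from $\dot A_\beta$. This is exactly the place where the definition of $\mathbb{R}$ bites: we only ever shoot clubs disjoint from sets $\dot A_\beta$ that $1_{\mathbb{T}}$ forces non-stationary, so below the thread these sets miss a club, and a standard fusion/bookkeeping argument lets me choose the limit heights to land in that club. Assembling these pieces — together with the observation that the name $\dot r$ for the $\mathbb{R}_\alpha$-part can be taken to live in the $\kappa$-closed context because $\mathbb{S} \ast \mathbb{T}$ adds no new $<\kappa$-sequences — yields the $\kappa$-closed dense subset and completes the induction.
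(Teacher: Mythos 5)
Your overall architecture (induction on $\alpha$, explicit dense sets built on top of the $\mathbb{S}\ast\mathbb{T}$ dense set, coordinatewise unions at limits) matches the paper's, but there is a genuine gap at the decisive step: the $\kappa$-closure verification at a successor coordinate. Your condition (ii) only requires each coordinate of $\dot r$ to be (forced to be) a closed bounded set disjoint from $\dot A_\beta$ with top exactly $\max t$. Given a decreasing sequence $\langle s_\xi, \dot r_\xi, \check t_\xi\rangle$ from $\mathcal D_\alpha$, the limit height $\gamma = \sup_\xi \max t_\xi$ is \emph{determined} by the sequence; you do not get to choose it, and nothing in your setup forces $\gamma \notin \dot A_\beta$. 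The complement of $\dot A_\beta$ is merely forced (below $1_{\mathbb{T}}$) to contain a club; it is not itself closed, so the fact that each $\max t_\xi$ avoids $\dot A_\beta$ does not imply that their supremum avoids it, and the coordinatewise union with $\gamma$ appended need not be a condition in $\dot{\mathbb{Q}}_\beta$. The phrases ``the limit point $\sup \max t_\xi$ can be taken outside $\dot A_\beta$'' and ``a standard fusion/bookkeeping argument lets me choose the limit heights'' are exactly where the argument breaks: in a closure proof the sequence is given and there is no freedom left. The same defect infects your density argument, since pushing a coordinate ``up to the current height'' presupposes that the current height is forced outside $\dot A_\beta$, which is not guaranteed.

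The paper closes precisely this gap by anchoring the tops in a \emph{fixed club name}: for each $\beta$ it fixes an $\mathbb{S}\ast\mathbb{R}_\beta\ast\mathbb{T}$-name $\dot E$ for a club disjoint from $\dot A_\beta$ (such a name exists exactly because $\dot A_\beta$ enters the iteration only if $1_{\mathbb{T}}$ forces it non-stationary), and membership in $D_{\beta+1}$ requires $q\restriction\beta \Vdash \max q(\beta) \in \dot E$ --- a strictly stronger demand than $\max q(\beta)\notin \dot A_\beta$. Closure then comes for free: since $\dot E$ is forced to be closed, the (unique maximal) lower bound of the restrictions forces $\gamma = \sup_i \max q_i(\beta) \in \dot E$, hence $\gamma \notin \dot A_\beta$, so appending $\gamma$ yields a condition. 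Density likewise goes through by first extending $q\restriction\beta$ to decide the next element of $\dot E$ above the current top and only then raising the coordinate to that decided point. Your proposal is repairable by replacing (ii) with this $\dot E$-anchored requirement, at which point it essentially becomes the paper's proof; note that the paper also carries an extra inductive strengthening --- decreasing sequences in $D_\alpha$ have \emph{unique} maximal lower bounds, together with coherence of the $D_\alpha$ under restriction and trivial extension --- which is what makes the limit-stage bookkeeping you wave at actually cohere.
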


\begin{proof}
First, note that since $\mathbb{T}\in W^\mathbb{S},$ we have $\mathbb{S}\ast\mathbb{R}_\alpha\ast\mathbb{T} \cong \mathbb{S}\ast\mathbb{T} \ast \mathbb{R}_\alpha$.

The proof is by induction on $\alpha$. We will prove by induction a slightly stronger assertion: for every $\alpha \leq \kappa^+$ there is a dense subset $D_\alpha\subseteq\mathbb{S}\ast\mathbb{R}_\alpha \ast \mathbb{T}$ such that:
\begin{itemize}
\item $D_\alpha$ is $\kappa$-closed, and in fact every decreasing sequence of length $<\kappa$ in $D_\alpha$ has a unique maximal lower bound. 
\item if $\alpha < \beta$ , $p \in D_\beta$ and $q = p \restriction \alpha$ then $q \in D_\alpha$. 
\item if $\alpha < \beta$ , $p \in D_\alpha$, $q \restriction \alpha = p$ and for every $\alpha \leq \gamma < \beta$, $q(\gamma) = 1$ then $q \in D_\beta$.
\item if $\beta$ is a limit ordinal, $q \in \mathbb{S}\ast\mathbb{R}_\beta\ast\mathbb{T}$ and for every $\alpha < \beta$ $q\restriction \alpha \in D_\alpha$ then $q\in D_\beta$.  
\end{itemize}
For $\alpha = 0$ - this is the previous lemma. 
Assume now that the lemma is true for every $\delta < \alpha$. 

If $\alpha$ is a limit ordinal then the set $D_\alpha$ is determined by our last assumption. We need to show that it is dense and $\kappa$-closed. Let $q_0 \in \mathbb{S}\ast\mathbb{R}_\alpha\ast\mathbb{T}$ be any condition and let us find an element in $D_\alpha$ below it. We separate the proof into two cases: first assume that $\cf \alpha < \kappa$. In this case, let $\{\gamma_i \mid i < \cf \alpha\}$ be a continuous cofinal sequence with $\gamma_0 = 0$. We pick by induction on $i < \cf \alpha$ an element $q_i \in \mathbb{S}\ast\mathbb{T}\ast\mathbb{R}_{\gamma_i}$ such that $q_{i + 1}\restriction \gamma_i \leq q_i$, $q_{i + 1} \in D_{\gamma_{i + 1}}$. Note that by our assumption on the sets $D_\beta$, for every limit ordinal $i \leq \cf \alpha$, and $\beta < \gamma_i$, the sequence $q_j \restriction \beta$ for $j < i$ such that $\gamma_j \geq \beta$  is a decreasing sequence in $D_\beta$ so it has a limit. Let $q_i$ be defined in this case as the condition in which for every $\beta < \gamma_i$, $q_i \restriction \beta$ is this unique maximal lower bound. By uniqueness, those lower bounds cohere. 
For the second case, suppose $\cf \alpha \geq \kappa$ then for every $q \in \mathbb{S}_\alpha$, $\dom q$ is bounded below $\alpha$, by some ordinal $\delta$. Let $r \leq q$ in $D_\delta$ then the extension of $r$ with the maximal conditions of the stages between $\delta$ and $\alpha$ is a condition in $D_\alpha$ below $q$. 

Let $\alpha = \beta + 1$ and suppose that $D_\beta$ is defined. We want to define $D_\alpha$. By our choice of $\dot{A}_\beta$, after forcing with $\mathbb{T}$ and $\mathbb{S}\ast\mathbb{R}_\beta$, there is a club disjoint from $\dot{A}_\beta$. Let us fix an $\mathbb{S}\ast \mathbb{R}_{\beta}\ast \mathbb{T}$-name for this club, $\dot{E}$. We define $D_\alpha$ to be the set of all conditions $q$ such that $q\restriction \beta \in D_\beta$, $q(\beta)\in W$ and $q\restriction \beta \Vdash \max q(\beta) \in \dot{E}$. 

We show that $D_\alpha$ is dense. Let $q \in \mathbb{S}\ast\mathbb{T}\ast\mathbb{R}_\alpha$. Using the $\kappa$-closure of $D_\beta$ we may assume that $q(\beta) \in W$. Let us extend $q\restriction \beta$ and decide the next element of $\dot{E}$ above $\gamma:= \max q(\beta).$ By further extension we may assume again that $q\restriction \beta \in D_\beta$. Let us extend $q(\beta)$ by adding $\gamma$ in its top. Clearly now $q \in D_\alpha$.

Let us show that $D_\alpha$ has the desired closure property: let $\eta < \kappa$ and let $\tup{ q_i \mid i < \eta}$ be a descending sequence of elements in $D_\alpha$. In particular, $q_i \restriction \beta \in D_\beta$, so it has a unique maximal lower bound, $r$. Moreover, for every $i,$ we have $r \Vdash \max q_i(\beta) \in \dot{E}$ and therefore if $\gamma = \sup \bigcup q_i (\beta),$ then $r\Vdash \check \gamma \in \dot{E}$. This means that $r\Vdash \check \gamma \notin \dot{A}_\beta$ so $s = \bigcup_{i<\eta} q_i (\beta) \cup \{\gamma\}$ is forced by $r$ to be a condition in $\mathbb{Q}_\beta$, and it is clear that $s^\smallfrown \tup{r}$ is the maximal lower bound of $\tup{ q_i \mid i < \eta}$.
\end{proof}

\begin{remark} 
Note that the dense sets $D_\alpha$ are in fact $\kappa$-\emph{directed} closed, thus $\mathbb{S}\ast \mathbb{R}_\alpha\ast \mathbb{T}$ contains a $\kappa$-directed closed dense subset. 
\end{remark}

\begin{coroll}\label{coroll: directed closure Cfin} The following hold. 
\begin{enumerate}
\item $\mathbb{S}\ast\mathbb{R}\ast\mathbb{T}$ is equivalent to a $\kappa$-directed closed forcing;
\item $\mathbb{R}$ is $\kappa$-distributive;
\item $\mathbb{W}^{\mathbb{S}\ast \mathbb{T}}\models \mathbb{R}\textrm{ contains a $\kappa$-closed dense subset}$
\end{enumerate}
\end{coroll}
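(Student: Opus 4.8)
The three statements are all extracted from Lemma~\ref{lemma: SRalphaT contains a kappa closed dense subset} (instantiated at $\alpha = \kappa^+$, where $\mathbb{R}_{\kappa^+} = \mathbb{R}$) together with the Remark following it, which upgrade the dense subset $D := D_{\kappa^+} \subseteq \mathbb{S}\ast\mathbb{R}\ast\mathbb{T}$ to a $\kappa$-directed closed one in which every descending sequence of length $<\kappa$ has a \emph{unique} maximal lower bound, and with the isomorphism $\mathbb{S}\ast\mathbb{R}\ast\mathbb{T}\cong\mathbb{S}\ast\mathbb{T}\ast\mathbb{R}$ already recorded in that proof. I would treat the parts in the order (1), (2), (3), the last being the only one needing real work.

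For (1): a forcing notion is forcing-equivalent to any of its dense subsets, and $D$ is $\kappa$-directed closed, so $\mathbb{S}\ast\mathbb{R}\ast\mathbb{T}$ is equivalent to the $\kappa$-directed closed poset $D$. For (2): since $\mathbb{S}\ast\mathbb{R}\ast\mathbb{T}$ has a $\kappa$-closed dense subset it is $\kappa$-distributive over $W$; as $\mathbb{S}\ast\mathbb{R}$ is an initial factor it embeds completely into $\mathbb{S}\ast\mathbb{R}\ast\mathbb{T}$, so it too adds no new sequence of ordinals of length $<\kappa$ over $W$. Now if $\mathbb{R}$ added such a sequence $s$ over $W^{\mathbb{S}}$, then $s \in W^{\mathbb{S}\ast\mathbb{R}}$, whence $s \in W \subseteq W^{\mathbb{S}}$ by the distributivity of $\mathbb{S}\ast\mathbb{R}$ over $W$, a contradiction; thus $\mathbb{R}$ is $\kappa$-distributive in $W^{\mathbb{S}}$.

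For (3): fix a $\mathbb{S}\ast\mathbb{T}$-generic $G\ast H$ and transport $D$ through the isomorphism to a $\kappa$-directed closed dense $D \subseteq \mathbb{S}\ast\mathbb{T}\ast\mathbb{R}$. In $W^{\mathbb{S}\ast\mathbb{T}}$ I would set $D^* = \{\dot r^{G\ast H} : \langle (s,t),\dot r\rangle \in D \text{ for some } (s,t)\in G\ast H\}$ and claim this is a $\kappa$-closed dense subset of $\mathbb{R}$. Density is the routine quotient computation: below any $\mathbb{R}$-condition, represented by some $\langle (s,t),\dot r\rangle$ with $(s,t)\in G\ast H$, genericity together with density of $D$ yields an extension in $D$ whose $\mathbb{S}\ast\mathbb{T}$-part remains in $G\ast H$. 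For $\kappa$-closure, given a descending $\langle r_i : i < \eta\rangle$ in $D^*$ with $\eta < \kappa$ and witnesses $\langle (s_i,t_i),\dot r_i\rangle \in D$, $(s_i,t_i)\in G\ast H$, I would build recursively a genuinely descending sequence inside $D$ whose $\mathbb{S}\ast\mathbb{T}$-coordinates stay in $G\ast H$ and whose $\mathbb{R}$-coordinates evaluate below the $r_i$; taking the unique maximal lower bound granted by the Remark and reading off its $\mathbb{R}$-coordinate produces a lower bound of $\langle r_i\rangle$ in $D^*$.

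The main obstacle is exactly this closure argument. One cannot simply pick a single $(s^*,t^*)\in G\ast H$ below all the $(s_i,t_i)$ and reuse the names $\dot r_i$, because the dense set $\mathcal D$ witnessing the closure of $\mathbb{S}\ast\mathbb{T}$ (and hence $D$ itself) is \emph{not} closed under strengthening the $\mathbb{S}\ast\mathbb{T}$-coordinate, so $\langle (s^*,t^*),\dot r_i\rangle$ need not lie in $D$. This is what forces the step-by-step reconstruction of a descending chain inside $D$. Two facts make it go through and should be isolated as small sublemmas: the unique-maximal-lower-bound property from the Remark, which supplies canonical limits at limit stages, and the observation that the generic filter for the $\kappa$-directed closed forcing $\mathbb{S}\ast\mathbb{T}$ is itself $\kappa$-directed and closed under these canonical lower bounds, so that the $\mathbb{S}\ast\mathbb{T}$-part of the resulting lower bound genuinely belongs to $G\ast H$.
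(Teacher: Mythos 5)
Parts (1) and (2) of your proposal are correct, and for part (3) your set $D^*$ is exactly what the paper intends (the authors state it parenthetically in the proof of Corollary~\ref{coroll: existence of square sequence}: take the conditions of $D_{\kappa^+}$ whose first coordinates lie in the generic filter for $\mathbb{S}\ast\mathbb{T}$); your density argument is also fine. The problem is your closure argument, and it is exactly backwards. The obstacle you claim is not there: $D_{\kappa^+}$ \emph{is} closed under strengthening the $\mathbb{S}\ast\mathbb{T}$-coordinate to any stronger condition lying in $\mathcal D$. This follows by induction on $\alpha$ from the definition of $D_\alpha$ in Lemma~\ref{lemma: SRalphaT contains a kappa closed dense subset}: at a successor $\alpha=\beta+1$, membership in $D_\alpha$ asks only that $q\restriction\beta\in D_\beta$, that $q(\beta)\in W$, and that $q\restriction\beta\Vdash \max q(\beta)\in\dot E$; the first clause is preserved by the induction hypothesis, the second is untouched, and the third persists under strengthening, while the limit clause is immediate. (What is true is that $\mathcal D$ is not closed under \emph{arbitrary} strengthening, but that is irrelevant: one re-heads with an element of $\mathcal D\cap(G\ast H)$, which genericity provides.) Consequently the ``impossible'' argument is the correct one: the heads $(s_i,t_i)$, together with, for each $i<j$, a condition of $G\ast H$ forcing $\dot r_j\le\dot r_i$, form a subset of the filter which lies in $W$ by $\kappa$-distributivity of $\mathbb{S}\ast\mathbb{T}$, and which has a lower bound $(s^*,t^*)\in\mathcal D\cap(G\ast H)$ (your second sublemma, which is correct); then $\langle\langle (s^*,t^*),\dot r_i\rangle : i<\eta\rangle$ is a descending sequence \emph{in} $D$ with constant head, its canonical greatest lower bound has the same head $(s^*,t^*)$ (the lower bounds of the Lemma are computed coordinatewise, and the greatest lower bound of a constant head is that head), and evaluating its $\mathbb{R}$-part gives the desired lower bound in $D^*$.

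By contrast, the step-by-step reconstruction you propose in its place has a genuine hole. Since you forbid reusing the names $\dot r_i$, your chain $\langle\langle a_i,\dot c_i\rangle : i<\eta\rangle\subseteq D$ can only guarantee $\dot c_i^{G\ast H}\le r_i$, in general strictly (passing into $D$ forces proper extension of the $\mathbb{R}$-coordinates). At the next step you need $\langle a_{i+1},\dot c_{i+1}\rangle\le\langle a_i,\dot c_i\rangle$ with $a_{i+1}\in G\ast H$ and $\dot c_{i+1}^{G\ast H}\le r_{i+1}$; evaluating along $G\ast H$, this requires $\dot c_i^{G\ast H}$ and $r_{i+1}$ to be compatible. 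But these are just two extensions of $r_i$, and extensions of a common condition need not be compatible, so the recursion can die at any successor step; nothing in your construction prevents $\dot c_i^{G\ast H}$ from diverging from the given sequence. The only way to secure this compatibility is to keep the $\mathbb{R}$-coordinates evaluating exactly to the $r_i$, i.e.\ to reuse the witness names with strengthened heads --- precisely the re-heading your ``obstacle'' rules out. So as written the proof of part (3) does not go through; it is repaired by replacing your obstacle claim with its negation, proved as above.
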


Now we show that some forcing notions preserve $\square(\kappa).$

\begin{lemma}\label{lemma: preserving the coherent sequence}
Let $\mathcal{C}$ be a coherent sequence and let $\mathbb{B}$ be a forcing notion such that $\mathbb{B}\times\mathbb{B}\Vdash \cf \kappa \geq \omega_1$. Then $\mathbb{B}$ does not add a new thread to $\mathcal{C}$.
\end{lemma}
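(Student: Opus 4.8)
The plan is to argue by contradiction via a mutual-genericity argument, exploiting the rigidity of coherent sequences: any two threads to $\mathcal{C}$ must coincide, provided $\kappa$ retains uncountable cofinality. First I would isolate this rigidity fact. Suppose $D_0$ and $D_1$ are both threads to $\mathcal{C}$ in a model where $\cf \kappa \geq \omega_1$. Since $\cf\kappa > \omega$, the intersection $D_0 \cap D_1$ is club in $\kappa$, so its set of limit points is club as well, and every such limit point lies in $\acc D_0 \cap \acc D_1$. For any $\alpha$ in this club the threading condition gives $D_0 \cap \alpha = C_\alpha = D_1 \cap \alpha$, so $D_0$ and $D_1$ agree below cofinally many $\alpha$, whence $D_0 = D_1$. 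This is exactly the place where uncountable cofinality is used: the intersection of two clubs is guaranteed to be club only when $\cf \kappa > \omega$.

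Next I would set up the forcing. Assume toward a contradiction that $\mathbb{B}$ adds a new thread, and fix $b \in \mathbb{B}$ together with a $\mathbb{B}$-name $\dot D$ such that $b$ forces that $\dot D$ is a thread to $\mathcal{C}$ which does not belong to $V$. I then pass to the product $\mathbb{B} \times \mathbb{B}$ and take a generic $G_0 \times G_1$ below $(b,b)$. Interpreting $\dot D$ along the two coordinates produces $D_0 = \dot D^{G_0} \in V[G_0]$ and $D_1 = \dot D^{G_1} \in V[G_1]$, each a thread to $\mathcal{C}$ in its own intermediate model.

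I would then check that being a thread to $\mathcal{C}$ is absolute between each $V[G_i]$ and the full extension $V[G_0 \times G_1]$: the closure of $D_i$, its unboundedness in $\kappa$ (i.e. $\sup D_i = \kappa$), the computation of $\acc D_i$, and the clause $D_i \cap \alpha = C_\alpha$ (which refers only to the ground-model parameter $\mathcal{C}$) are all absolute. So in $V[G_0 \times G_1]$ both $D_0$ and $D_1$ are threads to $\mathcal{C}$. Since by hypothesis $\mathbb{B}\times\mathbb{B} \Vdash \cf\kappa \geq \omega_1$, the rigidity fact applies there and forces $D_0 = D_1$. Finally, mutual genericity gives $V[G_0] \cap V[G_1] = V$; as $D_0 = D_1$ lies in both $V[G_0]$ and $V[G_1]$, it belongs to $V$, contradicting the choice of $b$.

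The main obstacle, and the reason the hypothesis is imposed on $\mathbb{B}\times\mathbb{B}$ rather than on $\mathbb{B}$ alone, is precisely that the uniqueness-of-threads argument has to be run inside the product extension $V[G_0\times G_1]$, where we need $\kappa$ to still have uncountable cofinality for the relevant club intersections to stay club; a single factor preserving $\cf\kappa \geq \omega_1$ would be useless here, since the product could still collapse $\cf\kappa$ to $\omega$ and destroy the rigidity. The only other point demanding attention is the absoluteness step, but each clause involved is manifestly $\Delta_0$ in the parameters $D_i$, $\kappa$, and $\mathcal{C}$, so I expect it to be routine.
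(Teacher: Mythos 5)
Your proof is correct and takes essentially the same approach as the paper's: both run a mutual-genericity argument over $\mathbb{B}\times\mathbb{B}$, using the hypothesis that the product preserves the uncountable cofinality of $\kappa$ to produce a common accumulation point of the two interpretations of the thread name, at which coherence forces the two threads to agree. The only difference is bookkeeping: the paper reaches the contradiction via the minimum of the symmetric difference $\dot D^{G}\,\Delta\,\dot D^{H}$, whereas you package the same content as a uniqueness-of-threads lemma combined with the fact that $V[G_0]\cap V[G_1]=V$ for mutually generic filters.
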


\begin{proof}
Let $\mathcal{C} = \langle C_\alpha \mid \alpha < \kappa\rangle$. Suppose for a contradiction that there is a new thread $\dot{D}.$ Let $G\times H$ be a generic filter for $\mathbb{B}\times \mathbb{B}$. Since $\Vdash \dot{D}\notin W$, we have $\dot{D}^G\neq \dot{D}^H$. 
Let $\beta:= \min (\dot{D}^G\Delta \dot{D}^H)$ (i.e. the least ordinal in the symmetric difference). Since $\dot{D}^G, \dot{D}^H$ are both clubs at $\kappa$ and $\cf \kappa > \omega$ in $W[G\times H]$, there is $\alpha \in \acc \dot{D}^G \cap \acc \dot{G}^H$ above $\beta$. But, by the definition of thread, $\dot{D}^G\cap \alpha = C_\alpha = \dot{D}^H\cap \alpha,$ thus in particular $\beta\in \dot{D}^G\cap \dot{D}^H,$ contradicting our choice of $\beta.$ \end{proof}

\begin{lemma}\label{lemma: preserving the square sequence}
Let $\mathcal{C}$ be a $\square(\kappa)$ sequence and let $\mathbb{B}$ be a $\kappa$-c.c.\ forcing notion. Then $\mathcal{C}$ is a $\square(\kappa)$ sequence in $W^{\mathbb{B}}$.
\end{lemma}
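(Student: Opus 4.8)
The plan is to isolate the only clause of ``$\square(\kappa)$-sequence'' that is genuinely at stake and then reduce it to Lemma~\ref{lemma: preserving the coherent sequence}.

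First I would observe that the sequence $\mathcal{C} = \langle C_\alpha \mid \alpha < \kappa\rangle$ is a fixed object of $W$ which $\mathbb{B}$ does not alter, and that clauses (1) and (2) in the definition of $\square(\kappa)$ are first-order properties of $\mathcal{C}$ quantifying only over bounded subsets of $\kappa$. Indeed, for each fixed $\alpha < \kappa$ the assertions ``$C_\alpha \subseteq \alpha$ is closed and unbounded'', the successor clause $C_\alpha = \{\alpha-1\}$, and the coherence requirement ``$\forall \beta \in \acc C_\alpha\ (C_\alpha \cap \beta = C_\beta)$'' are statements about the fixed sets $C_\alpha$ with accumulation points computed inside $\alpha+1$; hence they are absolute between $W$ and $W^{\mathbb{B}}$. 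So (1) and (2) persist, and the only clause that could fail in $W^{\mathbb{B}}$ is (3), the non-existence of a thread. Since $\mathcal{C}$ is a $\square(\kappa)$-sequence in $W$, there is no thread in $W$, so \emph{every} thread appearing in $W^{\mathbb{B}}$ is new; thus it suffices to prove that $\mathbb{B}$ adds no new thread to $\mathcal{C}$.

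For this I would invoke Lemma~\ref{lemma: preserving the coherent sequence}, noting that a $\square(\kappa)$-sequence is in particular a coherent sequence. That lemma guarantees that $\mathbb{B}$ adds no new thread as soon as $\mathbb{B} \times \mathbb{B} \Vdash \cf \kappa \geq \omega_1$. Recalling that $\kappa = \lambda^+$ is regular, so $\cf \kappa = \kappa \geq \omega_1$ in $W$, the entire lemma reduces to verifying this one hypothesis about the square of $\mathbb{B}$ from the assumption that $\mathbb{B}$ is $\kappa$-c.c.

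The main obstacle is precisely this verification: I expect the hard part to be showing $\mathbb{B} \times \mathbb{B} \Vdash \cf \kappa \geq \omega_1$, i.e. that the square of $\mathbb{B}$ does not add a cofinal $\omega$-sequence to $\kappa$. The subtlety is that $\mathbb{B} \times \mathbb{B}$ need \emph{not} itself be $\kappa$-c.c., so one cannot simply bound the antichains of the product and read off the values of such a sequence. The route I would attempt is to exploit the chain condition of each coordinate separately: writing the square as a two-step iteration and, given a name $\dot f$ for a map $\omega \to \kappa$, using that for a fixed first-coordinate condition the possible values of each $\dot f(n)$ are decided along an antichain of a single ($\kappa$-c.c.) copy of $\mathbb{B}$, hence form a set of size $<\kappa$; one then tries to capture these countably many names and take suprema to confine the range of $\dot f$ below $\kappa$, contradicting cofinality. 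The delicate point — and where the real work lies — is to check that the bounds obtained in the first-coordinate extension genuinely stay below $\kappa$ rather than climbing cofinally; this is exactly where the regularity of $\kappa$ and $\cf \kappa > \omega$ must be used, and where mere $\kappa$-c.c. of a single factor has to be pushed to control the product. Once $\mathbb{B} \times \mathbb{B} \Vdash \cf \kappa \geq \omega_1$ is secured, Lemma~\ref{lemma: preserving the coherent sequence} yields that no new thread is added, and together with the absoluteness of clauses (1) and (2) this shows that $\mathcal{C}$ is still a $\square(\kappa)$-sequence in $W^{\mathbb{B}}$.
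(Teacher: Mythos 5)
You are right that clauses (1) and (2) of the definition are absolute and that, since $\mathcal{C}$ has no thread in $W$, it suffices to show that $\mathbb{B}$ adds no new thread; up to that point your reduction is sound. The gap is precisely the step you flag as the ``hard part'': the hypothesis of Lemma~\ref{lemma: preserving the coherent sequence}, namely $\mathbb{B}\times\mathbb{B}\Vdash \cf\kappa\geq\omega_1$, is \emph{not} a consequence of $\mathbb{B}$ being $\kappa$-c.c., so no elaboration of your sketched verification can close it. Concretely, for $\kappa=\omega_1$: if $T$ is a Suslin tree, let $\mathbb{B}$ be the lottery sum of $T$ (as a forcing notion) with the finite-condition specialization forcing of $T$. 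Both summands are ccc, hence $\mathbb{B}$ is ccc; but below a condition of $\mathbb{B}\times\mathbb{B}$ whose first coordinate lies in the copy of $T$ and whose second coordinate lies in the specializing forcing, the product adds simultaneously a cofinal branch of $T$ and a specializing function $T\to\omega$, which is injective on that branch, so $\omega_1$ is collapsed and $\cf\kappa=\omega$ in the extension. Since the existence of a Suslin tree is consistent, the implication you need is not provable in ZFC. This also pinpoints where your sketched argument breaks: to decide $\dot f(n)$ one must extend \emph{both} coordinates, so for a fixed first-coordinate condition the possible values of $\dot f(n)$ are not read off an antichain of a single copy of $\mathbb{B}$; equivalently, after forcing with the first copy, the second copy need not be $\kappa$-c.c.\ any longer, and the set of possible values of $\dot f(n)$ can have size $\kappa$.

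The lemma itself is true, but the correct proof does not go through products at all; it uses the chain condition of the single forcing $\mathbb{B}$, and this is what the paper does. If $\dot D$ is a $\mathbb{B}$-name for a thread, then by the $\kappa$-c.c.\ and the regularity of $\kappa$ there is a club $E\in W$ such that $\Vdash_{\mathbb{B}}\check E\subseteq\dot D$. For any $\alpha<\beta$ in $\acc E$ it is forced that $\alpha,\beta\in\acc\dot D$, hence that $\dot D\cap\check\alpha=\check C_\alpha$ and $\dot D\cap\check\beta=\check C_\beta$; since $C_\alpha$ and $C_\beta$ are ground-model sets, the forced identity $C_\beta\cap\alpha=C_\alpha$ already holds in $W$. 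Thus $\langle C_\alpha\mid\alpha\in\acc E\rangle$ is increasing and coherent, and $T=\bigcup_{\alpha\in\acc E}C_\alpha$ is a thread for $\mathcal{C}$ lying in $W$ itself, contradicting that $\mathcal{C}$ is a $\square(\kappa)$-sequence there. Replacing your appeal to Lemma~\ref{lemma: preserving the coherent sequence} by this direct argument repairs the proof.
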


\begin{proof}
Clearly, $\mathcal{C}$ remains coherent in $W^{\mathbb{B}}$ so the only think that we need to verify is that $\mathcal{C}$ still does not have a thread. 
Assume otherwise, and let $\dot{D}$ be a name for the thread and denote $\mathcal{C} = \langle C_\alpha \mid \alpha < \kappa\rangle$.
By the chain condition, there is a club $E\in W$ such that $\Vdash_{\mathbb{B}} \check{E}\subseteq\dot{D}$. But for every $\alpha < \beta$ in $\acc E$, $\Vdash \alpha, \beta \in \acc \dot{D}$ and therefore $\Vdash \dot{D}\cap \check\alpha = \check{C_\alpha}$, $\Vdash \dot{D}\cap \check\beta = \check{C_\beta}$ and in particular, it is forced that $C_\alpha \trianglelefteq C_\beta$, so $T = \bigcup_{\alpha \in \acc E} C_\alpha$ is a thread.    
\end{proof}

\begin{coroll}\label{coroll: existence of square sequence}
Let $\mathcal{C}$ be the generic square sequence added by $\mathbb{S}$. Then $\mathcal{C}$ remains a square sequence even after forcing with $\mathbb{R}.$  
\end{coroll}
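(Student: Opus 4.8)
The plan is to verify that $\mathcal{C}$ satisfies the two requirements of a $\square(\kappa)$-sequence in $W^{\mathbb{S}\ast\mathbb{R}}$: coherence and the absence of a thread. Coherence is immediate, since clauses (1) and (2) in the definition of $\square(\kappa)$ are absolute properties of the sequence $\mathcal{C}$ itself and are unaffected by passing to an outer model. The whole content is therefore to show that $\mathbb{R}$ adds no thread through $\mathcal{C}$ over $W^\mathbb{S}$. Since $\mathbb{R}$ is only $\kappa^+$-c.c.\ and not $\kappa$-c.c., Lemma~\ref{lemma: preserving the square sequence} does not apply, so instead I would invoke Lemma~\ref{lemma: preserving the coherent sequence} with $\mathbb{B}=\mathbb{R}$ (computed in $W^\mathbb{S}$). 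Its hypothesis requires that $\mathbb{R}\times\mathbb{R}\force\cf\kappa\geq\omega_1$, and establishing this is the heart of the argument.

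To prove $\mathbb{R}\times\mathbb{R}\force\cf\kappa\geq\omega_1$ over $W^\mathbb{S}$, I would exploit that $\mathbb{T}\in W^\mathbb{S}$, so that over $W^\mathbb{S}$ the two-step iteration $\mathbb{R}\ast\mathbb{T}$ is simply the product $\mathbb{R}\times\mathbb{T}$, and likewise $\mathbb{R}\times\mathbb{T}\times\mathbb{R}$ is a genuine product of posets lying in $W^\mathbb{S}$ whose factors may be reordered freely. Suppose toward a contradiction that some condition of $\mathbb{R}\times\mathbb{R}$ forces $\cf\kappa=\omega$, and let $G_0\times H_0\times G_1$ be generic for $\mathbb{R}\times\mathbb{T}\times\mathbb{R}$ with $G_0\times G_1$ extending that condition. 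On the one hand, $G_0\times G_1$ is $(\mathbb{R}\times\mathbb{R})$-generic over $W^\mathbb{S}$, so $\cf\kappa=\omega$ already in $W^\mathbb{S}[G_0\times G_1]$, and a fortiori in the larger model $W^\mathbb{S}[G_0\times H_0\times G_1]$. On the other hand, reordering the factors gives $W^\mathbb{S}[G_0\times H_0\times G_1]=W^{\mathbb{S}\ast\mathbb{T}}[G_0\times G_1]$, where now $G_0\times G_1$ is $(\mathbb{R}\times\mathbb{R})$-generic over $W^{\mathbb{S}\ast\mathbb{T}}$ by mutual genericity. By Corollary~\ref{coroll: directed closure Cfin}(3), $\mathbb{R}$ has a $\kappa$-closed dense subset in $W^{\mathbb{S}\ast\mathbb{T}}$, hence so does $\mathbb{R}\times\mathbb{R}$, as a product of $\kappa$-closed posets is $\kappa$-closed. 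Consequently $\mathbb{R}\times\mathbb{R}$ is $\kappa$-distributive over $W^{\mathbb{S}\ast\mathbb{T}}$ and preserves the regularity of $\kappa$, so $\cf\kappa=\kappa$ in $W^\mathbb{S}[G_0\times H_0\times G_1]$, contradicting $\cf\kappa=\omega$. This yields $\mathbb{R}\times\mathbb{R}\force\cf\kappa=\kappa\geq\omega_1$.

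With this in hand, Lemma~\ref{lemma: preserving the coherent sequence} shows that $\mathbb{R}$ adds no new thread to $\mathcal{C}$; since $\mathcal{C}$ has no thread already in $W^\mathbb{S}$ (there $\mathcal{C}$ is a $\square(\kappa)$-sequence), it has none in $W^{\mathbb{S}\ast\mathbb{R}}$, and together with coherence this makes $\mathcal{C}$ a $\square(\kappa)$-sequence in $W^{\mathbb{S}\ast\mathbb{R}}$.

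The main obstacle I anticipate is the bookkeeping around the product reorganization: one must be careful that $\mathbb{T}$ genuinely lies in $W^\mathbb{S}$ (so that the threefold product is well defined and its factors commute), and that the projected filter $G_0\times G_1$ is mutually generic both over $W^\mathbb{S}$ and over $W^{\mathbb{S}\ast\mathbb{T}}$, so that the single model $W^\mathbb{S}[G_0\times H_0\times G_1]$ is legitimately presented in the two ways above. A secondary point to check is that $W^{\mathbb{S}\ast\mathbb{T}}$ already sees $\kappa$ as regular, which follows from the $\kappa$-closed dense subset of $\mathbb{S}\ast\mathbb{T}$ furnished by the first lemma of this subsection.
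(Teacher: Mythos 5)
Your proposal is correct and takes essentially the same approach as the paper: both use Corollary~\ref{coroll: directed closure Cfin} to get a $\kappa$-closed dense subset of $\mathbb{R}$ (hence of $\mathbb{R}\times\mathbb{R}$) in $W^{\mathbb{S}\ast\mathbb{T}}$, transfer this down to conclude that $\mathbb{R}\times\mathbb{R}$ forces $\cf\kappa=\kappa>\omega$ over $W^{\mathbb{S}}$, and then apply Lemma~\ref{lemma: preserving the coherent sequence}. Your three-fold product and mutual-genericity argument simply spells out in detail the step the paper compresses into ``it does not change the cofinality of $\kappa$ in $W^{\mathbb{S}\ast\mathbb{T}}$ and, in particular, it does not change the cofinality of $\kappa$ in $W^{\mathbb{S}}$.''
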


\begin{proof}
By Corollary~\ref{coroll: directed closure Cfin}, in $W^{\mathbb{S}\ast\mathbb{T}}$, $\mathbb{R}$ contains a $\kappa$-closed dense subset (just take the set of conditions from the dense set $D_{\kappa^+}$ such that their first coordinates are in the generic filter for $\mathbb{S}\ast\mathbb{T}$). In particular, $\mathbb{R}\times \mathbb{R}$ is equivalent to a $\kappa$-closed forcing, hence it does not change the cofinality of $\kappa$ in $W^{\mathbb{S\ast T}}$ and, in particular, it does not change the cofinality of $\kappa$ in $W^{\mathbb{S}}$. 

We conclude that in $W^{\mathbb{S}}$, $\mathbb{R}\times\mathbb{R}\Vdash \cf \kappa = \kappa > \omega$, and therefore by Lemma~\ref{lemma: preserving the coherent sequence}, it cannot add a new thread for $\mathcal{C}$. 
\end{proof}

We showed that $\mathbb{T}$ generically preserves stationary subsets of $\kappa$ in $W^{\mathbb{S}\ast\mathbb{R}}.$ 
It is useful to show that generically preservation of stationary sets of $\kappa$ is preserved under $\kappa.$-c.c.\ forcing notions. We will use such result in the proof of the main theorem

\begin{lemma}\label{lem: c.c. preserves generically stationary preservation}
Let $V_0$ be a model of \ZFC, let $\mathbb{T}\in V_0$ be a forcing notion that generically preserves any stationary subset of $\kappa,$ and let $\mathbb{B}\in V_0$ be a forcing notion such that $\Vdash_{\mathbb{T}} \check{\mathbb{B}}$ is $\check\kappa$-c.c. 
Then $$V_0^{\mathbb{B}}\models\ \mathbb{T}\textrm{ generically preserves stationary subsets of $\kappa$}.$$   
\end{lemma}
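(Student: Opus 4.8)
The plan is to argue by contradiction, reducing the failure of generic preservation in $V_0^{\mathbb{B}}$ to the failure of generic preservation in $V_0$ itself. First I would observe that since $\mathbb{T}\in V_0$ and $\mathbb{B}\in V_0$, the two-step iteration $\mathbb{B}\ast\mathbb{T}$ is just the product $\mathbb{B}\times\mathbb{T}$, which is isomorphic to $\mathbb{T}\times\mathbb{B}$; hence a $\mathbb{B}$-name may be read as a $\mathbb{T}\times\mathbb{B}$-name and forcing statements transfer freely between the two orders of forcing. I also note that, since every antichain of $\mathbb{B}$ in $V_0$ is also an antichain of $\mathbb{B}$ in $V_0^{\mathbb{T}}$, the hypothesis $\Vdash_{\mathbb{T}}\check{\mathbb{B}}$ is $\check\kappa$-c.c.\ already gives that $\mathbb{B}$ is $\kappa$-c.c.\ in $V_0$; in particular $\mathbb{B}$ preserves clubs and stationary subsets of $\kappa$ in $V_0$.

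Now suppose for contradiction that generic preservation fails in $V_0^{\mathbb{B}}$. Unwinding the definition, and using that ``no condition forces $\dot S$ stationary'' is equivalent to ``$1_{\mathbb{T}}$ forces $\dot S$ non-stationary'', this is witnessed by a condition $b\in\mathbb{B}$ and a $\mathbb{B}$-name $\dot S$ with $b\Vdash_{\mathbb{B}}(\dot S$ is stationary and $1_{\mathbb{T}}\Vdash_{\mathbb{T}}\dot S$ is non-stationary$)$. The heart of the argument is to attach to $\dot S$ the $V_0$-set of its possible elements, $T_0=\{\alpha<\kappa\mid\exists b'\leq b,\ b'\Vdash_{\mathbb{B}}\check\alpha\in\dot S\}$. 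Since $b\Vdash_{\mathbb{B}}\dot S$ is stationary and ground-model clubs remain clubs (by the $\kappa$-c.c.\ of $\mathbb{B}$ in $V_0$), a density argument shows that $T_0$ meets every club of $V_0$, so $T_0$ is stationary in $V_0$.

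Next I would show that, in contrast, $1_{\mathbb{T}}\Vdash_{\mathbb{T}}\check{T_0}$ is non-stationary. Fix any $\mathbb{T}$-generic $H$ over $V_0$. By the commutativity of the product, the clause $b\Vdash_{\mathbb{B}}(1_{\mathbb{T}}\Vdash_{\mathbb{T}}\dot S$ non-stationary$)$ translates into $1_{\mathbb{T}}\Vdash_{\mathbb{T}}(b\Vdash_{\mathbb{B}}\dot S$ non-stationary$)$, so in $V_0[H]$ the condition $b$ forces $\dot S$ to be non-stationary. Since $\mathbb{B}$ is $\kappa$-c.c.\ in $V_0[H]$, the standard club-capturing argument produces a genuine club $C\in V_0[H]$ with $b\Vdash_{\mathbb{B}}C\cap\dot S=\emptyset$. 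The key point is then that this same club already kills $T_0$: if $\alpha\in C$ then $b\Vdash_{\mathbb{B}}\check\alpha\notin\dot S$, so no $b'\leq b$ forces $\check\alpha\in\dot S$, whence $\alpha\notin T_0$. Thus $T_0\cap C=\emptyset$ and $T_0$ is non-stationary in $V_0[H]$. As $H$ was arbitrary, $1_{\mathbb{T}}\Vdash_{\mathbb{T}}\check{T_0}$ is non-stationary, directly contradicting the generic preservation of stationary subsets of $\kappa$ by $\mathbb{T}$ in $V_0$ applied to the stationary set $T_0$.

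The main obstacle, which dictates the whole strategy, is that one cannot transfer the preservation hypothesis to $\dot S$ directly: $\dot S$ is not a $V_0$-set, and even its natural $V_0$-approximation $T_0$ contains $\dot S$ as a possibly non-stationary subset, so preserving the stationarity of $T_0$ would say nothing about $\dot S$ on its own. The trick that makes it work is the observation that the club $C$ witnessing the non-stationarity of $\dot S$ \emph{after} the $\mathbb{T}$-forcing is in fact disjoint from \emph{all} of $T_0$; hence it is $T_0$ itself --- a genuine $V_0$-stationary set --- that is destroyed by $\mathbb{T}$, and it is to $T_0$ that the hypothesis is applied. Verifying the club-capturing step and the absoluteness of the computation of $T_0$ between $V_0$ and $V_0[H]$ are routine once this setup is in place.
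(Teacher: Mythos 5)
Your proof is correct and follows essentially the same route as the paper's: your $T_0$ is precisely the paper's covering set $E^\star=\{\alpha<\kappa \mid \exists q\in\mathbb{B},\ q\Vdash \check\alpha\in\dot E\}$, the club-capturing step uses the forced $\kappa$-c.c.\ of $\mathbb{B}$ after $\mathbb{T}$ exactly as the paper does to obtain a club on the $\mathbb{T}$-side disjoint from that covering set, and the contradiction comes from applying generic preservation in $V_0$ to it. The only cosmetic differences are that you work below a condition $b$ and with a fixed $\mathbb{T}$-generic $H$ rather than a $\mathbb{T}$-name $\dot D$ for the club, and you organize the final contradiction as ``$T_0$ is stationary in $V_0$ yet forced non-stationary by $1_{\mathbb{T}}$'' instead of the paper's ``generic preservation makes $E^\star$ non-stationary, hence $\dot E$ is forced non-stationary.''
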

\begin{proof}

Work in $V_0,$ let $\dot E$ be a $\mathbb{B}$-name for a stationary subset of $\kappa$, and let $H$ be a generic filter for $\mathbb{B}$. Assume by contradiction that the maximal condition of $\mathbb{T}$ forces (over $W$) that $\dot E^H$ (the realization of $\dot E$ according to the generic filter $H$) is non-stationary, and let $\dot C$ be a $\mathbb{B}\times \mathbb{T}$-name for a club disjoint from $\dot E$. Since $\mathbb{B}\times \mathbb{T} \cong \mathbb{T}\times \mathbb{B}$ and $\mathbb{B}$ is forced to be $\kappa$-c.c.\ after $\mathbb{T}$, we can find also a $\mathbb{T}$-name for a club $\dot D$ that is forced (by the maximal condition of $\mathbb{B}\times \mathbb{T}$) to be a sub-club of $\dot C$. 

We define a fake version of $\dot E$ by letting $E^\star = \{\alpha < \kappa \mid \exists q\in \mathbb{B},\,q\Vdash \alpha \in \dot E\}$. Clearly, $E^\star$ is in the ground model and it is forced to cover $\dot E$. Moreover, $\dot D$ is forced (by the maximal condition) to be disjoint from $E^\star$, as otherwise, there is some $t\in T$ and $\alpha \in E^\star$ such that $t\Vdash \check \alpha \in \dot D$ and by the definition of $E^\star$ there is also some $q\in \mathbb{B}$ that forces $\alpha \in \dot E$. But then $\langle q, t\rangle \Vdash_{\mathbb{B}\times \mathbb{T}} \check \alpha \in \dot E \cap \dot D \subseteq \dot E \cap \dot C = \emptyset.$

Now we may apply the property of $\mathbb{T}$ in the ground model and conclude that $E^\star$ is non-stationary in the ground model, and since it covers $\dot E$ we conclude that $\dot E$ is forced to be non-stationary. \end{proof}

The following technical lemma will enables us to control the distributivity of $\mathbb{T}$ after some forcing extensions. Recall that, given a forcing notion $\mathbb{Q}$ and a $\mathbb{Q}$-name $\dot{A}$ for a subset of an ordinal $\alpha,$ we say that $\dot{A}$ is a \emph{fresh subset} of $\alpha$ if $1_\mathbb{Q}\force \dot{A}\notin V$ and, for every $\beta<\alpha,$ 
$1_\mathbb{Q}\force \dot{A}\cap \beta\in V.$

\begin{lemma} Let $\mathbb{T}$ be as above and let $\mathbb{B}$ be a forcing notion such that $\mathbb{T}\times \mathbb{B}$ does not change the cofinality of $\kappa$ then $\Vdash_{\mathbb{B}} \check{\mathbb{T}}$ is $\kappa$-distributive. 
\end{lemma}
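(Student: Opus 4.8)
The plan is to argue by contradiction, using the coherence of the square sequence $\mathcal{C}$ together with the cofinality hypothesis to \emph{localize} any newly added short sequence into a bounded, ground-model initial segment of the generic thread. First I would set the stage. Since $\mathbb{T}$ and $\mathbb{B}$ both lie in $W^{\mathbb{S}}$, the two-step iteration $\mathbb{B}\ast\check{\mathbb{T}}$ is just the product $\mathbb{B}\times\mathbb{T}\cong\mathbb{T}\times\mathbb{B}$, and $\mathcal{C}\in W^{\mathbb{S}}$. Suppose toward a contradiction that $\Vdash_{\mathbb{B}}\check{\mathbb{T}}$ is not $\kappa$-distributive. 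Then some condition of $\mathbb{B}$ forces that $\mathbb{T}$ adds a new sequence of ordinals of some length $\gamma<\kappa$; fixing a generic $G_{\mathbb{B}}$ through that condition and a generic $G_{\mathbb{T}}$ for $\mathbb{T}$ over $W^{\mathbb{S}}[G_{\mathbb{B}}]$ with associated thread $D=\bigcup G_{\mathbb{T}}$, we obtain $s\colon\gamma\to\mathrm{Ord}$ with $s\in W^{\mathbb{S}}[G_{\mathbb{B}}][G_{\mathbb{T}}]$ but $s\notin W^{\mathbb{S}}[G_{\mathbb{B}}]$.

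Next I would localize $s$. For each $\xi<\gamma$ the value $s(\xi)$ is decided by some condition in $G_{\mathbb{T}}$; since every condition of $G_{\mathbb{T}}$ is an initial segment $C_\beta=D\cap\beta$ of the thread with $\beta\in\acc D$, let $\beta_\xi\in\acc D$ be least such that $C_{\beta_\xi}$ decides $s(\xi)$, and set $\delta^\ast=\sup_{\xi<\gamma}\beta_\xi$. This is the crucial point where the hypothesis enters: the model $W^{\mathbb{S}}[G_{\mathbb{B}}][G_{\mathbb{T}}]$ is exactly the $\mathbb{T}\times\mathbb{B}$-extension of $W^{\mathbb{S}}$, so by assumption $\cf\kappa=\kappa$ there; as $\delta^\ast$ is a supremum of $\gamma<\kappa$ many ordinals below the regular cardinal $\kappa$, we conclude $\delta^\ast<\kappa$. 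I would then pick $\delta\in\acc D$ with $\delta>\delta^\ast$, so that by the very definition of a thread $D\cap\delta=C_\delta$, a member of the square sequence $\mathcal{C}\in W^{\mathbb{S}}$.

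Finally I would derive the contradiction by recomputing $s$ inside $W^{\mathbb{S}}[G_{\mathbb{B}}]$ from the single ground-model parameter $C_\delta$. From $C_\delta$ one recovers every condition $C_\beta=C_\delta\cap\beta$ for $\beta\in\acc C_\delta=\acc D\cap\delta$, and all the $C_{\beta_\xi}$ appear among these since $\beta_\xi\le\delta^\ast<\delta$. Working with the forcing relation $\Vdash_{\mathbb{T}}$, which is computed in $W^{\mathbb{S}}[G_{\mathbb{B}}]$ (where both $\mathbb{T}$ and the name for $s$ live), one identifies $\beta_\xi$ as the least $\beta\in\acc C_\delta$ whose condition $C_\delta\cap\beta$ decides the $\xi$-th value, and reads off $s(\xi)$ as the forced value; since stronger conditions in the filter agree, this reconstruction returns the true $s$. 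Hence $s$ is definable in $W^{\mathbb{S}}[G_{\mathbb{B}}]$ from $C_\delta\in W^{\mathbb{S}}$, so $s\in W^{\mathbb{S}}[G_{\mathbb{B}}]$, contradicting the choice of $s$. The main obstacle is precisely securing $\delta^\ast<\kappa$: this is the sole place the cofinality hypothesis is used, and without it the witnessing levels $\beta_\xi$ could be cofinal in $\kappa$, so that $s$ would be coded by the entire thread $D$ rather than by a bounded initial segment lying in the ground model, and the localization would fail. (Taking $\gamma$ least makes $s$ a fresh sequence, but the argument in fact shows the whole of $s$ belongs to $W^{\mathbb{S}}[G_{\mathbb{B}}]$, so freshness is not separately needed.)
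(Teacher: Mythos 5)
Your proof is correct and takes essentially the same approach as the paper's: decide each value of the putative new sequence by a bounded initial segment of the generic thread, use the hypothesis (regularity of $\kappa$ in the $\mathbb{T}\times\mathbb{B}$ extension) to bound these decision points below $\kappa$, and conclude that the whole sequence is computable in $W^{\mathbb{B}}$ from a single ground-model condition. Your extra step of passing to $\delta\in\acc D$ above the supremum, so that $D\cap\delta=C_\delta$ is literally a condition, cleans up a detail the paper glosses over (its ``$G\cap\beta$ is a condition'' is only accurate when $\beta$ is an accumulation point of the thread), but the argument is the same.
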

\begin{proof}
Work in $W^\mathbb{B}.$ Suppose by contradiction that there is a $\mathbb{T}$-name $\dot \tau$ for a new subset of ordinals of cardinality $\eta < \kappa$. Let $G$ be a generic filter for $\mathbb{T}$. We will think of $G$ as a fresh subset of $\kappa$ (namely, the generic thread). Since $G$ is generic, it realizes $\dot \tau$. For every $\alpha < \eta$, let $\beta_\alpha < \kappa$ be an ordinal such that the condition $G\cap \beta_\alpha$ decides the value of $\dot \tau (\alpha)$. By the regularity of $\kappa$ in the generic extension there is a bound $\beta < \kappa$, $\beta \geq \beta_\alpha$ for every $\alpha < \eta$. But then $G\cap \beta$ (which is a condition in $\mathbb{T}$) already decided all the values of $\dot \tau$, so it forced that $\tau \in W$.    
\end{proof}


We can already prove that forcing with $\mathbb{T}$ will have no effect on the Delta reflection at $\lambda^+$. 

\begin{lemma}\label{lem: avoiding T}
Let $\mathbb{T}$ be as above - a $\kappa$-distributive forcing that generically preserves stationary sets at $\kappa$ and let $\mu < \kappa$. If $\mathbb{T}$ forces with the maximal condition that $\Delta_{\mu, \kappa}$ holds, then it holds in the ground model as well.
\end{lemma}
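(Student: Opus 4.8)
The plan is to argue by contradiction, exploiting the two hypotheses on $\mathbb{T}$ in complementary ways. Suppose $\Delta_{\mu,\kappa}$ fails in the ground model $W$ and fix a counterexample: a cardinal $\nu<\mu$, a stationary set $S\subseteq E^{\kappa}_{<\mu}$, and an algebra $A$ on $\kappa$ with $\nu$ operations such that no subalgebra of $A$ of order type a regular cardinal below $\mu$ has $S$ stationary in its supremum. I will produce a single $\mathbb{T}$-generic extension in which $(\nu,S,A)$ is still a counterexample, contradicting $1_{\mathbb{T}}\force\Delta_{\mu,\kappa}$.

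First I would record what $\kappa$-distributivity provides. Since $\mathbb{T}$ adds no new sequence of ordinals of length $<\kappa$, it preserves all cardinals and cofinalities $\le\kappa$; in particular $\kappa$ remains regular, $\mu$ remains a cardinal, and $\cf(\alpha)$ is unchanged for every $\alpha<\kappa$. Hence $E^{\kappa}_{<\mu}$ is computed identically in $W$ and in any $\mathbb{T}$-extension, so $S\subseteq E^{\kappa}_{<\mu}$ there as well. Moreover every subset of $\kappa$ of size $<\kappa$ that appears in an extension already lies in $W$ (decode it from its increasing enumeration, a short sequence of ordinals); so any subalgebra $A'$ of $A$ of order type $\delta<\mu$ occurring in an extension belongs to $W$, has the same order type $\delta$ there, and --- since $\delta<\mu<\kappa$ and $\kappa$ is regular --- is bounded in $\kappa$, with $\delta$ regular in $W$ iff it is regular in the extension.

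Now I would use generic preservation. By hypothesis there is $q\in\mathbb{T}$ with $q\force\check S$ is stationary; let $G$ be generic with $q\in G$. In $W[G]$ the set $S$ is stationary in $\kappa$, still contained in $E^{\kappa}_{<\mu}$, $A$ is still an algebra on $\kappa$ with $\nu$ operations, and $\Delta_{\mu,\kappa}$ holds. Applying it to $(\nu,S,A)$ in $W[G]$ yields a subalgebra $A'$ of order type a regular cardinal $\delta<\mu$ with $S\cap A'$ stationary in $\gamma:=\sup(A')$, where $\gamma<\kappa$. By the previous paragraph $A'\in W$, $A'$ is a subalgebra of $A$ in $W$ (closure under the ground-model operations is absolute), and $\delta$ is a regular cardinal below $\mu$ in $W$. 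Finally, stationarity in the fixed ordinal $\gamma$ is downward absolute: every club of $\gamma$ in $W$ remains closed and unbounded in $\gamma$ in $W[G]$, so a set meeting every club of $W[G]$ meets every club of $W$; thus $S\cap A'$ is stationary in $\gamma$ already in $W$. This $A'$ witnesses $\Delta_{\mu,\kappa}$ for $(\nu,S,A)$ in $W$, contradicting the choice of counterexample.

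The load-bearing step is the interplay of the two hypotheses: distributivity is what guarantees that the reflecting subalgebra found upstairs is an honest ground-model object of the correct order type, while generic preservation is precisely what lets us move to an extension in which $S$ survives as a stationary set --- under an arbitrary generic $S$ could be killed, and then $\Delta_{\mu,\kappa}$ in the extension would say nothing about it. Everything else (preservation of cofinalities below $\kappa$ and downward absoluteness of stationarity in $\gamma$) is routine; the only point to state with care is that $\gamma<\kappa$, so that $A'$ and the clubs of $\gamma$ are short enough for distributivity and absoluteness to apply.
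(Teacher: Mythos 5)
Your proposal is correct and is essentially the paper's own argument: use generic preservation to find a condition forcing $S$ to remain stationary, apply $\Delta_{\mu,\kappa}$ in that extension, pull the reflecting subalgebra back to the ground model via $\kappa$-distributivity, and finish with absoluteness of subalgebra-hood together with downward absoluteness of regularity and of stationarity below $\kappa$. The only cosmetic difference is that you wrap this in a proof by contradiction while the paper argues directly (fix any $S$, $A$ in the ground model and produce a witness), and you spell out the routine preservation facts (cofinalities, $E^{\kappa}_{<\mu}$, boundedness of the subalgebra) in more detail.
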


\begin{proof}
Let us pick in the ground model a stationary subset of $\kappa$, $S$ and an algebra $A$ with less than $\mu$ operations. We want to find a sub-algebra $B$ of $A$ such that $B$ as order type a regular cardinal below $\mu$ and $S\cap B$ is stationary at $\sup B.$ We know that $\Delta_{\mu,\kappa}$ holds after forcing with $\mathbb{T}.$ Let $t\in\mathbb{T}$ be a condition that forces that $S$ is still stationary. So a generic filter that contains $t$ must introduce a sub-algebra $B$ with regular order type such that $B \cap S$ is stationary at $\sup B$. Since $\mathbb{T}$ is $\kappa$-distributive $B$ appears also in the ground model. The fact that $B$ is a sub-algebra is absolute. The regularity of its order type as well as the stationarity of $S\cap B$ is downward absolute, so since it holds in the generic extension, it must hold in the ground model as well.     
\end{proof}

We will use Lemma \ref{lem: avoiding T} with two forcing notions: $\mathbb{P}^*$ and ${\mathbb{C}_n}/{\mathbb{C}_{fin}},$ where $\mathbb{P}^*$ will be defined in the subsection~\ref{subsec: prikry forcing}. Both forcings are defined in $V^{\mathbb{C}_{fin}}$ and we will prove that they are $\lambda^+$-c.c.\ in this model. This will be enough as the iteration $\mathbb{S}\ast\mathbb{R}\ast\mathbb{T}$ contains a $\lambda^+$-closed dense set and therefore it cannot introduce an antichain of size $\lambda^+$ to a $\lambda^+$-c.c.\ forcing notion. This means that those forcing notions will have the desired chain condition after forcing with $\mathbb{T}$ so $\mathbb{T}$ is distributive and generically stationary preserving also after them. 

In the process of defining the forcing notion that will introduce the $\Delta$-reflection principle at $\aleph_{\omega^2 + 1}$ we will need to pick normal ultrafilters from $V$ that are projections of ultrafilters in the generic extension. We need this choice to be independent on the generic filters. 

\begin{lemma}\label{lemma: SRT weakly homogeneous} $\mathbb{S}\ast \mathbb{R}\ast \mathbb{T}$ is weakly homogeneous.
\end{lemma}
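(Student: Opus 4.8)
The plan is to reduce to the reordered iteration $\mathbb{S}\ast\mathbb{T}\ast\mathbb{R}$, using the isomorphism $\mathbb{S}\ast\mathbb{R}\ast\mathbb{T}\cong\mathbb{S}\ast\mathbb{T}\ast\mathbb{R}$ already noted at the start of the proof of Lemma~\ref{lemma: SRalphaT contains a kappa closed dense subset} (valid since $\mathbb{T}\in W^{\mathbb{S}}$), and to prove weak homogeneity of this reordering, treating $\mathbb{S}\ast\mathbb{T}$ as a first factor and $\mathbb{R}$ as the tail. Since the only use of the lemma is to make the choice of ultrafilters generic-independent, it is enough to establish weak homogeneity at the level of the Boolean completion (equivalently, of a convenient $\kappa$-closed dense subset): I would produce, for any two conditions $p,q$ of the dense subset, an automorphism $\pi$ with $\pi(p)\parallel q$.

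For the first factor I would work with the $\kappa$-closed dense subset $\mathcal{D}\subseteq\mathbb{S}\ast\mathbb{T}$ isolated above. In $W$ we have $2^{<\kappa}=\kappa$, and $\mathcal{D}$ is separative, atomless, $\kappa$-closed, of size $\kappa$ (a bounded approximation is coded by a sequence of clubs of length $<\kappa$, and $\sum_{\gamma<\kappa}2^{|\gamma|}=\kappa$), and every condition has two incompatible extensions. By the standard classification of such forcings it is forcing equivalent to $\Add(\kappa,1)$, which is weakly homogeneous via the bit-flipping automorphisms; hence so is $\mathbb{S}\ast\mathbb{T}$.

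For the tail I would pass to $W^{\mathbb{S}\ast\mathbb{T}}$, where by Corollary~\ref{coroll: directed closure Cfin}(3) the forcing $\mathbb{R}$ contains a $\kappa$-closed dense subset. The crucial feature is that the \emph{definition} of $\mathbb{R}$ is generic-independent: the iterand $\dot{\mathbb{Q}}_\alpha$ is determined by the fixed bookkeeping $f$, the fixed enumeration of $\mathbb{R}_\beta$-names, and the purely ground-model criterion ``$\langle 1,1_{\mathbb{T}}\rangle$ forces $\dot{A}$ non-stationary''. Moreover, by its very choice each target $\dot{A}_\alpha$ is non-stationary in $W^{\mathbb{S}\ast\mathbb{T}\ast\mathbb{R}_\alpha}$, so its complement contains a club and the club-shooting iterand carries the same $\Add(\kappa,1)$-type family of automorphisms (this is exactly why the paper adds a Cohen subset of $\kappa$ at cofinally many stages). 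I would then build the required automorphism of the iteration by recursion on $\alpha\le\kappa^+$: given $p,q$, at stage $\alpha$ choose a factor automorphism $\sigma_\alpha$ of $\dot{\mathbb{Q}}_\alpha$ forcing $\pi(p)(\alpha)\parallel q(\alpha)$, and amalgamate the $\sigma_\alpha$ into a single $\pi$. This is the standard scheme for showing that a ``kill the fragile sets'' iteration is homogeneous, as in \cite[Section 10]{Magidor-Cummings-Foreman-Squares}.

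The main obstacle will be the amalgamation step. For $\sigma_\alpha$ to be applied as a name, the lower automorphism $\pi\restr\alpha$ must send the $\mathbb{R}_\alpha$-name $\dot{\mathbb{Q}}_\alpha$ (equivalently $\dot{A}_\alpha$) to itself, so that the factor automorphisms cohere into an automorphism of the whole iteration. This is precisely where the symmetric, generic-independent definition of $\mathbb{R}$ is essential: since $\dot{A}_\alpha$ is selected by a criterion mentioning only the maximal condition of $\mathbb{T}$ and fixed combinatorial data, it is invariant under the automorphisms generated at earlier stages. Checking this invariance carefully, so that the recursion never disturbs the bookkeeping, is the technical heart of the argument; once it is in place, the weak homogeneity of $\mathbb{S}\ast\mathbb{T}$ together with the uniform weak homogeneity of the tail $\mathbb{R}$ yields weak homogeneity of $\mathbb{S}\ast\mathbb{T}\ast\mathbb{R}$, and hence of $\mathbb{S}\ast\mathbb{R}\ast\mathbb{T}$.
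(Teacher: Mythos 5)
Your handling of the first factor is acceptable, and is a genuine alternative to the paper's argument: instead of classifying the $\kappa$-closed dense set $\mathcal{D}\subseteq\mathbb{S}\ast\mathbb{T}$ as $\Add(\kappa,1)$ by the folklore theorem (which does apply here, since $2^{<\kappa}=\kappa$ implies $\kappa^{<\kappa}=\kappa$ and $\mathcal{D}$ is separative, atomless, $\kappa$-closed and of size $\kappa$), the paper builds an explicit isomorphism between cones below two conditions of $\mathcal{D}$ by swapping initial segments, and then extends it to the Boolean completion via Vop\v{e}nka--H\'ajek \cite{VopenkaHajek}. Either way one only gets homogeneity of the completion, which, as you note, suffices for the intended application.

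The gap is in your treatment of the tail $\mathbb{R}$, and it is exactly at the point you call the technical heart. Your assertion that ``since $\dot{A}_\alpha$ is selected by a criterion mentioning only the maximal condition of $\mathbb{T}$ and fixed combinatorial data, it is invariant under the automorphisms generated at earlier stages'' is false. What the generic-independence of the selection criterion actually gives is only that the \emph{class} of qualifying names is closed under the base automorphism $\pi$: since $\pi$ fixes the maximal condition, $\pi(\dot{A}_\alpha)$ is again forced by $\langle 1_{\mathbb{S}\ast\mathbb{R}_\alpha},1_{\mathbb{T}}\rangle$ to be non-stationary. But $\pi(\dot{A}_\alpha)=\{(\pi(u),\check\gamma)\mid u\force\check\gamma\in\dot{A}_\alpha\}$ is in general a \emph{different} name from $\dot{A}_\alpha$: any name whose realization genuinely depends on the $\mathbb{S}\ast\mathbb{T}$-generic is moved by an automorphism that alters initial segments of the square sequence. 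Consequently $\pi(\dot{A}_\alpha)=\dot{A}_\beta$ for some other index $\beta$, and your fixed-coordinate map sends a condition whose $\alpha$-th coordinate is a club avoiding $\dot{A}_\alpha$ to an object whose $\alpha$-th coordinate avoids $\pi(\dot{A}_\alpha)\neq\dot{A}_\alpha$ --- which is not a condition of the iteration at all, so there is nothing to amalgamate. The correct fix is precisely what the paper does: the automorphism of $\mathbb{S}\ast\mathbb{R}\ast\mathbb{T}$ must \emph{permute the coordinates} of $\mathbb{R}$, relocating coordinate $\alpha$ to the coordinate $\beta$ with $\dot{A}_\beta=\pi(\dot{A}_\alpha)$ while passing the club parts through unchanged (no factor automorphisms $\sigma_\alpha$ are needed). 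Building this permutation $\pi^*$ of $\kappa^+$ by a back-and-forth recursion on index sets $I_\rho, J_\rho$, using exactly the fact that every canonical name satisfying the criterion appears unboundedly often in the bookkeeping, is the real content of the paper's proof of Lemma~\ref{lemma: SRT weakly homogeneous}, and it is the step your proposal replaces with an invariance claim that does not hold.
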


\begin{proof}
Let $p, p^\prime \in \mathbb{S}\ast \mathbb{R}\ast \mathbb{T}$. We want to find $q\leq p,\,q^\prime \leq p^\prime$, dense sets $D, D^\prime$ below $q, q^\prime$ respectively and an automorphism $\pi\colon D\to D^\prime.$ First, let us ignore $\mathbb{R}$ and work with $\mathbb{S}\ast \mathbb{T}$. Let $\langle s,t\rangle\in \mathbb{S}\ast\mathbb{T}$, $\langle s^\prime,t^\prime \rangle\in \mathbb{S}\ast\mathbb{T}$ and assume, without loss of generality, that $\dom s = \dom s^\prime = \gamma + 1$, and $\max t = \max t^\prime = \gamma$ and $t, t^\prime \in W$. Let us define an automorphism, $\pi$, between the conditions below $\langle s, t\rangle$ and the conditions below $\langle s^\prime, t^\prime \rangle$. Let $\langle a, b\rangle \leq \langle s, t\rangle$ and assume that $\max \dom a = \max b + 1 = \delta + 1$, $b\in V$. We want to define $\langle a^\prime, b^\prime\rangle = \pi(a,b)$. For every $\rho \leq \delta$, let $\xi = \max (\acc a_\rho \cap (\gamma + 1))$, if $\acc a_\rho \cap (\gamma + 1) \neq \emptyset$, and set $a^\prime_\rho = a_\rho \setminus \xi \cup s^\prime_\xi$. Otherwise, set $a^\prime_\rho = a_\rho$. Let $b^\prime = b\setminus \gamma \cup t^\prime.$ Note that $a^\prime$ is a coherent sequence, $b^\prime$ is the last element of $a^\prime$, and it is clear that if $\langle a_1, b_1 \rangle \leq \langle a_0, b_0\rangle$ then $\pi(\langle a_1, b_1\rangle ) \leq \pi(\langle a_0, b_0 \rangle)$. It follows from a theorem of Vop\v{e}nka and H\'ajek \cite{VopenkaHajek} that if we have a partial automorphism defined below some condition, we can extend it to an automorphism of the whole Boolean completion of the forcing. Therefore, we can extend $\pi$ to an automorphism defined over the Boolean completion of $\mathbb{S}\ast \mathbb{T}$ (from now on we will not distinguish between a poset and its Boolean completion).  

Now we want to extend $\pi$ to an automorphism of $\mathbb{S}\ast\mathbb{R}\ast\mathbb{T}$. For that, we define in $W$ a permutation of the coordinates of the iteration which depends on $\pi$, that will witness the automorphism. For simplicity, let us start by dealing with the first step of the iteration, namely with the forcing $\mathbb{Q}_0$ that shoots a club disjoint from $\dot{A}_0$:

By the definition of $\mathbb{R}$, $\dot{A}_0$ is a $\mathbb{S}$-name for a subset of $\kappa$ that is forced by the maximal condition of $\mathbb{S}\ast \mathbb{T}$ to be non stationary. Let $\pi(\dot{A}_0)=\{( \pi(u), \check\alpha) \mid u\in \mathbb{S}\ast\mathbb{T},\,u \Vdash \check\alpha\in \dot{A}_0\}$. Since the maximal condition of $\mathbb{S}\ast \mathbb{T}$ is not moved by $\pi,$ it forces $\pi(\dot{A}_0)$ to be non stationary. In particular, $p \Vdash $ $``\pi(\dot{A}_0)$ is non-stationary"

Therefore, there is $\alpha < \kappa^+$ such that $\pi(\dot{A}_0) = \dot{A}_\alpha$. Note that by assuming that the names $\dot{A}_\alpha$ are canonical (namely, that $\dot{A}_\alpha \subseteq (\mathbb{S}\ast\mathbb{R})\times\kappa$), we get that $\alpha$ is determined by $p$ itself, and we don't need to extend it. 

We are now ready to define by induction a permutation of $\kappa^+$, $\pi^*$, and an extension of the automorphism $\pi$ that was defined previously. In the inductive process we will construct subsets 
$\{I_\rho\}_{\rho<\kappa^+}$
of $\kappa^+$ which are not necessarily initial segments, and we will define the automorphism on those sets, so we will also need to verify that the restrictions of the iteration to those sets are well defined forcing notions. Formally, we will treat only conditions $r\in\mathbb{R}$ such that $r_i \in W$ (namely, $r_i$ is the canonical name for a set in $W$) for every $i\in I_\rho$, and $r\restriction I_\rho$ is the condition obtained by replacing all the coordinates of $r$ which are not in $I$ with $1_{\mathbb{Q}_\gamma} = \emptyset$.  

We define inductively sets $I_\rho, J_\rho\subseteq \kappa^+$ and a map $\pi_\rho: \mathbb{S}\ast\mathbb{R}\restriction I_\rho \ast \mathbb{T} \to \mathbb{S}\ast\mathbb{R}\restriction J_\rho \ast \mathbb{T}.$

Let $I_0 = J_0 = \emptyset$, $\pi_0 = \pi$. 

Let us assume by induction that we have defined a bijection $\pi^*\colon I_\rho\to J_\rho$ such that:
\begin{enumerate}
\item $|I_\rho|, |J_\rho| < \kappa^+$.
\item $\forall \alpha\in I_\rho,\, s\Vdash \dot{A}_\alpha$ is a $\mathbb{S}\ast\mathbb{R}\restriction I_0\cap \alpha$-name. 
\item The same holds for $J_\rho$.
\item The map $\pi_\rho \colon \mathbb{S}\ast\mathbb{R}\restriction I_\rho \ast \mathbb{T} \to \mathbb{S}\ast\mathbb{R}\restriction J_\rho \ast \mathbb{T}$ defined by 
$$\pi_\rho (a, b, \langle r_i \mid i \in I_\rho\rangle) = \pi_0 (a, b)^\smallfrown \langle r_{\pi^* (i)} \mid i \in I_\rho\rangle$$
 is an automorphism.  
\end{enumerate}
As we remarked above, we identify the forcing $\mathbb{S}\ast\mathbb{R}\restriction I_\rho\ast \mathbb{T}$ with its dense set of elements in which $r_i \in W$ for every $i$. 

Let $\alpha = \min \kappa^+\setminus I_\rho$. We want to extend $I_\rho$ to $I_{\rho + 1}$ by adding $\alpha$ and dealing with the consequences. By our analysis above, as $\dot{A}_\alpha$ is forced to be non stationary by the empty condition of $\mathbb{T}$ also $\pi_\rho(\dot{A}_\alpha)$ (which is well defined) is forced to be the same, so there is $\beta < \kappa^+$ such that $\pi_\rho(\dot{A}_\alpha) = \dot{A}_\beta$. Note that the set $\pi_\rho(\dot{A}_\alpha)$ must be equal (as a set in $W$) to some $\dot{A}_\beta$ since it is forced by the empty condition to be a subset of $\kappa$ which is forced by the empty condition of $\mathbb{T}$ to be non-stationary.  

We can assume that $\beta\notin J_\rho$, as every canonical name of a subset of $\kappa$ that is forced to be non-stationary after $\mathbb{T}$ will appear during the iteration unboundedly often. Let us define $\pi^*(\alpha) = \beta$, $I_{\rho + 1} = I_\rho\cup\{\alpha\}$, $J_{\rho + 1} = J_\rho \cup \{\beta\}$ and verify that all our requirements still hold. First, it is clear that $I_{\rho+1}$ and $J_{\rho + 1}$ have cardinality $|\rho + 1| < \kappa^+$. Next, if: 
$$p\restriction I_\rho \Vdash \check{r_\alpha}\cap \dot{A}_\alpha = \emptyset$$
then applying $\pi_\rho$ we get: 
$$\pi_\rho(p\restriction I_\rho) \Vdash \check{r_\alpha}\cap \pi_\rho(\dot{A}_\alpha) = \emptyset$$
Using the fact that $\pi_\rho(\dot{A}_\alpha) = \dot{A}_\beta$ we get that $\pi_\rho(p\restriction I_\rho)\Vdash \check{r_\alpha} \in \mathbb{R}_\beta$, and that $\mathbb{R}_\beta$ is a $\mathbb{P}\restriction J_\rho$-name. It is clear that this function is order preserving and reversible. 

Similarly, we extend $\pi^\star$ so that its range will contain $\min \kappa^+\setminus J_\rho$.

For limit stage $\rho$, we set $I_\rho = \bigcup_{\gamma<\rho} I_\gamma$, $J_\rho = \bigcup_{\gamma<\rho} J_\gamma$ and define the bijection $\pi^*$ as the union of all previous bijections. 
\end{proof}  

What we will actually need in the proof of the main theorem is the following lemma that is proven with the same arguments. 

\begin{lemma}\label{lem: iteration is homogeneous} For every $n<\omega,$
the forcing $\mathbb{C}_n \ast \mathbb{S} \ast\mathbb{R} \ast\mathbb{T}$ is homogeneous in $V.$ 
\end{lemma}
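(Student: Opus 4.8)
The plan is to combine the homogeneity of the head collapse $\mathbb{C}_n$ with the automorphism construction already carried out for the tail in Lemma~\ref{lemma: SRT weakly homogeneous}. The first observation is that nothing in the previous subsection used any feature of $W=V^{\mathbb{C}_{fin}}$ beyond the cardinal arithmetic $2^{<\kappa}=\kappa$ and $2^{\kappa}=\kappa^+$ at $\kappa=\lambda^+$; this was stressed when the subsection was set up. Since $\mathbb{C}_n$ preserves $\lambda^+$ and, from the ground-model assumption $2^{\kappa_m}=\kappa_m^+$, still forces $2^{<\kappa}=\kappa$ and $2^{\kappa}=\kappa^+$, I would simply reinterpret $\mathbb{S},\mathbb{R},\mathbb{T}$ as the corresponding posets defined over $V^{\mathbb{C}_n}$, so that Lemma~\ref{lemma: SRT weakly homogeneous} and its proof apply verbatim with $W=V^{\mathbb{C}_n}$.

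Next I would treat the head factor on its own. Each Levy collapse $\Coll(\kappa_m^{++},<\kappa_{m+1})$ is weakly homogeneous: given two of its conditions one permutes the columns and shifts the ordinal coordinates to obtain an automorphism sending one of them below a common refinement of the other, and the full-support product over $m\ge n$ inherits this, so $\mathbb{C}_n$ is homogeneous and for any $c,c'\in\mathbb{C}_n$ there is an automorphism $\sigma$ of $\mathbb{C}_n$ with $\sigma(c)$ compatible with $c'$. The crucial point is that such a $\sigma$ fixes $\check\kappa=\check\lambda^+$, and therefore fixes the canonical $\mathbb{C}_n$-names $\dot{\mathbb{S}}$ and $\dot{\mathbb{T}}$, which are definable from $\lambda^+$ alone; consequently $\sigma$ lifts to an automorphism of $\mathbb{C}_n$-names that respects the two-step structure $\mathbb{C}_n\ast\dot{\mathbb{S}}\ast\dot{\mathbb{T}}$.

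Finally I would splice the two pieces together. Given two conditions of $\mathbb{C}_n\ast\mathbb{S}\ast\mathbb{R}\ast\mathbb{T}$, I would first choose $\sigma$ as above to align their $\mathbb{C}_n$-coordinates, and then, working over $V^{\mathbb{C}_n}$, run the construction of Lemma~\ref{lemma: SRT weakly homogeneous} on the $\mathbb{S}\ast\mathbb{R}\ast\mathbb{T}$-parts: build the partial automorphism of $\mathbb{S}\ast\mathbb{T}$ by the same swapping of coherent sequences, extend it to the Boolean completion by the theorem of Vop\v{e}nka and H\'ajek~\cite{VopenkaHajek}, and inductively construct the permutation $\pi^*$ of the coordinates of $\mathbb{R}$ exactly as before. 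Defining the global automorphism by $\sigma$ on the $\mathbb{C}_n$-coordinate and by the induced tail automorphism on the rest yields the desired map. The step I expect to be the main obstacle is the bookkeeping needed to see that this head-and-tail map is genuinely a single automorphism of the iteration: one has to check that $\sigma$ commutes with the definition of the names $\dot{A}_\alpha$, so that the combined automorphism still sends each $\dot{A}_\alpha$ (forced non-stationary by $1_\mathbb{T}$) to a name $\dot{A}_\beta$ of the same kind with $\beta$ outside the part of the range constructed so far. This is exactly the delicate point of Lemma~\ref{lemma: SRT weakly homogeneous}, and it goes through for the same reason: $\sigma$ fixes the maximal condition and the cardinal $\kappa$, so non-stationarity is preserved, and by the choice of the enumeration such names occur cofinally in the iteration.
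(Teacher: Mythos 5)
There is a genuine gap, and it lies in your very first move. You propose to ``reinterpret $\mathbb{S},\mathbb{R},\mathbb{T}$ as the corresponding posets defined over $V^{\mathbb{C}_n}$'' and then apply Lemma~\ref{lemma: SRT weakly homogeneous} verbatim with $W=V^{\mathbb{C}_n}$. But this changes the forcing whose homogeneity is being claimed. The paper is explicit about its conventions just before Lemma~\ref{lemma: directed closed}: in the expression $\mathbb{C}_n\ast\mathbb{S}\ast\mathbb{R}\ast\mathbb{T}$, the poset $\mathbb{S}$ is the one defined in $V$ (equivalently in $V^{\mathbb{C}_{fin}}$), \emph{not} in $V^{\mathbb{C}_n}$, and ``$\mathbb{R}$ is still defined over $\mathbb{C}_{fin}\ast\mathbb{S}$.'' The distinction is not cosmetic: the identification of $\mathbb{S}$ across models is justified by the $\lambda^+$-\emph{distributivity} of $\mathbb{C}_{fin}$ (Lemma~\ref{lem: c_fin is distributive}), and $\mathbb{C}_n$ has no such property --- it collapses every interval $(\kappa_m^{++},\kappa_{m+1})$ for $m\geq n$, hence adds plenty of new bounded subsets of $\lambda^+$. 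Consequently $\mathbb{S}$ recomputed in $V^{\mathbb{C}_n}$ has strictly more conditions than $\mathbb{S}$, and $\mathbb{R}$ recomputed over $V^{\mathbb{C}_n\ast\mathbb{S}}$ is a different iteration, built from a different pool of names $\dot{A}_\alpha$. So your argument, even if carried out correctly, establishes weak homogeneity of a different poset. This also defeats the purpose of the lemma: it is invoked to guarantee that the normal ultrafilter $U_n$ chosen in $W_n=V^{(\mathbb{S}\times\mathbb{C}_n)\ast\mathbb{R}\ast\mathbb{T}}$ --- with the \emph{original} $\mathbb{S},\mathbb{R},\mathbb{T}$ --- does not depend on the generic filter; homogeneity of the recomputed forcing says nothing about that model.

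The paper's route avoids this entirely, and it is the correct repair of your head-and-tail idea. Because $\mathbb{C}_{fin}$ is $\lambda^+$-distributive, $\mathbb{S}\ast\mathbb{T}$ is literally the same two-step iteration whether computed in $V$ or in $V^{\mathbb{C}_{fin}}$; hence $\mathbb{C}_n\ast\mathbb{S}\ast\mathbb{T}$ is the \emph{product} $\mathbb{C}_n\times(\mathbb{S}\ast\mathbb{T})$ taken in $V$, which is weakly homogeneous as a product of two weakly homogeneous forcings (no lifting of automorphisms of $\mathbb{C}_n$ through names is needed, since the tail is not a $\mathbb{C}_n$-name at all). Then one observes that the inductive construction of the coordinate permutation $\pi^*$ in Lemma~\ref{lemma: SRT weakly homogeneous} used nothing about the base beyond its weak homogeneity, so the same construction extends an automorphism of the base $\mathbb{C}_n\times(\mathbb{S}\ast\mathbb{T})$ to the coordinates of $\mathbb{R}$ --- where $\mathbb{R}$ is kept as the iteration defined over $\mathbb{C}_{fin}\ast\mathbb{S}$, its names $\dot{A}_\alpha$ being carried to names of the same kind exactly as in that lemma. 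If you replace your ``reinterpretation'' step by this absoluteness-of-$\mathbb{S}\ast\mathbb{T}$ observation and keep $\mathbb{R}$ untouched, the rest of your outline (head homogeneity of $\mathbb{C}_n$, Vop\v{e}nka--H\'ajek extension, the bookkeeping for the $\dot{A}_\alpha$) matches the paper's argument.
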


\begin{proof}
The proof is essentially the same as for Lemma~\ref{lemma: SRT weakly homogeneous}. $\mathbb{S}\ast \mathbb{T}$ is defined in $V$ (since it is defined in $V^{\mathbb{C}_{fin}}$ and $\mathbb{C}_{fin}$ does not add bounded subsets to $\kappa$ by Lemma~\ref{lem: c_fin is distributive}). $\mathbb{C}_n$ is weakly homogeneous and as we proved in the previous lemma, $\mathbb{S}\ast\mathbb{T}$ is weakly homogeneous. The product of two weakly homogeneous forcing is weakly homogeneous so $\mathbb{C}_{n}\ast \mathbb{S}\ast\mathbb{T} = \mathbb{C}_n \times (\mathbb{S}\ast\mathbb{T})$ is weakly homogeneous. In the above proof, in order to extend the automorphism of $\mathbb{S}\ast\mathbb{T}$ to an automorphism of $\mathbb{S}\ast\mathbb{T}\ast \mathbb{R}$ we only required that $\mathbb{S}\ast\mathbb{T}$ is weakly homogeneous, so the same argument works for $\mathbb{C}_n\ast\mathbb{S}\ast\mathbb{T}$ as well. 
\end{proof}

\subsection{The Magidor and Shelah's forcing}\label{subsec: prikry forcing}

In order to get the Delta reflection at $\aleph_{\omega^2+1},$ we will force with a version of the forcing defined by Magidor and Shelah in \cite{MagidorShelah}; we will make minor adjustments in the definition of this forcing. This construction is a version of diagonal Prikry forcing which is quite complex. In order to get a better control over the sets that are introduced by it, we will first split it into two steps - the first is $\mathbb{C}_{fin}$ which is $\lambda^+$-distributive, and the second, which we denote by $\mathbb{P}^*$, will be $\lambda$-centred. Moreover, $\mathbb{C}_n$ projects onto $\mathbb{C}_{fin}$ and the quotient forcing is $\lambda^+$-c.c. (even after further forcing), which will be useful as $\mathbb{C}_n$ also introduces a part of the generic for $\mathbb{P}^*$ while being $\kappa_n^{++}$-directed closed, so it preserves the supercompactness of $\kappa_n$ and does not add any new subsets of size $\kappa_n$.   

\begin{remark}\label{remark: projection from c_fin to c_n} There is a projection, $\iota$, from $\mathbb{C}_n$ onto $\mathbb{C}_{fin}$. 
\end{remark}

Indeed, we can define $\iota$ by letting $\iota(p) = [\tup{ 1 \mid i < n}^\smallfrown p]_\sim$.

\begin{lemma}\label{lem: c_fin is distributive}
$\mathbb{C}_{fin}$ is $\lambda^+$-distributive.
\end{lemma}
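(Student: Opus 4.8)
The plan is to prove the stronger statement that $\mathbb{C}_{fin}$ adds no new function $f\colon\delta\to\mathrm{Ord}$ with $\delta\leq\lambda$, which is exactly $\lambda^+$-distributivity. The two tools are the high closure of the tail products together with the projection $\iota$ of Remark~\ref{remark: projection from c_fin to c_n}. Fix a $\mathbb{C}_{fin}$-name $\dot f$ and a condition $p=[c]$ forcing $\dot f\colon\lambda\to\mathrm{Ord}$; it suffices to find $q\leq p$ deciding $\dot f$. I would build, by recursion on $n<\omega$, representatives $c_{-1}=c\geq c_0\geq c_1\geq\cdots$ that are decreasing \emph{coordinate-wise in} $\mathbb{C}_0$ and such that $[c_n]\Vdash\dot f\restriction\kappa_n=\check h_n$ for some ground-model $h_n$.

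The deciding step is where the closure of the tails enters. At stage $n$, reinterpret $\dot f$ through the projection $\iota_n\colon\mathbb{C}_n\to\mathbb{C}_{fin}$ as a $\mathbb{C}_n$-name $\dot f_n$ (so $\dot f_n^{G}=\dot f^{\iota_n[G]}$). The tail forcing $\mathbb{C}_n=\prod_{m\geq n}\Coll(\kappa_m^{++},<\kappa_{m+1})$ is a full-support product of $\kappa_m^{++}$-closed posets, hence $\kappa_n^{++}$-closed and so $\kappa_n^{++}$-distributive; since $\dot f_n\restriction\kappa_n$ is a name for a sequence of length $\kappa_n<\kappa_n^{++}$, I can find $r\leq c_{n-1}\restriction[n,\omega)$ in $\mathbb{C}_n$ deciding $\dot f_n\restriction\kappa_n=\check h_n$. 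Setting $c_n:=(c_{n-1}\restriction[0,n))^\smallfrown r$, one checks $c_n\leq c_{n-1}$ coordinate-wise in $\mathbb{C}_0$, while $c_n$ and $\langle 1\mid i<n\rangle^\smallfrown r$ agree past coordinate $n$, so $[c_n]=\iota_n(r)$. The standard projection fact — if $\pi$ is a projection and $r\Vdash\pi^*\dot f\restriction\kappa_n=\check h_n$ then $\pi(r)\Vdash\dot f\restriction\kappa_n=\check h_n$ (lift a $\mathbb{Q}$-generic through $r$ using the quotient) — then yields $[c_n]\Vdash\dot f\restriction\kappa_n=\check h_n$.

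To conclude, note that $\mathbb{C}_0$, being a full-support product of $\sigma$-closed posets, is $\sigma$-closed, so the coordinate-wise decreasing sequence $\langle c_n\mid n<\omega\rangle$ has a coordinate-wise lower bound $c_\infty\in\mathbb{C}_0$. Then $[c_\infty]\leq[c_n]$ for every $n$, so $[c_\infty]$ decides $\dot f\restriction\kappa_n=\check h_n$ for all $n$; the $h_n$ cohere (each is forced equal to the restriction of the next), and since $\lambda=\sup_n\kappa_n$ the function $f':=\bigcup_n h_n\in V$ satisfies $[c_\infty]\Vdash\dot f=\check{f'}$. As $[c_\infty]\leq p$, this proves distributivity.

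The main obstacle is precisely the combination step: the order on $\mathbb{C}_{fin}$ is \emph{eventual} domination, and a sequence that is merely decreasing in $\mathbb{C}_{fin}$ need not admit a lower bound deciding the whole of $\dot f$ (indeed $\sigma$-closure of the eventual-domination order is not available directly). The device that resolves this is to carry out the $n$-th decision inside the tail $\mathbb{C}_n$, where the closure is $\kappa_n^{++}$, and to \emph{freeze} the first $n$ coordinates when pulling back along $\iota_n$; this converts descent in the eventual-domination order into genuine coordinate-wise descent in $\mathbb{C}_0$, where the $\sigma$-closed product does supply the required lower bound. One should also double-check that $\kappa_n<\kappa_n^{++}$ makes each tail decision legitimate and that freezing finitely many coordinates does not change the $\sim$-class, both of which are immediate.
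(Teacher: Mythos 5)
Your proof is correct, but it takes a more hands-on route than the paper's, which is three lines long and purely abstract: (i) distributivity is inherited by projection images, so $\mathbb{C}_{fin}$ is $\kappa_n^{++}$-distributive for every $n$, being a projection of the $\kappa_n^{++}$-closed tail $\mathbb{C}_n$, and hence is $\lambda$-distributive; and (ii) the degree of distributivity of any forcing notion --- the least length of a new sequence of ordinals added --- is always a regular cardinal, so, as $\cf \lambda = \omega$, $\lambda$-distributivity upgrades for free to $\lambda^+$-distributivity. Your fusion argument unwinds exactly these two facts at the level of conditions: the stage-$n$ decision of $\dot f\restriction\kappa_n$ inside $\mathbb{C}_n$, pulled back along $\iota_n$ (your transfer step is the standard quotient/lifting fact for projections, and it is applied correctly), is the concrete content of (i); and the coordinatewise descent in $\mathbb{C}_0$ ending in a single master condition replaces the general regularity fact (ii), whose standard proof is softer (each $\dot f \restriction \kappa_n$ is old by $\lambda$-distributivity, and the $\omega$-sequence $\langle \dot f\restriction\kappa_n \mid n < \omega\rangle$ of ground-model objects is itself old by $\omega_1$-distributivity, so $\dot f$ is old). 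What the paper's route buys is brevity and modularity, since both cited facts are standard and reusable; what yours buys is a self-contained construction of an explicit $q \leq p$ deciding all of $\dot f$, which makes visible the one point the paper leaves implicit: why the eventual-domination ordering of $\mathbb{C}_{fin}$, which is not obviously closed, creates no obstacle (your representative-freezing device). Both arguments ultimately rest on the same structural features of the forcing --- the projections $\iota_n\colon \mathbb{C}_n \to \mathbb{C}_{fin}$ and the $\kappa_n^{++}$-closure of the tails --- so the difference is one of packaging rather than of substance.
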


\begin{proof}
For every $n$, $\mathbb{C}_{fin}$ is $\kappa_n^{++}$-distributive as a projection of $\mathbb{C}_n$. Therefore, $\mathbb{C}_{fin}$ is $\lambda$-distributive. Since the distributivity of a forcing notion is always a regular cardinal, and $\cf \lambda = \omega$, $\mathbb{C}_{fin}$ is $\lambda^+$-distributive. 
\end{proof}

As we remarked at the beginning of the subsection~\ref{subsection: generically preservation of stationary sets}, $\mathbb{S}\ast\mathbb{R}$ and $\mathbb{T}$ are defined in $V^{\mathbb{C}_{fin}}$. 
The projection $\iota$ enables us to pull those definitions into $V^{\mathbb{C}_n}$. 
Recall that $\mathbb{S}$ is defined in the same way over $V$ and over $V^{\mathbb{C}_{fin}}$ (by Lemma~\ref{lem: c_fin is distributive}). 
We will use this fact and won't distinguish between $\mathbb{S}\times \mathbb{C}_{fin}$ and $\mathbb{C}_{fin}\ast \mathbb{S}$. 
The former notation will be used when dealing with the forcing $\mathbb{C}_n$ in order to stress that we are using in this case the forcing $\mathbb{S}$ as defined in $V$ and not in $V^{\mathbb{C}_n}$.
In contrast to the situation of $\mathbb{S}$, the definition of $\mathbb{R}$ depends on the generic filter for $\mathbb{C}_{fin}$. We stress that when writing $\mathbb{C}_n \ast \mathbb{S}\ast \mathbb{R}$, $\mathbb{R}$ is still defined over $\mathbb{C}_{fin}\ast\mathbb{S}$. 

\begin{lemma}\label{lemma: directed closed} $(\mathbb{S}\times \mathbb{C}_n)\ast \mathbb{R}\ast \mathbb{T}$ is equivalent to a $\kappa_n^{++}$-directed closed forcing.
\end{lemma}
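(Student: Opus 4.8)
The plan is to reorganize the three-step iteration so that the poorly-closed forcing $\mathbb{S}$ is always kept together with its threading $\mathbb{T}$, and then to assemble a $\kappa_n^{++}$-directed closed dense subset out of the three pieces $\mathbb{C}_n$, $\mathbb{S}\ast\mathbb{T}$ and $\mathbb{R}$. First I would commute the factors. Since $\mathbb{T}$ is definable from the generic square sequence added by $\mathbb{S}$ and does not mention $\mathbb{R}$, we have $\mathbb{R}\ast\mathbb{T}\cong\mathbb{T}\ast\mathbb{R}$ exactly as in the proof of Lemma~\ref{lemma: SRalphaT contains a kappa closed dense subset}; and since $\mathbb{T}$ does not depend on the $\mathbb{C}_n$-generic, $(\mathbb{S}\times\mathbb{C}_n)\ast\mathbb{T}\cong(\mathbb{S}\ast\mathbb{T})\times\mathbb{C}_n$, where $\mathbb{S}\ast\mathbb{T}$ is taken to be the ground-model-definable forcing (it is defined in $V$ by Lemma~\ref{lem: c_fin is distributive}). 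Combining these gives
$$(\mathbb{S}\times\mathbb{C}_n)\ast\mathbb{R}\ast\mathbb{T}\ \cong\ ((\mathbb{S}\ast\mathbb{T})\times\mathbb{C}_n)\ast\mathbb{R},$$
where $\mathbb{R}$ is still the iteration defined over $\mathbb{C}_{fin}\ast\mathbb{S}$, with the $\mathbb{C}_{fin}$-generic obtained from the $\mathbb{C}_n$-generic via the projection $\iota$ of Remark~\ref{remark: projection from c_fin to c_n}.

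Next I would dispose of the two outer factors. The forcing $\mathbb{C}_n$ is $\kappa_n^{++}$-directed closed, being a full-support product of the forcings $\Coll(\kappa_m^{++},<\kappa_{m+1})$ with $m\geq n$. The dense subset $\mathcal{D}\subseteq\mathbb{S}\ast\mathbb{T}$ isolated in the first lemma of Section~\ref{subsection: generically preservation of stationary sets} is $\kappa$-directed closed (this is the case $\alpha=0$ of the Remark following Lemma~\ref{lemma: SRalphaT contains a kappa closed dense subset}), hence $\kappa_n^{++}$-directed closed since $\kappa_n^{++}<\kappa$. Therefore $\mathcal{D}\times\mathbb{C}_n$ is a $\kappa_n^{++}$-directed closed dense subset of $(\mathbb{S}\ast\mathbb{T})\times\mathbb{C}_n$, and the whole problem reduces to handling the last factor $\mathbb{R}$.

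This last factor is the main obstacle. Write $U$ for the $((\mathbb{S}\ast\mathbb{T})\times\mathbb{C}_n)$-extension and $W^{\mathbb{S}\ast\mathbb{T}}$ for the $((\mathbb{S}\ast\mathbb{T})\times\mathbb{C}_{fin})$-extension, where $W=V^{\mathbb{C}_{fin}}$; thus $W^{\mathbb{S}\ast\mathbb{T}}\subseteq U$. By Corollary~\ref{coroll: directed closure Cfin} together with the Remark following Lemma~\ref{lemma: SRalphaT contains a kappa closed dense subset}, $\mathbb{R}$ has a $\kappa$-directed closed dense subset $D\in W^{\mathbb{S}\ast\mathbb{T}}$. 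The difficulty is that this closure is only guaranteed over $W^{\mathbb{S}\ast\mathbb{T}}$, whereas we must force with $\mathbb{R}$ over the genuinely larger model $U$; this is precisely where it matters that the full $\mathbb{C}_n$, and not merely $\mathbb{C}_{fin}$, sits in the iteration. I would resolve it by a distributivity argument. Since $\mathbb{S}\ast\mathbb{T}$ has a $\kappa$-closed dense subset and $\kappa_n^{++}<\kappa$, the forcing $\mathbb{C}_n$ remains $\kappa_n^{++}$-directed closed over $V^{\mathbb{S}\ast\mathbb{T}}$ (any family of fewer than $\kappa_n^{++}$ of its conditions has size $<\kappa$, so it already lies in $V$, where $\mathbb{C}_n$ is $\kappa_n^{++}$-directed closed). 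Hence $U=V^{\mathbb{S}\ast\mathbb{T}}[\mathbb{C}_n]$ is $\kappa_n^{++}$-distributive over $V^{\mathbb{S}\ast\mathbb{T}}$, and, since $V^{\mathbb{S}\ast\mathbb{T}}\subseteq W^{\mathbb{S}\ast\mathbb{T}}$, a fortiori $U$ adds no new sequences of length less than $\kappa_n^{++}$ over the intermediate model $W^{\mathbb{S}\ast\mathbb{T}}$.

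With this in hand, any directed family of fewer than $\kappa_n^{++}$ conditions of $D$ lying in $U$ becomes, after coding $D$ by an ordinal through a bijection in $W^{\mathbb{S}\ast\mathbb{T}}$, a short sequence of ordinals, which therefore already belongs to $W^{\mathbb{S}\ast\mathbb{T}}$; there it has a (unique maximal) lower bound by the $\kappa$-directed closure of $D$, and this bound is still a lower bound in $U$. Thus $D$ is $\kappa_n^{++}$-directed closed in $U$, so $(\mathcal{D}\times\mathbb{C}_n)\ast D$ is a two-step iteration of $\kappa_n^{++}$-directed closed forcings, hence itself $\kappa_n^{++}$-directed closed, and it is dense in $((\mathbb{S}\ast\mathbb{T})\times\mathbb{C}_n)\ast\mathbb{R}\cong(\mathbb{S}\times\mathbb{C}_n)\ast\mathbb{R}\ast\mathbb{T}$. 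The only genuinely delicate point is this transfer of the directed closure of $\mathbb{R}$ from $W^{\mathbb{S}\ast\mathbb{T}}$ up to $U$; the remaining steps are bookkeeping with the commuting of factors and with the stability of $\kappa_n^{++}$-directed closure under products and two-step iterations.
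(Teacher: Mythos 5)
Your proof is correct, but it follows a genuinely different decomposition than the paper's, even though both rest on the same transfer mechanism. The paper keeps the block $\mathbb{S}\ast\mathbb{R}\ast\mathbb{T}$ intact: by Corollary~\ref{coroll: directed closure Cfin} (with the Remark following Lemma~\ref{lemma: SRalphaT contains a kappa closed dense subset}) this block has a $\lambda^+$-directed closed dense subset in $V^{\mathbb{C}_{fin}}$; since $\mathbb{C}_n$ is $\kappa_n^{++}$-closed and $\mathbb{C}_{fin}$ is $\lambda^+$-distributive, the models $V^{\mathbb{C}_n}$ and $V^{\mathbb{C}_{fin}}$ agree about sequences of ordinals of length $<\kappa_n^{++}$, so every directed family of size $<\kappa_n^{++}$ in $V^{\mathbb{C}_n}$ already lies in $V^{\mathbb{C}_{fin}}$ and has a lower bound there; writing the whole forcing as $\mathbb{C}_n\ast(\mathbb{S}\ast\mathbb{R}\ast\mathbb{T})$ finishes the proof in three lines. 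You instead commute $\mathbb{T}$ past $\mathbb{R}$, split into the three factors $\mathcal{D}$, $\mathbb{C}_n$ and the quotient $\mathbb{R}$, and apply the transfer principle (distributivity $\Rightarrow$ agreement on short sequences of ordinals $\Rightarrow$ directed families descend to the model where closure is known) only to $\mathbb{R}$, comparing $W^{\mathbb{S}\ast\mathbb{T}}$ with $U$ through the intermediate model $V^{\mathbb{S}\ast\mathbb{T}}$; that chain of inclusions and the resulting ``a fortiori'' step are sound. What your version buys is a sharp localization of the difficulty: it makes explicit that $\mathcal{D}\times\mathbb{C}_n$ is harmless in $V$ and that the only obstacle is the quotient $\mathbb{R}$, whose closure is a priori known only over $W^{\mathbb{S}\ast\mathbb{T}}$. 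What it costs is the extra bookkeeping — the two commutations, closure of products and of two-step iterations under $\kappa_n^{++}$-directed closure, and one mild extrapolation: Corollary~\ref{coroll: directed closure Cfin} asserts only that the quotient $\mathbb{R}$ has a $\kappa$-\emph{closed} dense subset over $W^{\mathbb{S}\ast\mathbb{T}}$, whereas you need $\kappa$-\emph{directed} closure of that quotient dense set; this does follow from the unique-maximal-lower-bound property of the sets $D_\alpha$ together with the directedness of the $\mathbb{S}\ast\mathbb{T}$-generic filter, and is within the paper's own standard of rigor, but it is not literally what the cited statements say. The paper's wholesale transfer avoids all of these side issues.
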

\begin{proof} 
By Corollary~\ref{coroll: directed closure Cfin}, the forcing notion $\mathbb{S}\ast\mathbb{R}\ast\mathbb{T}$ is equivalent to a $\lambda^+$ directed closed forcing in $V^{\mathbb{C}_{fin}}$. Since $V^{\mathbb{C}_n}$ agrees with $V^{\mathbb{C}_{fin}}$ about sequences of ordinals of length $< \kappa_n^{++}$ and $\mathbb{S}\ast\mathbb{R}\ast\mathbb{T} \in V^{\mathbb{C}_{fin}}$, every directed subset of $\mathbb{S}\ast\mathbb{R}\ast\mathbb{T}$ of size $<\kappa_n^{++}$ in $V^{\mathbb{C}_n}$ also appears in $V^{\mathbb{C}_{fin}}$ so it has a lower bound, as required.  
\end{proof}

We denote by $W_n$ the model $V^{(\mathbb{S}\times \mathbb{C}_n)\ast \mathbb{R} \ast\mathbb{T}}$



Now we give the definition of the forcing construction of Magidor and Shelah with minor modifications that are needed for our purposes. We denote this forcing notion by $\mathbb{P}.$ In order to define $\mathbb{P}$ we need some preparation (the only substantial difference with the original forcing by Magidor and Shelah is in the following preparation). 

Lemma \ref{lemma: directed closed} implies that for every $n<\omega,$
$$W_n \models\ \kappa_n\textrm{ is supercompact }.$$  
Therefore, there is in this model a normal ultrafilter $U_n^*$ on $\mathcal{P}_{\kappa_n}(\lambda^+)$ and $U_n^*$ has a natural projection to a normal ultrafilter $U_n$ on $\kappa_n.$ By the closure of the poset together with the fact that we assumed $2^{\kappa_n}= \kappa_n^+,$ we have $U_n \in V.$ Let $\pi_n: V\to N_n$ be the elementary embedding corresponding to 
$U_n.$ Consider $\Coll^{N_n}(\kappa_n^{+\omega+2}, <\pi_n(\kappa_n)),$ this forcing has the $\pi_n(\kappa_n)$-chain condition in $N_n$ and $\pi(\kappa_n)$ is inaccessible. Therefore, there are $\pi_n(\kappa_n)$ many maximal antichains of this forcing which are in $N_n.$ 
On the other hand $\vert \pi_n(\kappa_n)\vert = \kappa_n^+$ and the forcing is $\kappa_n^+$-closed in $N_n.$ 
Therefore, one can inductively define in $V$ a generic filter $K_n$ for $\Coll^{N_n}(\kappa_n^{+\omega+2}, <\pi_n(\kappa_n))$ over $N_n$ by meeting each dense set in $N_n.$ 

\begin{remark} It is important that not only the ultrafilter $U_n$ is in $V,$ but also its choice is independent from the generic filter for $(\mathbb{S}\times \mathbb{C}_n)\ast \mathbb{R}\ast \mathbb{T}$. This is true, as this forcing is homogeneous, by Lemma~\ref{lem: iteration is homogeneous}.
\end{remark}

Now we are ready to define $\mathbb{P}$ in $V.$  

\begin{definition} Conditions of $\mathbb{P}$ are sequences of the form 
$$p= \langle \alpha_0, g_0, f_0,\ ...\ \alpha_{n-1}, g_{n-1}, f_{n-1}, A_n, g_n, F_n,\ ...\ \rangle$$
such that: 
\begin{enumerate}
\item every $\alpha_i$ is an inaccessible cardinal between $\kappa_{i-1}$ and $\kappa_i$ (with $\kappa_{-1}:= \omega$);
\item for $i<n,$ $g_i\in \Coll(\kappa_{i-1}^{++}, <\alpha_i);$
\item $f_i\in \Coll(\alpha_i^{+\omega+2}, <\kappa_i);$
\item $A_j\in U_j$ and every element of $A_j$ is an inaccessible cardinal;
\item for $j\geq n,$ $g_j\in \Coll(\kappa_{j-1}^{++}, <\alpha)$ for the least $\alpha$ in $A_j$ (hence for every $\alpha\in A_j$);
\item $F_j$ is a function with domain $A_j$ such that $F_j(\alpha)\in \Coll(\alpha^{+\omega+2}, <\kappa_j)$ for every $\alpha\in A_j,$ and such that the equivalence class of $F_j$ as a member of the ultrapower $Ult(V, U_j)$ (denoted $[F_j]_{U_j}$) is in $K_j.$
\end{enumerate}

Given two conditions 
$$p= \langle \alpha_0^p, g_0^p, f_0^p,\ ...\ \alpha_{n-1}^p, g_{n-1}^p, f_{n-1}^p, A_n^p, g_n^p, F_n^p,\ ...\ \rangle$$
$$q= \langle \alpha_0^q, g_0^q, f_0^q,\ ...\ \alpha_{m-1}^q, g_{m-1}^q, f_{m-1}^q, A_m^q, g_m^q, F_m^q,\ ...\ \rangle$$
we say that $p\leq q$ if and only if, the following hold
\begin{enumerate}
\item $m\leq n$ and for $i<m,$ $\alpha_i^p= \alpha_i^q$ and $f_i^p\leq f_i^q;$
\item for every $i<\omega,$ $g_i^p\leq g_i^q;$
\item for $m\leq j< n,$ $\alpha_j^p\in A_i^q$ and $f_i^p\leq F_i^q(\alpha_j^p);$
\item for $j\geq n,$ $A_j^p\subseteq A_j^q$ and $F_j^p(\alpha)\leq F_j^q(\alpha)$ for all $\alpha\in A_j^p.$
\end{enumerate}

\end{definition}

We introduce some notations. Given a condition $p$ of $\mathbb{P}$ of the form  
$$p= \langle \alpha_0, g_0, f_0,\ ...\ \alpha_{n-1}, g_{n-1}, f_{n-1}, A_n, g_n, F_n,\ ...\ \rangle$$

we say that 
\begin{enumerate}
\item $n$ is the \emph{length} of $p,$ and we denote it $lg(p);$
\item the subsequence $\langle \alpha_0, g_0, f_0,\ ...\ \alpha_{n-1}, g_{n-1}, f_{n-1}\rangle$
is called the \emph{lower part of} $p$ or the \emph{stem of $p,$} denoted $\stem(p);$ 
\item $\langle \alpha_0,\ldots \alpha_{n-1} \rangle$ is the \emph{$\alpha$-part of $p$};
\item $\langle g_i:\ i<\omega\rangle$ is the \emph{$g$-part of $p$} 

\end{enumerate}

\ 

Given two conditions $p\leq q,$ such that $lg(p)= n$ and $lg(q)= m$ 
\begin{enumerate}
\item for $k\leq m,$ we write $p\restr k$ for the sequence $\langle \alpha_0, g_0, f_0,\ ...\ \alpha_{k-1}, g_{k-1}, f_{k-1}, g_k\rangle$ and we write $p\leq_k q$ when $p\leq q,$ $lg(p)= lg(q)$ and $p\restr k= q\restr k.$  
\item for $k\leq n,$ we say that $p$ is a \emph{$k$-direct extension} of $q$ if \begin{enumerate}
\item $f_i^p= f_i^q$ for $m>i\geq k$
\item $g_i^p= g_i^q$ for every $i>k$
\item $A_j^p= A_j^q$ for every $j\leq n$ except when $k= m= n,$ in that case 
$A_n= \{\alpha\in A_n^q ;\ g_n^p\in \Coll(\kappa_{n-1}^{++}, <\alpha) \}$
\item $F_j^p(\alpha)= F_j^q(\alpha)$ for every $j\leq n$ and $\alpha\in A_j^p$
\item $f_i^p= F_i^q(\alpha)$ for $i\geq m, k$ 
\end{enumerate} 
(informally $p$ is a $k$-direct extension of $q$ if $p$ does not add information on the collapses above $\alpha_k.$)
We say that $p$ is a \emph{direct extension} of $q$ if it is a $0$-direct extension and $g_0^p= g_0^q.$
\item for $k\leq n,$ the \emph{$k$-interpolation} of $q$ and $p,$ denoted $Int(k, q, p)$ is the unique condition $s$ such that $s$ is a $k$-direct extension of $q$ and $s\leq_k p.$
\end{enumerate}

We state some important facts about $\mathbb{P}.$

\begin{lemma}\label{lemma: Prikry property for P} (Magidor Shelah \cite[Lemma 3, p. 791]{MagidorShelah})
$\mathbb{P}$ has the \emph{Prikry property}, namely for every open subset $D$ of $\mathbb{P},$ for every condition $p\in \mathbb{P}$ and every $k\leq lg(p),$ there exists a condition $q\leq_k p$ such that 
\begin{enumerate}
\item for every condition $q^*\leq q$ in $D,$ we have $Int(k, q, q^*)\in D$ 
\item given $q^*\leq q$ in $D,$ for every condition $q^{**}\leq q$ with the same length of $q^*$ such that $q^{**}\restr k= q^*\restr k,$ we have $q^{**}\in D$ 
\item for $q^*$ and $q^{**}$ as above, if $D$ is the set of conditions deciding a given statement $\varphi,$ then we can assume that $q^*\force \varphi$ if and only if $q^{**}\force \varphi.$
\end{enumerate}
\end{lemma}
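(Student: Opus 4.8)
The statement is the strong Prikry property for the diagonal Prikry forcing $\mathbb{P}$, and the plan is to run the classical Rowbottom/diagonal-intersection argument of Magidor and Shelah, deducing all three clauses from a single homogenization of $D$ above the level $k$. Fix $p$ with $lg(p)=n$ and $k\leq n$. I would work entirely among the conditions $r\leq_k p$ (so $r$ has length $n$ and $r\restr k=p\restr k$), homogenizing, coordinate by coordinate, the question ``does adjoining this much information admit a direct extension lying in $D$?''. The two combinatorial engines are the completeness of the normal ultrafilters $U_j$ --- each $U_j$ being $\kappa_j$-complete --- together with the high closure of the collapse forcings $\Coll(\kappa_{j-1}^{++},<\alpha)$ and $\Coll(\alpha^{+\omega+2},<\kappa_j)$; the former lets me amalgamate measure-one sets across the $\omega$ many coordinates $j\geq n$ (at coordinate $j$ there are fewer than $\kappa_j$ many possible choices at earlier coordinates, so the intersection of the corresponding $U_j$-large sets stays in $U_j$), and the latter lets me pre-decide collapse data by descending sequences.

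First I would treat a single measure-coordinate $j\geq n$ above a fixed partial stem reaching level $j$. For each candidate ordinal $\alpha\in A_j^{p}$ I ask whether the condition obtained by adjoining $\alpha$, with canonical collapse data strengthening $F_j^{p}(\alpha)$, has a direct extension in $D$; this is a two-valued function of $\alpha$, so by $\kappa_j$-completeness of $U_j$ it is constant on a set $B_j\in U_j$, and in the positive case one may read off, uniformly in $\alpha$, the witnessing measure-one sets and functions for the later coordinates. The collapse coordinates $k\leq i<n$, whose ordinals $\alpha_i$ are already fixed by $p$, are handled separately: using the $\alpha_i^{+\omega+2}$-closure of the relevant collapse I strengthen $f_i,g_i$ along a descending sequence so that membership in $D$ becomes insensitive to further strengthening there.

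Iterating over all $j\geq n$, and intersecting at each coordinate the $U_j$-large sets produced for the fewer-than-$\kappa_j$ many choices at earlier coordinates, I obtain a single condition $q\leq_k p$ carrying the homogenized data. I then check that $q$ is a genuine condition; the nontrivial point is that each modified $F_j^{q}$ still satisfies $[F_j^{q}]_{U_j}\in K_j$, which is arranged by only strengthening to functions representing conditions that still lie in the filter $K_j$. With $q$ fixed, clause (2) is immediate, since for extensions of a fixed length the homogenization makes ``$\in D$'' depend only on the restriction to level $k$; clause (1) follows because $Int(k,q,q^*)$ merely canonicalizes the collapse information above $\alpha_k$ to $q$'s direct form, which by the homogenization (together with openness of $D$) cannot leave $D$; and clause (3) is the instance $D=\{r\mid r\text{ decides }\varphi\}$, where the homogeneous value is exactly whether $\varphi$ is forced, so $q^*\force\varphi\iff q^{**}\force\varphi$.

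The hard part will be the bookkeeping in the fusion. One must make the coordinate-$j$ decision genuinely independent of the ordinals chosen at earlier coordinates, so that the measure-one sets amalgamate, while simultaneously respecting the constraint $[F_j]_{U_j}\in K_j$ as the $F_j$ are strengthened --- this is where the genericity of $K_j$ over $N_j$ (its being built to meet every $N_j$-dense set) is essential, since it guarantees that the strengthenings demanded by the $D$-decisions can still be chosen inside $K_j$. One also has to verify carefully that $Int(k,q,q^*)$ is well defined and that the unique direct extension witnessing $q^*\in D$ uses no information above $\alpha_k$ that $Int$ discards; this is precisely where the clause ``a direct extension does not add information on the collapses above $\alpha_k$'' in the definition is used. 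All of this goes through by the $\kappa_j$-completeness and normality of the $U_j$ and the closure of the collapses, exactly as in the original Magidor--Shelah argument.
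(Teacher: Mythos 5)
You should note at the outset that the paper contains no proof of this lemma: it is quoted from Magidor--Shelah, and the only proof-relevant material the paper supplies is the discussion following Lemma~\ref{lemma: Prikry property for Pn}, which isolates the two ingredients the cited argument runs on, namely (i) that every $\leq_k$-decreasing sequence of fewer than $\kappa$ conditions of the same length has a lower bound (the analogue of Lemma~\ref{lemma: closure of the k-ordering for Pn}), and (ii) that the $A$-parts and $F$-parts are closed under \emph{diagonal} intersections, which comes from the normality of the $U_j$. Your plan follows exactly that classical route: the coordinatewise homogenization, the treatment of the collapse coordinates in $[k,n)$ by descending sequences (this is ingredient (i), with the closure degree $\alpha_i^{+\omega+2}$ matching the number of $\restr k$-contexts, as in the paper's closure lemma), the use of the genericity of $K_j$ over $N_j$ to keep $[F_j^q]_{U_j}\in K_j$ while pre-deciding membership in $D$, and the derivation of clauses (1) and (3) from the homogenization giving clause (2) (indeed (1) is the instance of (2) with $q^{**}=Int(k,q,q^*)$, since the interpolation extends $q$, has the length of $q^*$ and the same $\restr k$).

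There is, however, one step that fails as literally written, and it occurs twice: you justify the amalgamation at coordinate $j$ by asserting that there are ``fewer than $\kappa_j$ many possible choices at earlier coordinates, so the intersection of the corresponding $U_j$-large sets stays in $U_j$.'' That count is wrong. The data that clause (2) forces you to homogenize away at level $j$ includes the interleaved collapse part $g_j$, which, once an ordinal $\alpha_j\in A_j$ is chosen, ranges over $\Coll(\kappa_{j-1}^{++},<\alpha_j)$; as $\alpha_j$ varies over $A_j$ there are exactly $\kappa_j$ many such $g_j$, not fewer. So one obtains $U_j$-large sets $B_{s,g}$ indexed by $\kappa_j$ many pairs, and what is needed is the set $\{\alpha\in A_j \mid \alpha\in B_{s,g} \text{ for all } s \text{ and all } g\in\Coll(\kappa_{j-1}^{++},<\alpha)\}$, i.e.\ a genuine diagonal intersection (with respect to an enumeration in which each $g$ precedes the ordinals above its bound); this lies in $U_j$ only by normality, which is precisely ingredient (ii) that the paper singles out. (The $f_j$-coordinate is different again: for fixed $\alpha$ there are $\kappa_j$ many conditions below $F_j(\alpha)$ and no intersection inside $U_j$ can handle them; there one must use openness of $D$ to pass to a single witnessing strengthening and the genericity of $K_j$ to realize these strengthenings by a function whose $U_j$-class is still in $K_j$ --- a mechanism you do invoke, correctly.) Since the only amalgamation device your text actually deploys is a plain $<\kappa_j$-fold intersection, with normality mentioned only in passing, this is a genuine gap; it is repaired exactly by the diagonal-intersection device your opening sentence alludes to but never uses.
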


\begin{lemma}(Magidor Shelah \cite[p. 789]{MagidorShelah}) In  $V^{\mathbb{P}},$ every $\kappa_n$ is equal to $\aleph_{\omega(n+1)+3},$  $\lambda= \aleph_{\omega^2}$ and 
$\lambda^+=\aleph_{\omega^2+1}.$
\end{lemma}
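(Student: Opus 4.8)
The plan is to read off the cardinal structure of $V^{\mathbb{P}}$ directly from the two families of Levy collapses occurring in a condition, namely $g_i\in\Coll(\kappa_{i-1}^{++},<\alpha_i)$ and $f_i\in\Coll(\alpha_i^{+\omega+2},<\kappa_i)$, and to use the Prikry property (Lemma~\ref{lemma: Prikry property for P}) to ensure that $\mathbb{P}$ collapses no cardinal beyond those explicitly collapsed by these maps.

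The first step is a factorization below a condition $p$ of length $n$ with $\alpha$-part $\langle\alpha_0,\ldots,\alpha_{n-1}\rangle$: the generic for $\mathbb{P}$ restricts to genuine generics for the finite product of Levy collapses $\prod_{i<n}\Coll(\kappa_{i-1}^{++},<\alpha_i)\times\Coll(\alpha_i^{+\omega+2},<\kappa_i)$, while the remaining pure part, governed by the measures $U_j$, is highly closed. The Prikry property supplies exactly this: every bounded subset of a $\kappa_{n-1}$ in $V^{\mathbb{P}}$ is decided by direct extensions and hence already lies in the extension by the finite stem collapses, the pure part of the tail being sufficiently closed. From this I would extract, in $V^{\mathbb{P}}$: (i) every cardinal in $(\kappa_{i-1}^{++},\alpha_i)$ is collapsed to $\kappa_{i-1}^{++}$, so $\alpha_i=(\kappa_{i-1}^{++})^+$; (ii) every cardinal in $(\alpha_i^{+\omega+2},\kappa_i)$ is collapsed to $\alpha_i^{+\omega+2}$, so $\kappa_i=(\alpha_i^{+\omega+2})^+$; and (iii) the cardinals $\alpha_i,\ldots,\alpha_i^{+\omega+2}$ and $\kappa_i,\kappa_i^+,\kappa_i^{++}$ are preserved --- the former block because $\Coll(\alpha_i^{+\omega+2},<\kappa_i)$ is $\alpha_i^{+\omega+2}$-closed and $\kappa_i$-c.c., the latter because $\kappa_i$ is inaccessible in the relevant model, $\Coll(\kappa_i^{++},<\alpha_{i+1})$ is $\kappa_i^{++}$-closed, and the tail is closed.

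Granting preservation, the index computation is a routine induction on $n$. From $\kappa_{-1}^{++}=\omega^{++}=\aleph_2$, fact (i) gives $\alpha_0=\aleph_3$ and fact (ii) gives $\kappa_0=\alpha_0^{+\omega+3}=\aleph_{3+\omega+3}=\aleph_{\omega+3}$, using $3+\omega=\omega$. Assuming $\kappa_{n-1}=\aleph_{\omega n+3}$, I get $\alpha_n=(\kappa_{n-1}^{++})^+=\aleph_{\omega n+6}$, and then $\kappa_n=(\alpha_n^{+\omega+2})^+=\aleph_{(\omega n+6)+\omega+3}=\aleph_{\omega(n+1)+3}$, again via $6+\omega=\omega$. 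Taking suprema, $\lambda=\sup_n\kappa_n=\aleph_{\sup_n\omega(n+1)}=\aleph_{\omega^2}$, which is a cardinal as a limit of the preserved cardinals $\kappa_n$.

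The step I expect to be the main obstacle is the preservation of $\lambda^+$, i.e. that $\mathbb{P}$ collapses no cardinal down to $\lambda$. For this I would verify that $\mathbb{P}$ is $\lambda^+$-c.c.: there are only $\lambda$ many possible $\alpha$-parts (finite sequences of cardinals below $\lambda$), for a fixed $\alpha$-part the stem-collapse coordinates lie in forcings of size $<\lambda$, and two conditions with the same stem amalgamate on the tail by intersecting the measure-one sets $A_j$ and meeting the $F_j$; a counting / $\Delta$-system argument then caps antichains at size $\lambda$. Since $\lambda=\aleph_{\omega^2}$ is preserved, the $\lambda^+$-c.c. yields that the $V$-cardinal $\lambda^+$ survives, so $\lambda^+=\aleph_{\omega^2+1}$ in $V^{\mathbb{P}}$. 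Alternatively, in the present setting one may appeal to the split $\mathbb{P}=\mathbb{C}_{fin}\ast\mathbb{P}^*$ with $\mathbb{C}_{fin}$ being $\lambda^+$-distributive and $\mathbb{P}^*$ being $\lambda$-centred (hence $\lambda^+$-c.c.), which preserves $\lambda^+$ directly.
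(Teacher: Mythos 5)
Your overall skeleton --- factor $\mathbb{P}$ below a condition into the finitely many stem collapses plus a highly closed pure part, use the Prikry property to show every bounded subset of $\lambda$ comes from the stem collapses, and then run the index induction --- is exactly the Magidor--Shelah argument that the paper merely cites, and your ordinal arithmetic ($3+\omega=\omega$, $6+\omega=\omega$, etc.) is correct. The genuine gap is in your primary route for the key step: $\mathbb{P}$ is \emph{not} $\lambda^+$-c.c., so the ``counting / $\Delta$-system argument'' you propose cannot exist. A condition of $\mathbb{P}$ carries a full-support infinite sequence $\langle g_j \mid j<\omega\rangle$ of collapse conditions, including the coordinates $j\geq lg(p)$ above the stem, and the ordering demands $g_j^p\leq g_j^q$ at \emph{every} coordinate. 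Hence two conditions with the same stem need not amalgamate: intersecting the $A_j$'s and meeting the $F_j$'s (via the filters $U_j$ and $K_j$) handles the measure-one part, but does nothing about incompatible $g_j$'s in the tail. Concretely, fix for each $j$ an antichain of size $\kappa_{j-1}$ inside $\Coll(\kappa_{j-1}^{++},<\min A_j)$ and let the $g$-part range over the product of these antichains: this produces an antichain in $\mathbb{P}$ of size $\prod_{j<\omega}\kappa_j=\lambda^{\aleph_0}\geq\lambda^+$ (K\"onig, since $\cf\lambda=\omega$).

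Your fallback sentence is therefore not an optional ``alternative'' --- it is the only one of your two routes that works, and it is precisely the route the paper's machinery is designed around: the problematic full-support $g$-part is factored out as $\mathbb{C}_{fin}$, which is $\lambda^+$-distributive (Lemma~\ref{lem: c_fin is distributive}, via the projections from the $\mathbb{C}_n$), while the quotient $\mathbb{P}^*=\mathbb{P}/\mathbb{C}_{fin}$ is $\lambda^+$-c.c.\ (the paper cites \cite[Prop.~4.5]{FontanellaMagidor}); note that the chain condition of $\mathbb{P}^*$ itself requires the generic filter for $\mathbb{C}_{fin}$ to tame the tail $g$-coordinates, so it genuinely lives in $V^{\mathbb{C}_{fin}}$ and is not a property of $\mathbb{P}$. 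With that substitution, the rest of your argument (the collapse intervals (i)--(ii), the preservation claims (iii) via closure and chain condition of the individual L\'evy collapses together with the Prikry property, and the induction giving $\kappa_n=\aleph_{\omega(n+1)+3}$ and $\lambda=\aleph_{\omega^2}$) is sound.
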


One can easily see that the poset $\mathbb{P}^*$ projects to $\mathbb{C}_{fin}.$ So, in $V^{\mathbb{C}_{fin}\ast \mathbb{S}\ast \mathbb{R}},$ we define $\mathbb{P^*}$ as the quotient forcing $\mathbb{P}/\mathbb{C}_{fin},$ namely the set of conditions $p\in \mathbb{P}$ whose $g$-part belongs to the generic for $\mathbb{C}_{fin};$ $\mathbb{P}^*$ is ordered as a subposet of $\mathbb{P}.$ Our final model is 
$$V^{\mathbb{C}_{fin}\ast \mathbb{S}\ast \mathbb{R}\ast \mathbb{P}^*}$$

We conclude this section by showing that $\mathbb{P}^*$ does not collapse the relevant cardinals. 

\begin{lemma} $\mathbb{P}^*$ is $\lambda^+$-c.c. in $V^{\mathbb{C}_{fin}\ast \mathbb{S}\ast \mathbb{R}\ast \mathbb{T}}$ and in particular in $V^{\mathbb{C}_{fin}\ast \mathbb{S}\ast \mathbb{R}}.$
\end{lemma}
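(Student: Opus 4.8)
The plan is to establish the chain condition in the intermediate model $V^{\mathbb{C}_{fin}}$, where $\mathbb{P}^*$ is actually defined, and then transfer it upward. The first observation is that $\mathbb{P}^*$, together with its ordering, lies in $V^{\mathbb{C}_{fin}}$ and is not changed by the further forcing $\mathbb{S}\ast\mathbb{R}\ast\mathbb{T}$ (no new conditions are added). By Corollary~\ref{coroll: directed closure Cfin} that forcing is equivalent to a $\lambda^+$-closed one, and a $\lambda^+$-closed forcing adds no antichain of size $\lambda^+$ to a $\lambda^+$-c.c.\ ground-model poset: given a putative new antichain one decides its first $\lambda^+$ members along a decreasing $\lambda^+$-sequence, using $\lambda^+$-closure at limits, thereby recovering a ground-model antichain of the same size. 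Hence it suffices to prove $\mathbb{P}^*$ is $\lambda^+$-c.c.\ in $V^{\mathbb{C}_{fin}}$; the assertion for $V^{\mathbb{C}_{fin}\ast\mathbb{S}\ast\mathbb{R}}$ follows ``in particular'' since that model sits between $V^{\mathbb{C}_{fin}}$ and $V^{\mathbb{C}_{fin}\ast\mathbb{S}\ast\mathbb{R}\ast\mathbb{T}}$ and compatibility in the fixed poset $\mathbb{P}^*$ is absolute among these models.

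Working in $V^{\mathbb{C}_{fin}}$, I would send each condition to its stem. Using $2^{\kappa_n}=\kappa_n^+$ and $\lambda=\sup_n\kappa_n$, the number of stems is at most $\lambda$: a stem of length $n$ is a finite tuple whose entries range over sets of size $<\lambda$ (the inaccessibles below $\kappa_n$ and the collapse conditions, which have size $<\lambda$ by GCH below $\lambda$), and $\mathbb{C}_{fin}$ is $\lambda^+$-distributive, so it adds no new stems. As $\lambda^+$ is regular, the pigeonhole principle yields $\lambda^+$ members $\{p_\xi : \xi<\lambda^+\}$ of a putative antichain sharing a single stem $s$ of length $n$. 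For two such conditions the ``upper parts'' always amalgamate: the sets $A_j$ can be intersected (each $A_j\in U_j$ and $U_j$ is an ultrafilter), and the functions $F_j$ can be merged because $[F_j^{p_\xi}]_{U_j},[F_j^{p_\eta}]_{U_j}\in K_j$ and $K_j$ is a filter, so they have a common lower bound in $K_j$ and, by the {\L}o\'s theorem, $F_j^{p_\xi}(\alpha)$ and $F_j^{p_\eta}(\alpha)$ are compatible on a $U_j$-large set. Consequently $p_\xi$ and $p_\eta$ are incompatible in $\mathbb{P}^*$ only when their $g$-parts are incompatible, so $p_\xi\mapsto(g\text{-part of }p_\xi)$ carries the antichain into an antichain of size $\lambda^+$ in the quotient $\mathbb{C}_0/\mathbb{C}_{fin}$ (conditions of $\mathbb{C}_0$ whose $\sim$-class lies in the generic, ordered coordinatewise, agreeing with $s$ on the first $n$ coordinates).

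Everything thus reduces to the fact that $\mathbb{C}_0/\mathbb{C}_{fin}$ is $\lambda^+$-c.c.\ in $V^{\mathbb{C}_{fin}}$ — precisely the property already announced for $\mathbb{C}_n/\mathbb{C}_{fin}$. The mechanism is that passing to the quotient by the eventual-domination forcing absorbs the tail of each condition into the generic filter: any two classes in the generic are compatible in $\mathbb{C}_{fin}$, so their representatives agree compatibly on a final segment of $\omega$, and therefore all pairwise incompatibilities in the quotient are concentrated on finitely many coordinates. On any fixed finite set of coordinates the relevant collapse product has size $<\lambda$, so a $\Delta$-system plus pigeonhole argument on the ``active'' coordinates extracts two compatible conditions from any $\lambda^+$ of them. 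I expect the genuine difficulty to lie exactly in the uniformity of this last step: extracting from a family of size $\lambda^+$ a subfamily of the same size whose pairwise incompatibilities all live on one fixed finite set of coordinates. This is where the genericity and the $\lambda^+$-distributivity of $\mathbb{C}_{fin}$ have to be exploited, reducing below a single condition of $\mathbb{C}_{fin}$ to a situation with a common eventual-domination bound and hence a genuine finite support on which the $\Delta$-system lemma applies.
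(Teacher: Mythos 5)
Your first paragraph is exactly the paper's proof, and it is correct: the paper quotes \cite[Proposition 4.5]{FontanellaMagidor} for the chain condition of $\mathbb{P}^*$ in $V^{\mathbb{C}_{fin}}$ and then transfers it upward using Corollary~\ref{coroll: directed closure Cfin}, precisely by the argument you give (a forcing with a dense $\lambda^+$-closed subset cannot create a new antichain of size $\lambda^+$ in a poset of the ground model, by deciding the members of a putative antichain along a decreasing sequence), and the ``in particular'' clause is handled exactly as you say, by upward absoluteness of incompatibility together with preservation of $\lambda^+$. Your reduction of the base case is also sound as far as it goes: pigeonhole on stems, amalgamation of the $A$- and $F$-parts (this is the same argument as Remark~\ref{rmk: conditions of P** are pw compatible}: intersect the measure-one sets, merge the $F$-parts using that $K_j$ is a generic filter and {\L}o\'s), and the observation that the coordinatewise union of two coordinatewise-compatible $g$-parts has its class in $G_{\mathbb{C}_{fin}}$ (a common extension of the two classes in the filter eventually dominates the union, so the union's class is in the filter by upward closure). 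Thus, modulo an inessential index shift, an antichain of size $\lambda^+$ in $\mathbb{P}^*$ does yield one in $\mathbb{C}_0/\mathbb{C}_{fin}$.

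The genuine gap is in the last step, and it is exactly where you flag it. The fact that $\mathbb{C}_0/\mathbb{C}_{fin}$ is $\lambda^+$-c.c.\ in $V^{\mathbb{C}_{fin}}$ is of the same nature as the fact you set out to avoid citing: it is announced in Section~\ref{sec: definition of the model} (``the quotient forcing is $\lambda^+$-c.c., even after further forcing'') but never proved in this paper; it too is imported from \cite{FontanellaMagidor}. Your sketched mechanism for it does not close. Reducing ``below a single condition of $\mathbb{C}_{fin}$'' does not produce a genuine finite support: if $[d]$ lies below the classes of the members $c_\xi$ of the family, then $d(m)\leq c_\xi(m)$ only for $m\geq N_\xi$, where $N_\xi$ depends on $\xi$. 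One can in fact show (via separativity, $\lambda^+$-distributivity and genericity) that the generic filter is $\lambda^+$-directed, so such a common $[d]$ exists for any $\lambda$ many members simultaneously; but then the pigeonhole fails, because the pairs $(N_\xi, c_\xi\restr N_\xi)$ range over a set of size cofinal in $\lambda$, and $\lambda$ many conditions are not enough to force a repetition. To run your $\Delta$-system/pigeonhole argument one would need a single eventual-domination bound valid for $\lambda^+$ many members at once, which no condition of $\mathbb{C}_{fin}$ can supply (that would make the generic filter essentially principal). Supplying the missing uniformity is precisely the nontrivial content of \cite[Proposition 4.5]{FontanellaMagidor}; the paper cites it rather than reproving it, and your proof is complete only if you do the same -- either for $\mathbb{P}^*$ directly, as the paper does, or for the quotient $\mathbb{C}_0/\mathbb{C}_{fin}$ to which you have correctly reduced it.
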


\begin{proof} In \cite[Proposition 4.5]{FontanellaMagidor} it is proven that $\mathbb{P}^*$ is $\lambda^+$-c.c. in $V^{\mathbb{C}_{fin}}$ (the statement of \cite[Proposition 4.5]{FontanellaMagidor} is actually stronger). In Corollary~\ref{coroll: directed closure Cfin} we showed that 
$\mathbb{S}\ast\mathbb{R}\ast\mathbb{T}$ is $\lambda^+$-closed in $V^{\mathbb{C}_{fin}}.$ It follows that $\mathbb{P}^*$ remains $\lambda^+$-c.c. even in $V^{\mathbb{C}_{fin}\ast \mathbb{S}\ast \mathbb{R}\ast \mathbb{T}}.$ 

\end{proof}


\begin{lemma} In 
$V^{\mathbb{C}_{fin}\ast \mathbb{S}\ast \mathbb{R}\ast \mathbb{P}^*},$ every $\kappa_n$ is equal to $\aleph_{\omega(n+1)+3},$  $\lambda= \aleph_{\omega^2}$ and 
$\lambda^+=\aleph_{\omega^2+1}.$
\end{lemma}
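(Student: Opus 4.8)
The plan is to reduce the computation to the cardinal structure of $V^{\mathbb{P}}$, which is given by the Magidor--Shelah computation of the cardinals in $V^{\mathbb{P}}$ stated above, and then to argue that the extra forcing $\mathbb{S}\ast\mathbb{R}$ does not disturb the cardinals up to $\lambda^+$. Recall that $\mathbb{P}$ projects onto $\mathbb{C}_{fin}$ with quotient $\mathbb{P}^*$, so $\mathbb{P}\cong\mathbb{C}_{fin}\ast\mathbb{P}^*$ and hence $V^{\mathbb{C}_{fin}\ast\mathbb{P}^*}=V^{\mathbb{P}}$, where $\kappa_n=\aleph_{\omega(n+1)+3}$, $\lambda=\aleph_{\omega^2}$ and $\lambda^+=\aleph_{\omega^2+1}$.

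First I would rearrange the iteration. Both $\mathbb{S}\ast\mathbb{R}$ and $\mathbb{P}^*$ are forcing notions lying in $W=V^{\mathbb{C}_{fin}}$, since the poset $\mathbb{P}^*$ depends only on the generic filter for $\mathbb{C}_{fin}$ (its conditions are those $p\in\mathbb{P}$ whose $g$-part belongs to the $\mathbb{C}_{fin}$-generic). Let $H$ be $\mathbb{S}\ast\mathbb{R}$-generic over $W$ and let $G^*$ be $\mathbb{P}^*$-generic over $W[H]$, so that $M:=V^{\mathbb{C}_{fin}\ast\mathbb{S}\ast\mathbb{R}\ast\mathbb{P}^*}=W[H][G^*]$. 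By the product lemma, since both posets belong to $W$, the filter $G^*$ is also $\mathbb{P}^*$-generic over $W$, the filter $H$ is $\mathbb{S}\ast\mathbb{R}$-generic over $W[G^*]$, and $W[H][G^*]=W[G^*][H]$. Now $W[G^*]$ is a $\mathbb{P}^*$-generic extension of $W=V^{\mathbb{C}_{fin}}$, hence $W[G^*]=V^{\mathbb{C}_{fin}\ast\mathbb{P}^*}=V^{\mathbb{P}}$, which has the desired cardinal structure. Thus $M=V^{\mathbb{P}}[H]$, and it suffices to show that forcing $\mathbb{S}\ast\mathbb{R}$ over $V^{\mathbb{P}}$ preserves all cardinals $\leq\lambda^+$.

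To this end I would show that $\mathbb{S}\ast\mathbb{R}$ is $\lambda^+$-distributive over $V^{\mathbb{P}}=W[G^*]$. Over $W$, by Corollary~\ref{coroll: directed closure Cfin} the forcing $\mathbb{S}\ast\mathbb{R}\ast\mathbb{T}$ is equivalent to a $\lambda^+$-closed poset, while $\mathbb{P}^*$ is $\lambda^+$-c.c.\ in $V^{\mathbb{C}_{fin}}$. By Easton's lemma, a $\lambda^+$-closed forcing remains $\lambda^+$-distributive after forcing with a $\lambda^+$-c.c.\ poset; hence $\mathbb{S}\ast\mathbb{R}\ast\mathbb{T}$ is $\lambda^+$-distributive over $W[G^*]$. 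Since any sequence of ordinals of length $<\lambda^+$ added by $\mathbb{S}\ast\mathbb{R}$ over $W[G^*]$ survives into a further $\mathbb{T}$-extension, i.e.\ into an $\mathbb{S}\ast\mathbb{R}\ast\mathbb{T}$-extension of $W[G^*]$, which has no new such sequences, it follows that $\mathbb{S}\ast\mathbb{R}$ itself is $\lambda^+$-distributive over $W[G^*]$. Consequently $\mathbb{S}\ast\mathbb{R}$ adds no new bounded subset of $\lambda^+$ and collapses no cardinal $\leq\lambda^+$, so the cardinals of $M=V^{\mathbb{P}}[H]$ at or below $\lambda^+$ coincide with those of $V^{\mathbb{P}}$. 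Therefore in $M$ we still have $\kappa_n=\aleph_{\omega(n+1)+3}$, $\lambda=\aleph_{\omega^2}$ and $\lambda^+=\aleph_{\omega^2+1}$.

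The main obstacle is exactly the distributivity preservation in the last step: $\mathbb{S}\ast\mathbb{R}$ is only $\lambda^+$-distributive over $W$, and mere distributivity is not in general preserved by $\lambda^+$-c.c.\ forcing. The crucial point that makes the argument go through is that $\mathbb{S}\ast\mathbb{R}$ appears as an initial segment of $\mathbb{S}\ast\mathbb{R}\ast\mathbb{T}$, which is genuinely equivalent to a $\lambda^+$-closed forcing, so Easton's lemma can be applied to the latter and then transferred down to $\mathbb{S}\ast\mathbb{R}$.
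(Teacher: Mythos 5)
Your proof is correct and follows essentially the same route as the paper: the same commutation $V^{\mathbb{C}_{fin}\ast\mathbb{S}\ast\mathbb{R}\ast\mathbb{P}^*}=V^{\mathbb{C}_{fin}\ast\mathbb{P}^*\ast\mathbb{S}\ast\mathbb{R}}=V^{\mathbb{P}\ast\mathbb{S}\ast\mathbb{R}}$, the same appeal to the Magidor--Shelah computation of the cardinals of $V^{\mathbb{P}}$, and the same use of the $\lambda^+$-closed dense subset of $\mathbb{S}\ast\mathbb{R}\ast\mathbb{T}$ together with the $\lambda^+$-c.c.\ of $\mathbb{P}^*$. The only difference is that you make explicit, via Easton's lemma and the passage to a further $\mathbb{T}$-extension, the preservation step that the paper compresses into ``in particular, $\mathbb{S}\ast\mathbb{R}$ does not collapse cardinals below $\lambda^+$,'' which indeed must be verified over $V^{\mathbb{P}}$ rather than over $V^{\mathbb{C}_{fin}}$.
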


\begin{proof} First we point out that 
$$V^{\mathbb{C}_{fin}\ast \mathbb{S}\ast \mathbb{R}\ast \mathbb{P}^*}=
V^{\mathbb{C}_{fin}\ast \mathbb{P}^*\ast \mathbb{S}\ast \mathbb{R}}
= V^{\mathbb{P}\ast \mathbb{S}\ast \mathbb{R}}$$ 
Magidor and Shelah showed that in $V^{\mathbb{P}},$ every $\kappa_n$ is equal to $\aleph_{\omega(n+1)+3},$  $\lambda= \aleph_{\omega^2}$ and 
$\lambda^+=\aleph_{\omega^2+1}.$ We proved that $\mathbb{S}\ast \mathbb{R}\ast \mathbb{T}$ contains a $\lambda^+$-closed dense subset, in particular, $\mathbb{S}\ast \mathbb{R}$ does not collapse cardinals below $\lambda^+,$ thus the conclusion follows. \end{proof}

\section{Square and Delta reflection at \texorpdfstring{$\aleph_{\omega^2+1}$}{aleph omega 2}}

In this section we prove Theorem~\ref{thm: main theorem}, namely we want to prove that both $\square(\aleph_{\omega^2+1})$ and $\Delta_{\aleph_{\omega^2}, \aleph_{\omega^2+1}}$ hold in the model 
$V^{\mathbb{C}_{fin}\ast \mathbb{S}\ast \mathbb{R}\ast \mathbb{P}^*}.$ 
We already pointed out that $\lambda^+$ is $\aleph_{\omega^2+1}$ in this model, therefore we want to show that both $\Delta_{\lambda, \lambda^+}$ and $\square(\lambda^+)$ hold. First we argue that $\square(\lambda^+)$ holds. By Corollary~\ref{coroll: existence of square sequence}, we know that a square sequence $\mathcal{C}$ with no threads exists in 
$V^{\mathbb{C}_{fin}\ast \mathbb{S}\ast \mathbb{R}}.$ Using the fact that $\mathbb{P}^*$ is $\lambda^+$-c.c.\ we can apply Lemma~\ref{lemma: preserving the square sequence} to shows that $\mathbb{P}^*$ does not add a thread. Thus $\square(\lambda^+)$ holds in $V^{\mathbb{C}_{fin}\ast \mathbb{S}\ast \mathbb{R}\ast \mathbb{P}^*}.$   

Now, we want to prove that the Delta reflection holds at $\aleph_{\omega^2+1}$ in this model. The proof is along the lines of \cite{MagidorShelah}. We are going to prove that the Delta reflection holds at $\lambda^+$ in a $\mathbb{T}$-generic extension of this model. Lemma \ref{lem: c.c. preserves generically stationary preservation} implies that $\mathbb{T}$ generically preserves stationary sets also after forcing with $\mathbb{P}^*.$
Therefore, we can apply Lemma \ref{lem: avoiding T}, hence if $V^{\mathbb{C}_{fin}\ast \mathbb{S}\ast \mathbb{R}\ast \mathbb{P}^*\ast \mathbb{T}}$ is a model of the Delta reflection at $\aleph_{\omega^2+1},$ then so is $V^{\mathbb{C}_{fin}\ast \mathbb{S}\ast \mathbb{R}\ast \mathbb{P}^*}.$ Clearly, we have 
$$V^{\mathbb{C}_{fin}\ast \mathbb{S}\ast \mathbb{R}\ast \mathbb{P}^*\ast \mathbb{T}}= V^{\mathbb{C}_{fin}\ast \mathbb{S}\ast \mathbb{R}\ast \mathbb{T}\ast \mathbb{P}^*}.$$ 

For convenience, we fix generic filters: 
\begin{itemize}
\item $G_{\mathbb{C}_{fin}}\subseteq\mathbb{C}_{fin}$ is a $V$-generic filter;
\item $G_\mathbb{S}\subseteq \mathbb{S}$ is a $V[G_{\mathbb{C}_{fin}}]$-generic filter;  
 \item $G_{\mathbb{R}}\subseteq\mathbb{R}$ is a $V[(G_\mathbb{S}\times G_{\mathbb{C}_{fin}})]$-generic filter; 
\item $G_{\mathbb{T}}\subseteq\mathbb{T}$ is a $V[(G_\mathbb{S}\times G_{\mathbb{C}_{fin}})\ast G_\mathbb{R}]$-generic filter. 
\end{itemize}

We start by working in $\bar{W}= V[G_\mathbb{S}\times G_{\mathbb{C}_{fin}}][G_\mathbb{R}][G_\mathbb{T}].$
Let $p\in \mathbb{P}^*$ and let $\dot{S}$ and $\dot{A}$ be $\mathbb{P}^*$-names such that: 
$$\begin{array}{rl}
p\force & ``\dot{A}\textrm{ is an algebra on $\lambda^+$ 
with $\mu<\lambda^+$ operations}\\
& \textrm{ and }\dot{S}\subseteq \lambda^+\textrm{ is  a stationary set }''\\ 
\end{array}$$

Without loss of generality, the cofinalities of the elements of $\dot{S}$ are bounded below $\lambda$. 
Let $l$ be large enough so that $\mu <\kappa_l$ and $p\force \forall \alpha\in \dot{S}\ (cof(\alpha)<\check{\kappa_l}).$ We can assume without loss of generality that $lg(p)>l;$ let $n$ be the length of $p.$ 

\begin{remark}\label{rmk: one} We argue for a moment in $\bar{W}^{\mathbb{P}^*}.$ 
For every $\alpha\in \dot{S}^{G_{\mathbb{P}^*}}$ there is a condition $q_\alpha$ in the generic for $\mathbb{P}^*$ such that $q_\alpha\force \alpha\in \dot{S}.$ There are less than $\lambda^+$ many possible stems for the $q_\alpha$'s, therefore the conditions $q_\alpha$ have a fixed stem $s$ for stationary many $\alpha$ in $\dot{S}^{G_{\mathbb{P}^*}}.$ Without loss of generality we can assume that the stem of $p$ extends $s,$ hence 
$$ p \force_{\mathbb{P}^*}^{\bar{W}}\ \dot{S^{\prime}}:= \{\alpha<\lambda^+;\ \exists q\in \dot{G}_{\mathbb{P}^*}\ \stem(q)= \stem(p)\land q\force \alpha\in \dot{S}\}\textrm{ is stationary}$$     
\end{remark}

It follows that if $q\leq p$ and $q\force \alpha\in \dot{S^{\prime}}$ for some $\alpha,$ then there is an $n$-length preserving extension $q^*$ of $p$ such that $q\leq q^*\leq p$ and $q^*\force \alpha\in \dot{S^{\prime}}.$

Now, we force with $\mathbb{C}_n/G_{\mathbb{C}_{fin}}$ to get a generic filter $G_n$ for $\mathbb{C}_n$ such that $G_n$ projects to $G_{\mathbb{C}_{fin}}$ and $\langle g_i^p:\ i\geq n\rangle\in \mathbb{C}_n.$ Let $\bar{W}_n:= V[G_\mathbb{S}\times G_{\mathbb{C}_n}][G_\mathbb{R}][G_{\mathbb{T}}],$ the main reason for moving to this model is that here $\kappa_n$ is supercompact. The main idea of the proof is the following: we are going to define in $\bar{W}_n$ a ``fake'' version of the stationary set $\dot{S}$ and a ``fake'' version of the algebra $\dot{A}$ (formally the fake version of $\dot{A}$ will be defined in a further extension of $\bar{W}_n,$ but we will deal with that later), then we will use the supercompactness of $\kappa_n$ to find a subalgebra of the fake algebra of order type $<\kappa_n$ such that the fake version of the stationary set reflects to such subalgebra. Finally, we will show that the fake version of $\dot{A}$ and the fake version of $\dot{S}$ are close enough to $\dot{S}$ and $\dot{A}$ so that we will be able to extend $p$ to a condition that forces the conclusion of the Delta reflection for $\dot{S}$ and $\dot{A}.$   

In $\bar{W}_n$ we define a subposet $\mathbb{P}_n^*$ of $\mathbb{P}^*.$ 
$$\mathbb{P}_n^*:= \{q\in \mathbb{P}^*;\ lg(q)\geq n \textrm{ and }\langle g_i^q:\ n\leq i<\omega\rangle\in G_n\}.$$

     \begin{lemma} The forcing $(\mathbb{C}_n/G_{\mathbb{C}_{fin}})\ast \mathbb{P}_n^*$ projects to $\mathbb{P}^*$
 \end{lemma}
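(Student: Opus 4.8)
The plan is to exhibit an explicit projection $\pi$ and verify its two defining properties — order preservation and the refinement (lifting) property — working on a dense subset of the two-step iteration. First I would reduce to check-names: since every element of $\mathbb{P}_n^*$ is an element of $\mathbb{P}$ and hence lies in $V$, below any condition $\langle c, \dot q\rangle$ I can find $c_0 \leq c$ in $\mathbb{C}_n/G_{\mathbb{C}_{fin}}$ deciding $\dot q = \check q$ for a fixed $q \in \mathbb{P}$ with $lg(q) \geq n$. The requirement $c_0 \force \check q \in \mathbb{P}_n^*$ amounts to $c_0 \leq_{\mathbb{C}_n}$ (the tail $g$-part of $q$), i.e.\ $c_0$ forces $q$'s tail $g$-part into $\dot G_n$. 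Thus the set $D$ of conditions of the form $\langle c, \check q\rangle$ with $c \leq_{\mathbb{C}_n}$ (tail $g$-part of $q$) is dense, and it suffices to define $\pi$ on $D$.

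Next I would set $\pi(c, \check q) = q^\dagger$, where $q^\dagger$ keeps the $\alpha$-part, the $f_i$, the measure-one sets $A_j$, the functions $F_j$ and the lower collapses $g_i$ ($i < n$) of $q$, but replaces each tail collapse $g_i$ ($i \geq n$) by the restriction of the $i$-th coordinate of $c$ below the relevant bound ($\alpha_i^q$ if $i < lg(q)$, and $\min A_i^q$ if $i \geq lg(q)$). Since $c \leq$ (tail $g$-part of $q$), each such restriction extends the original $g_i^q$, so $q^\dagger$ is a legitimate condition of $\mathbb{P}$; its $g$-part, viewed in $\mathbb{C}_{fin}$, is weaker than $\iota(c) \in G_{\mathbb{C}_{fin}}$, hence lies in $G_{\mathbb{C}_{fin}}$ by upward closure of the generic filter, so $q^\dagger \in \mathbb{P}^*$. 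Order preservation is then a coordinate-by-coordinate check against the definition of $\leq_{\mathbb{P}}$: if $\langle c_1, \check q_1\rangle \leq \langle c_0, \check q_0\rangle$ then $q_1 \leq_{\mathbb{P}} q_0$ and $c_1 \leq c_0$, and the only nontrivial clause concerns the tail $g$'s, which holds because the relevant bounds only grow as one passes from $q_0$ to the longer, stronger $q_1$ while the coordinates of $c_1$ extend those of $c_0$.

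Finally, for the refinement property I would take $\langle c, \check q\rangle \in D$ and any $p^* \leq_{\mathbb{P}^*} \pi(c, \check q)$, and put $q' := p^*$ together with $c'$ defined coordinatewise as the common extension in $\mathbb{C}_n$ of $c$ and the tail $g$-part of $p^*$. These two are always compatible, because the $i$-th tail collapse of $p^*$ is bounded below the Prikry point $\alpha_i^{p^*}$ (or below $\min A_i^{p^*}$) and extends the corresponding restriction of $c_i$, so their union conflicts nowhere; moreover $c'$ restricted to each bound recovers exactly $p^*$'s collapses, whence $\pi(c', \check{p^*}) = p^*$. Since $c' \leq c$, $c' \leq$ (tail $g$-part of $p^*$) and $lg(p^*) \geq lg(q) \geq n$, we get $\langle c', \check{p^*}\rangle \in D$ below $\langle c, \check q\rangle$ (using $p^* \leq q^\dagger \leq q$). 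The step I expect to require the most care is verifying that $c'$ genuinely belongs to the quotient $\mathbb{C}_n/G_{\mathbb{C}_{fin}}$, that is, $\iota(c') \in G_{\mathbb{C}_{fin}}$: this is exactly where the hypothesis $p^* \in \mathbb{P}^*$ enters, since $\iota(c')$ is the greatest lower bound in $\mathbb{C}_{fin}$ of $\iota(c)$ and the $g$-part of $p^*$, both of which lie in $G_{\mathbb{C}_{fin}}$, so their meet does as well. Everything else reduces to routine bookkeeping with the index shift between the $g$-indexing of $\mathbb{P}$ and the factors of $\mathbb{C}_n$.
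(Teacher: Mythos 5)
Your overall strategy --- reduce to check names on a dense set, define an explicit coordinatewise map, and verify order preservation plus the lifting property --- is the same as the paper's, and your treatment of the coordinates $n \le i < lg(q)$ (replacing $g_i^q$ by $c_i \restr \alpha_i^q$) agrees with the paper's definition. But there is a genuine gap at the coordinates $i \geq lg(q)$: by replacing $g_i^q$ with $c_i \restr \min A_i^q$ you discard the part of $c_i$ lying above $\min A_i^q$, and that is exactly the information needed for the lifting property. Concretely, a condition $p^* \leq_{\mathbb{P}^*} \pi(c,\check q)$ may choose a new Prikry point $\alpha_i^{p^*} \in A_i^q$ with $\alpha_i^{p^*} > \min A_i^q$, together with a collapse $g_i^{p^*} \in \Coll(\kappa_{i-1}^{++}, <\alpha_i^{p^*})$ that extends $c_i \restr \min A_i^q$ but disagrees with $c_i$ at some point of $\dom (c_i)$ lying in $[\min A_i^q, \alpha_i^{p^*})$; nothing in the definition of $\leq_{\mathbb{P}}$ rules this out, since $\pi(c,\check q)$ only remembers $c_i$ below $\min A_i^q$. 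For such a $p^*$ no lift exists: any $\langle c', \dot q'\rangle \leq \langle c, \check q\rangle$ has $c_i' \supseteq c_i$, and the $i$-th $g$-coordinate of $\pi(\langle c', \dot q'\rangle)$ is a restriction of $c_i'$; if the restriction bound lies above the disagreement point, that coordinate contradicts $g_i^{p^*}$, and if it lies below, it cannot contain the domain of $g_i^{p^*}$ --- either way $\pi(\langle c', \dot q'\rangle) \not\leq p^*$. So your map is not a projection, and your claim that $c_i$ and the tail collapses of $p^*$ ``conflict nowhere'' fails precisely at these coordinates.

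The paper's map differs at exactly this point: for $i \geq lg(q)$ it keeps $g_i = c_i$ in full, so that any later extension of the projected condition is forced to extend all of $c_i$, which is what makes your $c'$ well defined in the lifting step. The price is a detail the paper itself glosses over: $c_i$ need not belong to $\Coll(\kappa_{i-1}^{++}, <\min A_i^q)$, so the image is not literally a condition of $\mathbb{P}$ unless one also shrinks the measure-one set to $\{\alpha \in A_i^q \mid c_i \in \Coll(\kappa_{i-1}^{++}, <\alpha)\}$. This shrinking is harmless: $\vert c_i\vert < \kappa_{i-1}^{++} < \kappa_i$ and $\kappa_i$ is regular, so $\dom (c_i)$ is bounded below $\kappa_i$ and the shrunken set is $A_i^q$ minus a bounded initial segment, hence still in $U_i$. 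With this amended definition the rest of your argument (the check-name reduction, order preservation, membership of the image in $\mathbb{P}^*$ via upward closure of $G_{\mathbb{C}_{fin}}$, and the greatest-lower-bound argument showing $\iota(c') \in G_{\mathbb{C}_{fin}}$) goes through as you wrote it.
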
 

\begin{proof} We define a projection $\pi$ as follows. 
Let $\langle c, r\rangle$ be a condition in $(\mathbb{C}_n/G_{\mathbb{C}_{fin}})\ast \mathbb{P}_n^*.$ Let $c=\langle c_i;\ i\geq n\rangle$ and let $m$ be the length of $r$ (we have $m\geq n$ by definition of $\mathbb{P}_n^*$). For every $i<n,$ let $c_i=\emptyset,$ then, since $c\in \mathbb{C}_n/G_{\mathbb{C}_{fin}},$ it means that 
the equivalence class of $\langle c_i:\ i<\omega \rangle$ is in $G_{\mathbb{C}_{fin}}.$ Moreover, since $c\force r\in \mathbb{P}_n^*,$ we have $\langle c_i;\ i\geq n\rangle\leq \langle g_i^r;\ i\geq n\rangle.$ We define $\pi(\langle c,r\rangle)$ as the condition $q$ of length $m$ obtained from $r$ by replacing the $g$-part of $r$ with the sequence $\langle g_i;\ i<\omega\rangle,$ where 
\begin{itemize}
\item $g_i= g_i^r,$ for $i<n;$
\item $g_i= c_i,$ for $i\geq m;$ 
\item if $n<m,$ then we let $g_i= c_i\restr \alpha_i,$ for $n\leq i<m.$ 
\end{itemize}
Then $q$ is clearly in $\mathbb{P}^*$ because its $g$-part $\langle g_i;\ i<\omega\rangle$ is equivalent to $\langle c_i:\ i<\omega\rangle,$ and the equivalence class of $\langle c_i:\ i<\omega\rangle$ is in $G_{\mathbb{C}_{fin}}.$ It is not difficult to verify that $\pi$ is a projection. \end{proof}

It follows, by this lemma, that if we force with $\mathbb{P}_n^*$ over $\bar{W}_n,$ we introduce a generic filter for $\mathbb{P}^*$ over $\bar{W},$ thus we can consider $\bar{W}^{\mathbb{P}^*}$ to be a submodel of 
$\bar{W}_n^{\mathbb{P}_n^*}.$ 

Now, we prove some useful properties of $\mathbb{P}_n^*.$

\begin{lemma}\label{lemma: closure of the k-ordering for Pn} Let $\kappa=\kappa_l$ for some $l<\omega,$ and let $\langle p_\delta;\ \delta<\eta\rangle$ be a $\leq_k$-decreasing sequence of conditions in $\mathbb{P}_n^*$ all of length $l<\omega.$ Suppose $\eta\leq \alpha_k^p$ if $k<l$ and $\eta<\kappa$ if $k=l.$ Then the sequence has a lower bound in $\mathbb{P}_n^*.$   
\end{lemma}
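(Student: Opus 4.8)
The plan is to build the lower bound $p^*$ coordinate by coordinate, exploiting that along a $\leq_k$-decreasing sequence the stem $\langle \alpha_0,g_0,f_0,\dots,\alpha_{k-1},g_{k-1},f_{k-1},g_k\rangle$ is constant and the common length $l$ freezes all the $\alpha_i$ with $i<l$. Thus only three families of coordinates move as $\delta$ grows: the Levy-collapse parts $f_i$ for $k\le i<l$ and $g_i$ for $k<i<l$ in the finite part; the $g$-tail $\langle g_i^{p_\delta}\mid i\ge n\rangle\in G_n$; and the measure-one data $(A_j^{p_\delta},F_j^{p_\delta})$ for $j\ge l$. I would let $p^*$ agree with the common stem and treat each moving family separately.

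First the finite Levy-collapse coordinates. For $k\le i<l$ the conditions $f_i^{p_\delta}$ decrease inside $\Coll(\alpha_i^{+\omega+2},<\kappa_i)$, which is $\alpha_i^{+\omega+2}$-closed, and for $k<i<l$ the $g_i^{p_\delta}$ decrease inside $\Coll(\kappa_{i-1}^{++},<\alpha_i)$, which is $\kappa_{i-1}^{++}$-closed. Since $\alpha_k\le\alpha_i$ and $\kappa_k\le\kappa_{i-1}$ for these $i$, the hypothesis $\eta\le\alpha_k$ (in the case $k<l$) places $\eta$ strictly below every relevant closure point, so greatest lower bounds exist; this is exactly where the bound $\eta\le\alpha_k$ is spent. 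When $k=l$ this family is empty. The $g$-tail is a $\le$-decreasing sequence of members of $G_n$, and using that $G_n$ is generic for $\mathbb{C}_n$ (which is $\kappa_n^{++}$-closed, its tail $\prod_{m\ge l}\Coll(\kappa_m^{++},<\kappa_{m+1})$ being even $\kappa_l^{++}$-closed) one finds a common lower bound lying in $G_n$, keeping $p^*$ inside $\mathbb{P}_n^*$.

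The delicate family is the measure-one tail. For $j\ge l$ I would first set $A_j=\bigcap_{\delta<\eta}A_j^{p_\delta}$, intersected with a tail of $\kappa_j$ above $\eta$; since $\eta<\kappa\le\kappa_j$ and $U_j$ stays $\kappa_j$-complete in the model where the sequence lives, $A_j\in U_j$. For the functions I would avoid a naive pointwise lower bound, since the resulting $F_j$ must still satisfy $[F_j]_{U_j}\in K_j$. Instead I would pass to the ultrapower: the sequence $\langle[F_j^{p_\delta}]_{U_j}\mid\delta<\eta\rangle$ is a decreasing chain in $\Coll^{N_j}(\kappa_j^{+\omega+2},<\pi_j(\kappa_j))$, which is $\kappa_j^{+\omega+2}$-closed in $N_j$; as $\eta<\kappa_j$ this chain has a lower bound, and by genericity of $K_j$ over $N_j$ such a lower bound $c^*$ can be found in $K_j$. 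Choosing a representative $F_j$ with $[F_j]_{U_j}=c^*$ and thinning $A_j$ to the $U_j$-large set $\{\alpha : F_j(\alpha)\le F_j^{p_\delta}(\alpha)\text{ for all }\delta<\eta\}$ (using $\kappa_j$-completeness again to intersect $\eta<\kappa_j$ many $U_j$-large sets) makes $F_j$ a genuine pointwise lower bound while retaining $[F_j]_{U_j}\in K_j$. Assembling the stem, the finite lower bounds, the $g$-tail bound and the pairs $(A_j,F_j)$ then yields a condition $p^*\in\mathbb{P}_n^*$ below every $p_\delta$.

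I expect the main obstacle to be precisely this tail step, for two intertwined reasons. First, one must keep $[F_j]_{U_j}\in K_j$ and $A_j\in U_j$ simultaneously, which forces the two-stage argument (a lower bound in $K_j$ from the closure of the collapse in $N_j$, then a thinning of $A_j$ from completeness of $U_j$) rather than a single pointwise construction. Second, the sequence $\langle p_\delta\mid\delta<\eta\rangle$ lives in $\bar{W}_n$ while $U_j$, $\pi_j$, $N_j$ and $K_j$ are objects of $V$, so I must check that the $\kappa_j$-completeness of $U_j$ and the $\kappa_j^{+\omega+2}$-closure of $\Coll^{N_j}(\dots)$ survive into $\bar{W}_n$ for sequences of length $\eta<\kappa$. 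This I would verify by decomposing the passage $V\to\bar{W}_n$: the outer forcing $\mathbb{S}\ast\mathbb{R}\ast\mathbb{T}$ is $\lambda^+$-closed and $\mathbb{C}_{fin}$ is $\lambda^+$-distributive, while $\mathbb{C}_n$ splits as a $\kappa_l$-c.c.\ ``low'' part $\prod_{n\le m<l}\Coll(\kappa_m^{++},<\kappa_{m+1})$ and a $\kappa_l^{++}$-closed ``high'' part, so a L\'evy--Solovay argument preserves exactly the completeness and closure needed for the short sequences at issue.
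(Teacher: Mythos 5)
Your proposal is correct and takes essentially the same route as the paper's proof: componentwise unions for the collapse coordinates and the $g$-tail (justified by the closure of the collapses and the genericity of $G_n$), intersections for the $A_j$'s, and, for the $F_j$'s, the combination of $N_j$ being closed under $\eta$-sequences with the genericity of $K_j$ over $N_j$. The only cosmetic difference is that the paper directly takes the pointwise union $F_j(\alpha)=\bigcup_{\delta<\eta}F_j^{p_\delta}(\alpha)$, which is itself a representative of your lower bound $c^*$ in the ultrapower, so no thinning of $A_j$ is needed; your extra attention to the fact that the sequence lives in $\bar{W}_n$ while $U_j$, $N_j$, $K_j$ live in $V$ addresses a point that the paper's own proof passes over silently.
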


\begin{proof} We define the lower bound $q$ as follows. The conditions of the sequence have the same $\alpha$-sequence 
$\langle \alpha_i;\ i< l\rangle.$ We let $q$ be 
$$\langle \alpha_0, g_0, f_0, \ldots \alpha_{l-1}, g_{l-1}, f_{l-1}, A_l, g_l, F_l, \ldots \rangle,$$

    where for $i<l,$ $f_i$ is $\bigcup_{\delta<\eta}f_i^{p_\delta}$ (this is well defined by the closure of $\Coll(\alpha_i^{+\omega+2}, <\kappa_i)$); similarly, for every $j,$ $g_j$ is $\bigcup_{\delta<\eta}g_j^{p_\delta}$ and $A_j$ is $\bigcap_{\delta<\eta} A_j^{p_\delta};$ finally for $\alpha\in A_j,$ we let $F_j(\alpha)=\bigcup_{\delta<\eta}F_j^{p_\delta}(\alpha)$ -- the equivalence class of $F_j$ is in $K_j$ because $N_j$ is closed under $\eta$-sequences and $K_j$ is generic for a forcing notion considered to be $\eta$-closed by $N_j.$

Now, we check that $q$ is a condition in $\mathbb{P}_n^*.$ Each condition $p_\delta$ is in $\mathbb{P}_n^*,$ hence we have that for each $\delta<\eta,$ the sequence 
$\langle g_j^{p_\delta};\ j\geq n\rangle$ is in $G_n.$ Each $g_j^{q}$ is defined as $\bigcup_{\delta<\eta} g_j^{p_\delta},$ thus the sequence 
$\langle g_j^q;\ j\geq n\rangle$ belongs to the generic $G_n,$ as required. \end{proof}

\begin{lemma}\label{lemma: Prikry property for Pn} 
$\mathbb{P}_n^*$ has the \emph{Prikry property}, namely for every open subset $D$ of $\mathbb{P}_n^*,$ for every condition $p\in \mathbb{P}_n^*$ and every $k\leq lg(p),$ there exists a condition $q\leq_k p$ such that 
\begin{enumerate}
\item for every condition $q^*\leq q$ in $D,$ we have $Int(k, q, q^*)\in D$ 
\item given $q^*\leq q$ in $D,$ for every condition $q^{**}\leq q$ with the same length of $q^*$ such that $q^{**}\restr k= q^*\restr k,$ we have $q^{**}\in D$ 
\item for $q^*$ and $q^{**}$ as above, if $D$ is the set of conditions deciding a given statement $\varphi,$ then we can assume that $q^*\force \varphi$ if and only if $q^{**}\force \varphi.$
\end{enumerate}
\end{lemma}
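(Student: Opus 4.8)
The plan is to reproduce the Magidor--Shelah proof of Lemma~\ref{lemma: Prikry property for P} for the subposet $\mathbb{P}_n^*$, verifying at each step that the extra requirement defining $\mathbb{P}_n^*$---that the $g$-part from index $n$ onward lies in $G_n$---is respected. The combinatorial skeleton is unchanged: $\mathbb{P}_n^*$ uses the same normal measures $U_j$, the same interpolation operation $Int(k,q,q^*)$, and the same direct-extension orderings $\leq_k$ as $\mathbb{P}$. The two analytic ingredients of any Prikry-type argument are (i) closure of the relevant direct-extension orderings under descending sequences, which is now supplied by Lemma~\ref{lemma: closure of the k-ordering for Pn}, and (ii) the normality and completeness of the $U_j$, which are unaffected since the ultrafilters live in $V$ and $\mathbb{P}_n^*$ operates on the very same measure-one sets.

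Fix an open $D \subseteq \mathbb{P}_n^*$, a condition $p \in \mathbb{P}_n^*$, and $k \leq lg(p)$; write $l = lg(p)$. First I would reduce clauses (1)--(3) to the single-statement version: applying the $\varphi$-case to the assertion ``some extension lies in $D$'', together with the openness of $D$, yields clause (1) through the interpolation $Int(k,q,q^*)$, while clauses (2) and (3) are exactly the uniformity secured below. To construct $q \leq_k p$, I would carry out a downward recursion through the finitely many Prikry coordinates $k \leq j < l$ together with a simultaneous thinning of the tail measure-one sets $A_j$ for $j \geq l$: at each coordinate one uses the normality of $U_j$ to pass to a measure-one subset on which the decision of $\varphi$ by extensions adding a given $\alpha_j$ depends only on the part below $k$ and on the length, and one strengthens the collapse data $f_j$, $g_j$, $F_j$ uniformly. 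Each such stage amounts to taking a lower bound of a $\leq_k$-descending sequence of length at most $\alpha_k^p$ (or less than $\kappa_l$ at the top coordinate $k=l$), and Lemma~\ref{lemma: closure of the k-ordering for Pn} guarantees that these lower bounds lie again in $\mathbb{P}_n^*$. Assembling the coordinate-wise thinnings by a diagonal intersection of the $A_j$ and unions of the collapse conditions produces the desired $q \leq_k p$, and clauses (2) and (3) record precisely the uniformity obtained in the normality steps.

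The only genuinely new point beyond the proof of Lemma~\ref{lemma: Prikry property for P} is ensuring that none of these operations leaves $\mathbb{P}_n^*$, i.e.\ that the $g$-part from index $n$ on stays in $G_n$. This is where Lemma~\ref{lemma: closure of the k-ordering for Pn} is essential: it was formulated precisely so that the unions $\bigcup_{\delta} g_j^{p_\delta}$ formed in the fusion remain in $G_n$, since $G_n$ is a filter and each sequence $\langle g_j^{p_\delta} : j \geq n\rangle$ already lies in it. Outside the fusion limits membership in $G_n$ is automatic, because the direct-extension ordering never alters $g_i$ for $i$ strictly above the coordinate being processed; inside the limits it is delivered by the lemma. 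I expect the main obstacle to be exactly this bookkeeping---keeping the whole recursion inside the $G_n$-slice while still being free to invoke normality of the $U_j$ on the tail---but it is routine once Lemma~\ref{lemma: closure of the k-ordering for Pn} is in hand, and every remaining step transcribes verbatim from the proof of Lemma~\ref{lemma: Prikry property for P}.
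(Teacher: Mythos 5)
Your proposal is correct and is essentially the paper's own proof: the paper likewise does not redo the construction but cites Magidor--Shelah's proof of the Prikry property for $\mathbb{P}$ and observes that it transfers verbatim to $\mathbb{P}_n^*$ because the only two ingredients are exactly the ones you isolate, namely existence of lower bounds for short $\leq_k$-decreasing sequences of conditions of the same length (Lemma~\ref{lemma: closure of the k-ordering for Pn}, which is also what absorbs the $G_n$-membership bookkeeping) and closure of the $A$- and $F$-parts under diagonal intersections via normality of the $U_j$. No gap.
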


For the proof of this lemma we refer to \cite[Lemma 3, p. 791]{MagidorShelah} where Magidor and Shelah prove the Prikry property for $\mathbb{P}.$ The same arguments apply to prove the Prikry property for $\mathbb{P}_n^*,$ because there are only two essential ingredients in that proof. First, every $\leq_k$-decreasing sequence of less than $\kappa$ many conditions of the same length must have a lower bound, which is what we just proved in Lemma~\ref{lemma: closure of the k-ordering for Pn} for our forcing $\mathbb{P}_n^*.$ Second, the $A$-parts and the $F$-parts must be closed by diagonal intersections, which follows from the normality of the ultrafilters $U_i,$ so it is true also of our forcing $\mathbb{P}_n^*.$

Any condition $q$ satisfying Lemma~\ref{lemma: Prikry property for Pn} is said to be in $D$ \emph{modulo $k$-direct extensions}. 

\begin{remark} 
Note that if $D$ is the dense set of conditions deciding the truth value of some statement $\varphi,$ then for $q, q^*$ as in the lemma we have that, if $q^*\leq q$ in $D,$ then for every condition $r\leq q,$ if  $r\restr k= q^*\restr k,$ then $r$ decides $\varphi$ the same way $q^*$ does. This means that for deciding $\varphi$ below $q$ we only need to change $q\restr k.$
\end{remark}

Now, we define in $\bar{W}_n$ a ``fake version'' of the stationary set $\dot{S}$ that we call $S^*,$  

$$S^*:=\{\alpha\in \lambda^+;\ \exists q\in\mathbb{P}_n^*\ (\stem(q)=\stem(p) \land q\force \alpha\in \dot{S}')\}.$$

We show that $S^*$ is stationary in $\bar{W}_n.$ Indeed, if $C\subseteq \lambda^+$ is a club in $\bar{W}_n,$ then by the $\lambda^+$-chain condition of $\mathbb{C}_n/G_{\mathbb{C}_{fin}}$ there is a club $D\subseteq C$ which is in $\bar{W}.$ In particular $D$ is in $\bar{W}^{\mathbb{P}^*}$ and, by Remark~\ref{rmk: one}, the realization of $\dot{S^{\prime}}$ in this model is a stationary set. Thus $D$ has non empty intersection with $\dot{S^{\prime}},$ which is clearly a subset of $S^*.$  

Note also that all the ordinals of $\mathbb{S}^*$ have cofinality less than $\kappa_n.$  

The next step is to define a ``fake'' version of the algebra in a generic extension of $\bar{W}_n$ via the product 
$$\mathbb{Z}:= \prod_{i<n} \Coll(\alpha_i^{+\omega+2}, <\kappa_i)\times \prod_{i<n} \Coll(\kappa_{i-1}^{++}, <\alpha_i)\times \Coll(\kappa_{n-1}^{++}, <\kappa_n),$$
where $\langle \alpha_0, \ldots , \alpha_{n-1}\rangle$ is the $\alpha$-sequence of $p.$

Let $G_{\mathbb{Z}}$ be a generic filter for $\mathbb{Z}$ containing the stem of $p.$ We define $\mathbb{P}^{**}$ as the set of all conditions $q\in \mathbb{P}_n^*$ of length $n$ such that $\stem(q)\in G_{\mathbb{Z}}.$

\begin{remark}\label{rmk: conditions of P** are pw compatible} Note that every two conditions of $\mathbb{P}^{**}$ are compatible in $\mathbb{P}_n^{*}$, indeed if $q, r\in \mathbb{P}^{**},$ then the stems are compatible since they belong to the generic filter $G_{\mathbb{Z}};$ the $g$-parts are compatible (in $\mathbb{C}_n$) since they belong to the generic filter $G_n;$ the $A$-parts are compatible since for every $j\geq n,$ we have $A_j^q\cap A_j^r\in U_j;$ finally the $F$-parts are compatible since the equivalence class of $F_j^q$ and $F_j^r$ modulo $U_j$ belong to the generic ultrafilter $K_j,$ thus 
$\{\alpha;\ F_j^q(\alpha)\textrm{ is compatible with }F_j^r(\alpha)\}\in U_j.$ 
\end{remark}

It follows that for any statement $\varphi$ in the forcing language for $\mathbb{P}_n^*$ (over $\bar{W}_n$) any two conditions in $\mathbb{P}^{**}$ which decide $\varphi$ assign to $\varphi$ the same truth value.



\begin{lemma}\label{lemma: Prikry property for P double star} Let $D\subseteq \mathbb{P}_n^*$ be a dense set (in $\bar{W}_n$), then there is $r\leq p$ in $\mathbb{P}^{**}$ such that $r$ is in $D$ modulo direct extensions, and there exists $m$ such that any direct extension of $r$ of length $m$ is in $D.$ 
\end{lemma}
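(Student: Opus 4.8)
The plan is to prove the lemma by a density argument carried out in $\bar{W}_n$ over the forcing $\mathbb{Z}$, and then to use the genericity of $G_{\mathbb{Z}}$ to land inside $\mathbb{P}^{**}$. Working in $\bar{W}_n$, let $E$ be the set of all $s\in\mathbb{Z}$ with $s\leq\stem(p)$ for which there is a length-$n$ condition $r_s\leq p$ in $\mathbb{P}_n^*$ with $\stem(r_s)=s$ such that $r_s$ is in $D$ modulo direct extensions (that is, $r_s$ satisfies the conclusion of Lemma~\ref{lemma: Prikry property for Pn} for $D$ with $k=0$) and there is an $m$ with every direct extension of $r_s$ of length $m$ lying in $D$. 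Since $\mathbb{P}^{**}$ consists exactly of the length-$n$ conditions whose stem is in $G_{\mathbb{Z}}$, it suffices to show that $E$ is dense below $\stem(p)$: then by genericity of $G_{\mathbb{Z}}$ I may pick $s\in E\cap G_{\mathbb{Z}}$ and set $r:=r_s$, which is the required element of $\mathbb{P}^{**}$.

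The first ingredient is a preservation observation: if $q$ is in $D$ modulo direct extensions and $r\leq q$ is \emph{any} strengthening, then $r$ is again in $D$ modulo direct extensions. Indeed, clauses (2) and (3) of Lemma~\ref{lemma: Prikry property for Pn} for $r$ follow at once from the corresponding clauses for $q$, since any $r^*\leq r$ is also $\leq q$; and clause (1) for $r$ follows from clause (2) for $q$ applied to $Int(0,r,r^*)$, which is $\leq q$, has the same length as $r^*$, and the same $\restr 0$. This will let me strengthen $q$ freely — in particular, to shrink its $g_0$-coordinate — without losing the Prikry-type uniformity.

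Now fix $s_0\leq\stem(p)$ in $\mathbb{Z}$ and let $p_{s_0}\leq p$ be the length-$n$ condition obtained from $p$ by replacing its stem with $s_0$. Applying Lemma~\ref{lemma: Prikry property for Pn} with $k=0$ to $p_{s_0}$ and $D$ produces $q\leq_0 p_{s_0}$ in $D$ modulo direct extensions, with $\stem(q)\leq s_0$ and $g_0^q=g_0^{s_0}$. Since $D$ is dense, choose $q^*\leq q$ in $D$ and put $m=lg(q^*)$. Let $q'$ be $q$ with its $g_0$-coordinate replaced by $g_0^{q^*}$; this is again a condition of $\mathbb{P}_n^*$ (only $g_0$ is strengthened, so its $g$-part still lies in $G_n$), and $q'\leq q$, whence by the preservation observation $q'$ is still in $D$ modulo direct extensions, while $\stem(q')=:s_2\leq s_0$ in $\mathbb{Z}$. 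As $g_0^{q'}=g_0^{q^*}$ we have $q^*\leq q'$ and $q^*\restr 0=q'\restr 0$; and every direct extension of $q'$ of length $m$ also has $\restr 0$ equal to $q'\restr 0$, since direct extensions preserve $g_0$. Hence clause (2) of Lemma~\ref{lemma: Prikry property for Pn} for $q'$, with the witness $q^*\in D$, shows that every direct extension of $q'$ of length $m$ lies in $D$. Thus $s_2\in E$, establishing density.

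Finally, by the genericity of $G_{\mathbb{Z}}$ over $\bar{W}_n$ there is $s\in E\cap G_{\mathbb{Z}}$, and the associated $r=r_s$ is a length-$n$ condition below $p$ whose stem lies in $G_{\mathbb{Z}}$, hence $r\in\mathbb{P}^{**}$; it is in $D$ modulo direct extensions and admits the required uniform length $m$. I expect the only genuine subtlety to be precisely the mismatch handled in the third paragraph: the Prikry uniformity of clause (2) controls only conditions sharing the same $\restr 0$, i.e. the same $g_0$, whereas membership in $\mathbb{P}^{**}$ pins $g_0$ down through $G_{\mathbb{Z}}$. Reconciling these is what forces me to perform the $g_0$-strengthening \emph{inside} the density argument over $\mathbb{Z}$, rather than trying to adjust after already passing to $\mathbb{P}^{**}$.
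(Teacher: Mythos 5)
Your proof is correct and is essentially the paper's argument: the paper likewise reduces the lemma to a density statement over $\mathbb{Z}$ (phrased there as a proof by contradiction from an $s\in\mathbb{Z}$ forcing failure), applies the Prikry property of $\mathbb{P}_n^*$ to the condition obtained by replacing $\stem(p)$, picks a condition of $D$ below the result, and transfers its lower part back to produce the witness whose stem lies below the given condition of $\mathbb{Z}$. The only deviations are cosmetic: the paper invokes Lemma~\ref{lemma: Prikry property for Pn} with $k=n$ and replaces the whole lower part $\restr n$ of the chosen condition (rather than only $g_0$, as you do with $k=0$), and it leaves implicit the downward-preservation observation that you spell out.
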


 \begin{proof} Fix a dense set $D\subseteq \mathbb{P}_n^*.$ Suppose by contradiction that there is $s\in \mathbb{Z}$ such that $s\leq \stem(p)$ and $s$ forces that the lemma fails. Let $p^*$ be the condition obtained from $p$ by replacing $\stem(p)$ with $s.$ Then, $s$ forces that $p^*\in \mathbb{P}^{**}$ and clearly $p^*\leq p.$ By Lemma~\ref{lemma: Prikry property for Pn}, we know that $\mathbb{P}_n^*$ has the Prikry property, so we can find a condition $q\leq_n p^*$ in $\mathbb{P}_n^*$ that is in $D$ modulo $n$-direct extensions. Let $r^*\leq q$ be in $D,$ and let $m\geq n$ be its length. $q$ has the property that every extension $q^*$ of $q$ of length $m$ such that $q^*\restr n= r^*\restr n$ is in $D.$
 We let $r$ be the condition obtained from $q$ by replacing $\stem(q)$ with $\stem(r^*)\restr n.$ If $t= \stem(r),$ then $t\leq s$ and $t$ forces that $r$ is in $\mathbb{P}^{**},$ moreover $r$ is in $D$ modulo direct extensions and every direct extension of $r$ of length $m$ is in $D,$ contradicting the fact that $s$ forces the lemma to fail. \end{proof}
 
Similar arguments as for the proof of Lemma~\ref{lemma: Prikry property for P double star} imply the following lemma. 

\begin{lemma}\label{lemma: deciding statements within P double star} For every statement $\varphi$ in the forcing language for $\mathbb{P}_n^*$ (over $\bar{W}_n$) there exists $r\leq p$ in $\mathbb{P}^{**}$ such that $r$ decides $\varphi$ (in the sense of the forcing $\mathbb{P}_n^*$)
\end{lemma}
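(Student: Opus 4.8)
The plan is to imitate the proof of Lemma~\ref{lemma: Prikry property for P double star}, since the statement to be proven differs only in that we replace ``membership in a dense set $D$'' with ``deciding a statement $\varphi$.'' The key observation is that the dense set $D_\varphi$ of all conditions in $\mathbb{P}_n^*$ that decide $\varphi$ is indeed dense and open, so the machinery of Lemma~\ref{lemma: Prikry property for Pn} applies directly. First I would suppose for contradiction that there is a condition $s\in\mathbb{Z}$ with $s\leq\stem(p)$ forcing that no $r\leq p$ in $\mathbb{P}^{**}$ decides $\varphi$, and I would form $p^*$ from $p$ by replacing $\stem(p)$ with $s$, so that $s\force p^*\in\mathbb{P}^{**}$ and $p^*\leq p$.

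Next I would apply the Prikry property (Lemma~\ref{lemma: Prikry property for Pn}) to the open dense set $D_\varphi$ and the condition $p^*$ with $k=n$, obtaining $q\leq_n p^*$ that is in $D_\varphi$ modulo $n$-direct extensions; here I would invoke clause (3) of Lemma~\ref{lemma: Prikry property for Pn}, which ensures that the truth value assigned to $\varphi$ depends only on $q\restr n$. Picking any $r^*\leq q$ in $D_\varphi$ of some length $m\geq n$, I would let $r$ be obtained from $q$ by replacing $\stem(q)$ with $\stem(r^*)\restr n$. By construction $t:=\stem(r)$ satisfies $t\leq s$ and forces $r\in\mathbb{P}^{**}$, while the Prikry property guarantees that $r$ already decides $\varphi$ (every extension of $r$ agreeing with $r$ below $n$ decides $\varphi$ the same way, and by homogeneity of $\mathbb{P}^{**}$, Remark~\ref{rmk: conditions of P** are pw compatible}, any two conditions of $\mathbb{P}^{**}$ deciding $\varphi$ agree on its value). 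This yields $t\leq s$ forcing the existence of the desired $r$, contradicting the choice of $s$.

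The statement as phrased asserts the existence of a single $r\leq p$ in $\mathbb{P}^{**}$ deciding $\varphi$ unconditionally, rather than below an arbitrary $s$, so I would conclude by the standard density argument: the set of $s\in\mathbb{Z}$ below $\stem(p)$ for which such an $r$ exists is dense below $\stem(p)$, and since $G_{\mathbb{Z}}$ meets it, we obtain in $\bar{W}_n$ a genuine $r\leq p$ with $\stem(r)\in G_{\mathbb{Z}}$, i.e.\ $r\in\mathbb{P}^{**}$, deciding $\varphi$.

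The main subtlety, which I expect to be the only real point requiring care, is the interplay between the forcing $\mathbb{Z}$ over $\bar W_n$ and the combinatorics internal to $\mathbb{P}_n^*$: one must be careful that $\varphi$, being a statement in the forcing language for $\mathbb{P}_n^*$ over $\bar W_n$, is unaffected by moving to the $\mathbb{Z}$-extension, so that the Prikry-property argument can be run entirely inside $\bar W_n$ and the resulting condition $r$ genuinely decides $\varphi$ as a $\mathbb{P}_n^*$-statement. Everything else is a direct transcription of the argument for Lemma~\ref{lemma: Prikry property for P double star}, with ``$\in D$'' systematically replaced by ``decides $\varphi$,'' which is precisely why the paper remarks that the same arguments suffice.
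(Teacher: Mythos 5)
Your proposal is correct and takes essentially the same approach as the paper: the paper's own ``proof'' of this lemma is exactly the remark that the argument of Lemma~\ref{lemma: Prikry property for P double star} goes through with $D$ replaced by the dense open set $D_\varphi$ of conditions deciding $\varphi$, using clause (3) of Lemma~\ref{lemma: Prikry property for Pn} (and the remark following it) to upgrade ``in $D_\varphi$ modulo direct extensions'' to an actual decision of $\varphi$ by the constructed condition $r$. Your transcription of that contradiction/density argument over $\mathbb{Z}$, including the care about $\varphi$ living in $\bar{W}_n$ rather than in the $\mathbb{Z}$-extension, is precisely the intended argument.
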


 The fake version of the algebra is defined as follows. We can assume that for each $m<\omega,$ the algebra $\dot{A}$ has $\mu$ many operations of arity $m$ 
 (so we already know in $\bar{W},$ for each $l<\omega,$ how many $l$-ary operations are in $\dot{A}$). We fix $\mathbb{P}^*$-names for the Skolem functions $\langle t_\rho\rangle_{\rho\in \mu}$ appropriate to the language of $\dot{A}.$ 
 
 In $\bar{W}_n[G_{\mathbb{Z}}],$ we let $A^*$ be set of sequences 
 $(\rho, \alpha_1,\ldots, \alpha_l)$ such that $\rho\in \mu,$ the Skolem function $t_\rho$ is of arity $l$ and $\alpha_1,\ldots,\alpha_l$ is a finite sequence of ordinals in $\lambda^+.$ We define an equivalence relation $\sim^*$ on $A^*$ by letting 
 $$\begin{matrix}
 (\rho, \alpha_1,\ldots, \alpha_l)\sim^* (\eta, \beta_1, \ldots ,\beta_m)\\ \ \iff \\\exists q\in \mathbb{P}^{**}\ (q\force_{\mathbb{P}_n^* }\dot{t}_\rho(\alpha_1, \ldots, \alpha_l) = \dot{t}_\eta(\beta_1, \ldots, \beta_m))
 \end{matrix}$$
   
Note that, from what we proved of $\mathbb{P}^{**},$ either there exists $q\in\mathbb{P}^{**}$ forcing that $\dot{t}_\rho(\alpha_1, \ldots, \alpha_l) = \dot{t}_\eta(\beta_1, \ldots, \beta_m),$ or there exists $q\in \mathbb{P}^{**}$ forcing that $\dot{t}_\rho(\alpha_1, \ldots, \alpha_l) \neq \dot{t}_\eta(\beta_1, \ldots, \beta_m),$ and the two possibilities are exclusive. We also define an ordering $<^*$ on $A^*$ by letting 

$$\begin{matrix}
(\rho, \alpha_1,\ldots, \alpha_l)<^* (\eta, \beta_1, \ldots ,\beta_m) \\ \iff \\ \exists q\in \mathbb{P}^{**}\ (q\force_{\mathbb{P}_n^*} \dot{t}_\rho(\alpha_1, \ldots, \alpha_l) < \dot{t}_\eta(\beta_1, \ldots, \beta_m))\end{matrix}$$

It is not difficult to see that $<^*$ is a well order on $A^*.$ Moreover, the order type of $A^*$ under the order $<^*$ is $\lambda^+;$ for a proof of this fact we refer to \cite[Lemma 10 and Lemma 11, p. 801]{MagidorShelah}





Now we step back to $\bar{W}_n.$ 
Recall that $U_n^*$ is a normal ultrafilter on $\mathcal{P}_{\kappa_n}(\lambda^+)$ and $U_n$ corresponds to its projection to a normal ultrafilter on $\kappa_n.$ 
We let $j$ be the $\lambda^+$-supercompact embedding corresponding to $U_n^*.$ Let $\dot{A}^*$ be a $\mathbb{Z}$-name for the fake algebra $A^*.$ Fix $\theta$ large enough such that $\lambda^+,$ $S^*,$ $\mathbb{P}^*,$ $p,$ $\mathbb{Z},$ $\dot{A}^*$ are in $H_\theta.$ Let $\mathcal{H}$ be the structure $\langle H_\theta, \lambda^+, S^*, \mathbb{P}^*, p, \mathbb{Z}, \dot{A}^* \rangle $

By arguing as in the proof of Proposition \ref{prop: reflection from spc} we can find a set $X\in \mathbb{P}_{\kappa_n}(\lambda^+)$ such that the following holds: 
\begin{itemize}
\item[(1)]\label{item: primo} $S^*\cap X$ is stationary in $\sup(X),$
\item[(2)]\label{item: secondo} $o.t.(X)= (X\cap \kappa_n)^{+\omega+1}$ (hence $o.t.(X)<\kappa_n$)
\item[(3)]\label{item: terzo} If $N$ is $Sk_{\mathcal{H}}(X)$ (i.e. the Skolem closure of $X$ to get an elementary substructure of $\mathcal{H}$ of the same size as $\vert X\vert$), then $N\cap (\lambda^+)^{<\omega}= (X)^{<\omega}$ 
\item[(4)]\label{item: quarto} $X\cap \kappa_n\in E$ for every $E\in U_n\cap N.$
\end{itemize}

Indeed, if we consider $X^*:= j[\lambda^+],$ then 
$o.t.(X^*)= \lambda^+=
\kappa_n^{+\omega+1}= (X^*\cap \kappa_n)^{+\omega+1}$ 
and we can show as in Proposition \ref{prop: reflection from spc} that 
$j(S^*)\cap X^*$ is stationary in $\sup(X^*).$ Moreover, if $N^*= Sk_{j(H)}(X^*),$ then for every $E^*\in j(U_n)\cap N^*= j[U_n]\cap N^*$ there exists 
$E\in U_n$ such that $j(E)=E^*$ and $j[\lambda^+]\cap j(\kappa_n)=\kappa_n\in j(E).$ Therefore, the existence of such a set $X$ follows by elementarity of the $j.$  

Note that we can also assume that $\mu\subseteq X.$
We defined $X$ in $\bar{W}_n,$ however $\mathbb{C}_n/G_{\mathbb{C}_{fin}}$ is $\kappa_n$-distributive, thus $X$ is in $\bar{W}$.

Item~\ref{item: terzo} above implies in particular that $X\cap \kappa_n$ belongs to every $A_n^r$ with $r\in \mathbb{P}_n^*\cap N,$ hence $X\cap \kappa_n$ is an inaccessible cardinal below $\kappa_n$ (by the definition of $\mathbb{P},$ every set $A_n^r$ contains only inaccessible cardinals).

\begin{lemma} In $\bar{W},$ there exists a condition $q\leq p$ in $\mathbb{P}_n^*$ of length $n+1$ such that the 
$n$-th term of the $\alpha$-sequence of $q$ is precisely $X\cap \kappa_n$ and such that $q$ extends every trivial extension of $p$ (every extension with the same stem) which is in $\mathbb{P}_n^*\cap N.$
\end{lemma}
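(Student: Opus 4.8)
The plan is to build $q$ by amalgamating all the trivial extensions of $p$ that lie in $N$ and inserting the new point $\delta := X\cap\kappa_n$ at level $n$. Write $\mathcal{T}$ for the set of all $r\in\mathbb{P}_n^*\cap N$ with $\stem(r)=\stem(p)$ and $r\leq p$; since $|N|=o.t.(X)=(X\cap\kappa_n)^{+\omega+1}<\kappa_n$ we have $|\mathcal{T}|\leq\delta^{+\omega+1}<\kappa_n$, and $p\in\mathcal{T}$ because $p$ is a parameter of $\mathcal{H}$. Recall that $\delta$ is inaccessible and, as observed just before the lemma, $\delta\in A_n^r$ for every $r\in\mathcal{T}$, so it is legitimate to speak of $F_n^r(\delta)$.

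I would then define $q$ as the sequence of length $n+1$ whose part below level $n$ copies $\stem(p)$, whose $n$-th point is $\alpha_n^q=\delta$, and whose tail amalgamates the tails of the members of $\mathcal{T}$. Concretely: for $i<n$ set $f_i^q=f_i^p$ and $g_i^q=g_i^p$; at level $n$ let $g_n^q$ be a common lower bound in $G_n$ of the $g_n^r$ and set $f_n^q=\bigcup_{r\in\mathcal{T}}F_n^r(\delta)$; for the $g$-coordinates $i>n$ take a common lower bound in $G_n$; and for $j>n$ put $A_j^q=\bigcap_{r\in\mathcal{T}}A_j^r\in U_j$ (by $\kappa_j$-completeness, as $|\mathcal{T}|<\kappa_n<\kappa_j$) and $F_j^q(\alpha)=\bigcup_{r\in\mathcal{T}}F_j^r(\alpha)$.

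The heart of the matter, and the main obstacle, is to see that $f_n^q$ is a genuine condition, i.e.\ that $\{F_n^r(\delta):r\in\mathcal{T}\}$ is pairwise compatible with union of small enough domain; plain closure is useless here, since $\Coll(\delta^{+\omega+2},<\kappa_n)$ is only $\delta^{+\omega+2}$-closed while $\mathcal{T}$ need not be linearly ordered. This is exactly where the realization property of $\delta$ and the generic filter $K_n$ enter. For $r,r'\in\mathcal{T}$ the classes $[F_n^r]_{U_n},[F_n^{r'}]_{U_n}$ both lie in $K_n$ (clause~(7) of the definition of $\mathbb{P}$), hence are compatible; by \L o\'s's theorem the set $E_{r,r'}=\{\beta:F_n^r(\beta)\text{ and }F_n^{r'}(\beta)\text{ are compatible}\}$ belongs to $U_n$, and since $F_n^r,F_n^{r'},U_n\in N$ we have $E_{r,r'}\in U_n\cap N$, whence $\delta\in E_{r,r'}$ by property~(4) of $X$. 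Thus the $F_n^r(\delta)$ are pairwise compatible; as conditions of a Levy collapse are partial functions under reverse inclusion, their union is single-valued, and since there are at most $\delta^{+\omega+1}$ of them, each of domain size $<\delta^{+\omega+2}$, regularity of $\delta^{+\omega+2}$ makes the union a condition below every $F_n^r(\delta)$. The identical computation handles $F_j^q$ for $j>n$, after shrinking $A_j^q$ to the ($U_j$-large) set of $\alpha$ on which all $F_j^r(\alpha)$ are pairwise compatible.

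Finally I would dispatch the routine verifications. That $q\in\mathbb{P}_n^*$ amounts to $\langle g_i^q:i\geq n\rangle\in G_n$ (immediate, as these are fewer than $\kappa_n$ elements of the generic $G_n$ for the $\kappa_n^{++}$-closed $\mathbb{C}_n$) and $[F_j^q]_{U_j}\in K_j$; the latter holds because $[F_j^q]_{U_j}=\bigcup_{r}[F_j^r]_{U_j}$ is $\leq$ every $[F_j^r]_{U_j}\in K_j$, and a condition compatible with every element of the generic $K_j$ must itself belong to $K_j$. That $q\leq r$ for each $r\in\mathcal{T}$ is read off the order ($\delta\in A_n^r$ and $f_n^q\leq F_n^r(\delta)$ at the new coordinate, and the amalgamations lie below the old data elsewhere); in particular $q\leq p$, as $p\in\mathcal{T}$. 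The length of $q$ is $n+1$ with $n$-th point $\delta$, as required, and $q\in\bar{W}$ by the $\kappa_n$-distributivity of $\mathbb{C}_n/G_{\mathbb{C}_{fin}}$.
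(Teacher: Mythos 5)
Your proof is correct and takes essentially the same route as the paper's: amalgamate all trivial extensions of $p$ lying in $\mathbb{P}_n^*\cap N$ by taking (pointwise) unions of the $f$- and $F$-parts, intersections of the $A$-parts, and lower bounds in $G_n$ for the $g$-parts, with $X\cap\kappa_n$ inserted as the new point of the $\alpha$-sequence. In fact you spell out the one step the paper leaves implicit when it asserts the amalgamated conditions are ``pairwise compatible'': the conditions $F_n^r(X\cap\kappa_n)$ cohere because each compatibility set $E_{r,r'}$ is in $U_n$ by \L o\'s and lies in $N$, so it contains $X\cap\kappa_n$ by property (4) of $X$.
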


\begin{proof} We let 
$q= \stem(p)\smallfrown \langle \alpha_n, g_n, f_n, A_{n+1}, g_{n+1}, F_{n+1},\ldots \rangle$
where $\alpha_n= X\cap \kappa_n,$ and the other components are defined as follows: 
\begin{enumerate}
\item[(1)] each $g_i$ is the union of all the $g_i^{r}$ for $r\in \mathbb{P}_n^*\cap N$
\item[(2)] $A_i$ is the intersection of all the $A_i^{r}$ for $r\in \mathbb{P}_n^*\cap N$
\item[(3)] $f_n$ is the union of all the $F_n^{r}(\alpha)$ for $r\in \mathbb{P}_n^*\cap N$
\item[(4)] $F_i$ is the function on $A_i$ defined by $F_i(x)= \bigcup\{ F_i^{r}(x);\ r\in \mathbb{P}_n^*\cap N \land [F_i]_{U_i}\in K_i\}$   

\end{enumerate}

$q$ is well defined because for each component, we are taking the union of $\vert N\vert$ many pairwise compatible conditions in forcings which are at least $\alpha_n^{+\omega+2}$-closed and we have $\vert N\vert = \vert X\vert = \vert X\cap \kappa_n \vert^{+\omega+1}= \alpha_n^{+\omega+1}.$
(also note that since $N\in \bar{W}$ and $\mathbb{C}_n$ is $\kappa_n$-distributive, the condition $q$ is defined in $\bar{W}$). \end{proof}

Now we go back to $\bar{W}.$ Note that $q$ forces that the order type of $X$ is a regular cardinal, because $\alpha_n= X\cap \kappa_n,$ the order type of $X$ is 
$(X\cap \kappa_n)^{+\omega+1}$ and no cardinals between $\alpha_n$ and $\alpha_n^{+\omega+2}$ are collapsed. 
Also note that for every $\alpha\in S^*\cap X,$ there is a trivial extension of $p$ in $\mathbb{P}_n^*$ that forces $\alpha\in \dot{S^{\prime}};$ by the elementarity of $N$ we can find such an extension in $N,$ thus $q$ extends it, hence $q\force \alpha\in S^{\prime}.$ Since $X\cap S^*$ is stationary in $\sup(X),$ we have 
$q\force X\cap S^{\prime} \textrm{ is stationary in }\sup(X)$   
So we complete the proof of the theorem if we can show that $q$ forces that the subalgebra of $\dot{A}$ generated by $X$ has the same order type as $X$ and $X$ is cofinal in it.

\begin{lemma} $q$ forces that if $\dot{B}$ is the subalgebra of $\dot{A}$ generated by $X,$ then $\dot{B}$ has order type $\vert X\vert $ and $X$ is cofinal in it. 
 \end{lemma}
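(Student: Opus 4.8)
The plan is to show that $q$ forces $X$ to be closed under the operations of $\dot A$, so that the generated subalgebra $\dot B$ is forced to equal $X$ as a set of ordinals; since we already know $o.t.(X) = (X\cap\kappa_n)^{+\omega+1} = |X|$ is a regular cardinal, both conclusions follow at once, $X$ being trivially cofinal in itself. As $\dot B$ is generated from $X$ by the Skolem functions $\langle \dot t_\rho \rangle_{\rho<\mu}$ and $\mu\subseteq X$, it suffices to prove that $q\force \dot t_\rho(\alpha_1,\ldots,\alpha_l)\in X$ for every $\rho<\mu$ and every tuple $\alpha_1,\ldots,\alpha_l$ from $X$. The inclusion $X\subseteq \dot B$ and the bound $q\force |\dot B|\leq |X|$ (there being only $|X|$ such terms) are immediate.

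The first step is to read each generator value off the fake algebra. Fix $\rho<\mu$ and $\vec\alpha = (\alpha_1,\ldots,\alpha_l)$ from $X$. By Lemma~\ref{lemma: deciding statements within P double star} there is a condition in $\mathbb{P}^{**}$ deciding the value of $\dot t_\rho(\vec\alpha)$, and by the uniform-decision property of $\mathbb{P}^{**}$ (the consequence of Remark~\ref{rmk: conditions of P** are pw compatible}) all such conditions agree on a single ordinal $\gamma_{\rho,\vec\alpha}<\lambda^+$, which is exactly the value assigned to the term by the fake algebra $A^*$. I then want $q\force \dot t_\rho(\vec\alpha)=\gamma_{\rho,\vec\alpha}$. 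Applying the Prikry property of $\mathbb{P}_n^*$ (Lemma~\ref{lemma: Prikry property for Pn}) to the dense set of conditions deciding this value yields a trivial extension of $p$ (same stem) that is in that set modulo direct extensions, so that the decision depends only on the lower, fixed part of the condition; since $p$, the name $\dot t_\rho$ and the parameters $\rho,\vec\alpha$ all lie in $N$, elementarity of $N\prec\mathcal H$ provides such a witness in $\mathbb{P}_n^*\cap N$. By construction $q$ lies below every trivial extension of $p$ in $\mathbb{P}_n^*\cap N$ and fixes $\alpha_n = X\cap\kappa_n$, so absorbing this witness forces the value: $q\force \dot t_\rho(\vec\alpha)=\gamma_{\rho,\vec\alpha}$. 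Consequently $q$ forces $\dot B$ to be the subalgebra $B^* := \{\gamma_{\rho,\vec\alpha} : \rho<\mu,\ \vec\alpha\in X\}$ generated by $X$ inside the fake algebra.

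It remains to check that $B^* = X$, that is, that $X$ is closed under the fake operations. This is the analogue, one level down, of the fact that $j[\lambda^+]$ is closed under the operations $j(\dot t_\rho)$, which is what made the reflection work in Proposition~\ref{prop: reflection from spc} and in the choice of $X$: for $\vec\alpha\in X$ one has $j(\dot t_\rho)(j(\alpha_1),\ldots,j(\alpha_l))=j(\dot t_\rho(\vec\alpha))\in j[\lambda^+]$. Concretely, $\gamma_{\rho,\vec\alpha}$ is the realization by $G_{\mathbb{Z}}$ of a $\mathbb{Z}$-name that is definable over $\mathcal H$ from $\dot A^*$ and the parameters $\rho,\vec\alpha\in N$, hence this name belongs to $N$; using the property $N\cap(\lambda^+)^{<\omega}=X^{<\omega}$ (so $N\cap\lambda^+=X$) together with the genericity of $G_{\mathbb{Z}}$ over $N$, the realization $\gamma_{\rho,\vec\alpha}$ lands in $N\cap\lambda^+=X$. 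Therefore $B^*\subseteq X$, and combined with the trivial inclusion $X\subseteq B^*$ we obtain $B^*=X$. Putting this together with the previous step, $q\force \dot B = X$, whence $o.t.(\dot B)=o.t.(X)=|X|$ and $X$ is cofinal in $\dot B$, as required.

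The main obstacle is the interface between the last two steps. One must verify carefully that the value of $\dot t_\rho(\vec\alpha)$ computed from the real generic below $q$ genuinely coincides with the fake value $\gamma_{\rho,\vec\alpha}$ --- this rests on the Prikry property forcing the decision to depend only on the fixed lower part of $q$ and on $q$ absorbing the correct condition of $N$ --- and that this fake value is insensitive, modulo membership in $X$, to the choice of $G_{\mathbb{Z}}$. The delicacy is that $\mathbb{P}^{**}$, and hence $A^*$, are defined from $G_{\mathbb{Z}}$, so the closure of $X$ under the fake operations cannot be read off the realized values directly but must be extracted from the elementarity of $N$ applied to the relevant $\mathbb{Z}$-names; controlling how $G_{\mathbb{Z}}$ meets the dense sets of $N$ is the crux of the argument.
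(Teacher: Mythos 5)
Your plan hinges on proving something strictly stronger than the lemma, namely that $q\Vdash \dot B = X$, and this stronger statement is false; the lemma is stated the way it is precisely because the generated subalgebra is in general a proper superset of $X$. The operations of $\dot A$ are interpreted by the $\mathbb{P}^*$-generic filter, and nothing prevents a name $\dot t_\rho$ from coding generic information: for instance, $p$ may force $\dot t_\rho(0)$ to be the $(n+1)$-st point of the Prikry sequence. Below $q$ (which has length $n+1$) this value ranges over the measure-one set $A_{n+1}^q\in U_{n+1}$ as the generic varies, so it is not decided by $q$ and is certainly not forced into the fixed set $X$ of size $<\kappa_n$. For the same reason your ordinal $\gamma_{\rho,\vec\alpha}$ need not exist: Lemma~\ref{lemma: deciding statements within P double star} and Remark~\ref{rmk: conditions of P** are pw compatible} allow conditions of $\mathbb{P}^{**}$ to decide any \emph{single} statement (e.g.\ a comparison $\dot t_\rho(\vec\alpha)=\dot t_\eta(\vec\beta)$) and guarantee the decisions cohere, but deciding the ordinal \emph{value} of a term means choosing among $\lambda^+$ many alternatives, and no condition of length $n$, nor $q$ itself, need do that --- this is exactly why the fake algebra $A^*$ is built as a term structure ordered by $<^*$ rather than as a set of ordinals. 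Your final genericity step is also invalid: if $\dot\gamma$ is a $\mathbb{Z}$-name in $N$ for an ordinal, genericity of $G_{\mathbb{Z}}$ over $N$ only says that $G_{\mathbb{Z}}$ meets the dense sets lying in $N$; the condition doing the meeting, and the value it decides, need not belong to $N$. (Closure of $N\cap\lambda^+$ under an algebra's operations via elementarity works when the algebra itself is a member of $N$; here only \emph{names} lie in $N$, and realizations of those names escape $N$ --- that is the central difficulty of the whole proof.) What the $\kappa_n$-c.c.\ of $\mathbb{Z}$ plus elementarity actually yield is a set of fewer than $\kappa_n$ possible values lying in $\bar W_n$, hence a \emph{bound} in $N\cap\lambda^+=X$; that gives cofinality of $X$ in the generated algebra, not closure of $X$ under the operations.

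This is why the paper's proof takes a different route and never claims $\dot B=X$. Working in $\bar W_n[G_{\mathbb{Z}}]$, it first proves that the term subalgebra $B^*$ of $A^*$ generated by $\mu\times X$ has order type $\vert X\vert$ with $X$ cofinal in it, by applying the boundedness argument just described to an order-isomorphism $\dot h$ of $A^*$ with $\lambda^+$. It then fixes a $\mathbb{P}^*$-generic filter containing $q$ whose induced $\mathbb{Z}_X$-generic is $G_X$, and shows that the map $t_\rho(\vec\eta)\mapsto(\rho,\vec\eta)$ is an order isomorphism of the realization $B$ of $\dot B$ onto $B^*$, using Lemma~\ref{lemma: deciding statements within P double star}, elementarity of $N$, and the fact that $q$ extends every trivial extension of $p$ lying in $N$. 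The conclusion about order type and cofinality is then transferred from $B^*$ to $B$ through this isomorphism. To repair your argument you would have to abandon both the value-deciding claim and the closure claim, and prove exactly this weaker transfer statement, which is the content of the paper's proof.
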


\begin{proof} First we show that in $\bar{W}_n[G_{\mathbb{Z}}]$ the subalgebra $B^*$ of $A^*$ generated by $\mu\times X$  has order type $\vert X\vert $ and $X$ is cofinal in it. Fix a $\mathbb{Z}$-name $\dot{h}$ for an order-preserving map from $A^*$ onto $\lambda^+.$ To show that $X$ is cofinal in $B^*$ observe that, since $\mathbb{Z}$ is $\kappa_n$-c.c., we have that, for every term 
$(\rho, \vec{\eta})\in A^*$ 
there is a set $X_{\rho, \vec{\eta}}$ of size less than $\kappa_n$ in $\bar{W}_n$ such that $\force_{\mathbb{Z}}\ \dot{h}((\rho, \vec{\eta}))\in X_{\rho, \vec{\eta}}.$
Therefore, $\dot{h}(\rho, \vec{\eta})$ is bounded below $\lambda^+.$ 
If $\vec{\eta}$ is a finite sequence of elements of $X,$ then by elementarity of $N$ wa can assume that the bound $\beta$ is in $N,$ thus $\beta\in N\cap \lambda^+=X.$
A similar argument shows that the order type of $B^*$ is $\vert X\vert.$
Also, using again the fact that $\mathbb{Z}$ is $\kappa_n$-c.c., we can prove that $B^*$ exists in $\bar{W}_n[G_X]$ where $G_X$ the generic determined by $G_{\mathbb{Z}}$ for the product 
$$\mathbb{Z}_X:= \prod_{i<n} \Coll(\alpha_i^{+\omega+2}, <\kappa_i)\times \prod_{i<n} \Coll(\kappa_{i-1}^{++}, <\alpha_i)\times \Coll(\kappa_{n-1}^{++}, <X\cap \kappa_n).$$ 
By the distributivity of $\mathbb{C}_n/G_{\mathbb{C}_{fin}}$ we have that 
$B^*$ is in $\bar{W}[G_X].$ Let $G_{\mathbb{P}^*}$ be a $\mathbb{P}^*$-generic filter over $\bar{W}$ such that $q\in G_{\mathbb{P}^*}$ and such that the $\mathbb{Z}_X$-generic object determined by $G_{\mathbb{P}^*}$ is precisely $G_X.$ Let $B$ be the interpretation of $\dot{B}$ by $G_{\mathbb{P}^*},$ and for every $\rho<\mu,$ let $t_\rho$ be the interpretation of $\dot{t}_\rho$ by $G_{\mathbb{P}^*}.$ 
The proof of the lemma is complete if we show that in $\bar{W}[G_{\mathbb{P}^*}],$ the following map $\pi$ is an isomorphism between $B$ and $B^*.$ $\pi$ is defined as follows: given $\vec{\eta}\in X$ and 
$t_\rho,$ we map $t_\rho(\vec{\eta})\in B$ to $(\rho, \vec{\eta}).$ We need to show that for two terms $t_\rho(\vec{\eta})$ and $t_\chi(\vec{\zeta})$ in $B,$ we have $t_\rho( \vec{\eta}) = t_\chi(\vec{\zeta})$ (resp. $t_\rho(\vec{\eta})<t_\chi(\vec{\zeta})$) if and only if $(\rho, \vec{\eta}) \sim^* (\chi, \vec{\zeta})$ (resp. $(\rho, \vec{\eta})<^*(\chi, \vec{\zeta})$). 

So, suppose that $t_\rho( \vec{\eta}) = t_\chi(\vec{\zeta})$ (resp. $t_\rho(\vec{\eta})<t_\chi(\vec{\zeta})$), then by Lemma~\ref{lemma: deciding statements within P double star}, there is a condition in $\mathbb{P}^{**}$ that forces this statement. This means that there exists a trivial extension $r$ of $p$ and a stem $t\in G_X$ such that if $r^*$ is the condition obtained from $r$ by replacing the stem of $r$ with $t,$ then $r^*$ forces $t_\rho( \vec{\eta}) = t_\chi(\vec{\zeta})$ (resp. $t_\rho(\vec{\eta})<t_\chi(\vec{\zeta})$). By elementarity, we can assume that $r\in N,$ thus $q\leq r.$ It follows that $r\in G_{\mathbb{P}^*}$ and also $r^*\in G_{\mathbb{P}^*}.$ Therefore, $(\rho, \vec{\eta}) \sim^* (\chi, \vec{\zeta})$ (resp. $(\rho, \vec{\eta})<^*(\chi, \vec{\zeta})$). 


Conversely, suppose that $(\rho, \vec{\eta}) \sim^* (\chi, \vec{\zeta})$ (resp. $(\rho, \vec{\eta})<^*(\chi, \vec{\zeta})$), then there exists a trivial extension $r$ of $p$ and a stem $t\in G_X$ such that if $r^*$ is the condition obtained from $r$ by replacing the stem of $r$ with $t,$ then $r^*$ forces $t_\rho(\vec{\eta}) \sim^* t_\chi(\vec{\zeta})$ (resp. $t_\rho(\vec{\eta})<t_\chi(\vec{\zeta})$). By elementarity of $N,$ we can assume that $r\in N,$ hence $q$ extends it. It follows that $r\in G_{\mathbb{P}^*}$ and also $r^*\in G_{\mathbb{P}^*}.$ Therefore, 
$t_\rho(\vec{\eta}) = t_\chi(\vec{\zeta})$ (resp. $t_\rho(\vec{\eta})< t_\chi(\vec{\zeta})$) as required.  

The same argument shows that the algebra operators are preserved by the bijection.
\end{proof}

That completes the proof that the Delta reflection holds at $\lambda^+$ (hence at $\aleph_{\omega^2+1}$) in the model $\bar{W}^{\mathbb{P}}.$

\section{Open questions}
We conclude this paper with some open questions: 

\begin{question}\label{que: indestructible delta}
Can the Delta reflection principle be indestructible under any $\aleph_{\omega^2+1}$-directed closed forcing?
\end{question}
A positive answer for this question will entail a much simpler proof for the main theorem of this paper, Theorem~\ref{thm: main theorem}. A problem with similar taste is:

In \cite{MagidorShelah}, Magidor and Shelah introduced also a global Delta reflection, denoted $\Delta_\kappa.$ $\Delta_\kappa$ corresponds to the property $\Delta_{\kappa, \lambda}$ for every regular 
$\lambda\geq \kappa.$ They show that it is consistent, relative to the existence of $\omega$ many supercompact cardinals, that the successor of the first fixed point of the aleph function satisfies this principle.
 
\begin{question}
Is it consistent (relative to the existence of large cardinals) that both  $\square(\lambda^+)$ and $\Delta_{\lambda^+}$ hold where $\lambda$ is the first fixed point of the aleph function? 
\end{question}

\section{Acknowledgments}
We would like to thank Menachem Magidor for many helpful discussions. We also thank the referee for carefully reading this paper and for his comments and corrections. This research was supported by the \emph{European Commission} under a \emph{Marie Curie Intra-European Fellowship} through the project $\#\ 624381$ (acronym LAPSCA).

\bibliographystyle{elsarticle-num}






\end{document}